\newtheorem{theorem}{Theorem}[section]
\newtheorem{proposition}[theorem]{Proposition}
\newtheorem{lemma}[theorem]{Lemma}
\newtheorem{corollary}[theorem]{Corollary}
\theoremstyle{definition}
\newtheorem{remark}[theorem]{Remark}
\newcommand{\oddanick}{\ensuremath{T^{2n+1}(p^{r})}} 
\newcommand{\anick}{\ensuremath{T^{2n+1}(2^{r})}} 
\newcommand{\anickplus}{\ensuremath{T^{2n+1}(2^{r+1})}} 
\newcommand{\curly}{\ensuremath{S^{2n+1}\{\underline{2}^{r}\}}} 
\newcommand{\curlyplus}{\ensuremath{S^{2n+1}\{\underline{2}^{r+1}\}}} 
\newcommand{\jkm}{\ensuremath{J_{k-1}(S^{2n})}}
\newcommand{\jk}{\ensuremath{J_{k}(S^{2n})}} 
\newcommand{\moore}{\ensuremath{P^{2n+1}(2^{r})}}
\newcommand{\hlgy}[1]{\ensuremath{H_{*}(#1)}} 
\newcommand{\rhlgy}[1]{\ensuremath{\widetilde{H}_{*}(#1)}} 
\newcounter{bean} 
\newenvironment{letterlist}{\begin{list}{\rm ({\alph{bean}})} 
      {\usecounter{bean}\setlength{\rightmargin}{\leftmargin}}}
      {\end{list}}  
\newcommand{\namedright}[3]{\ensuremath{#1\stackrel{#2}
 {\longrightarrow}#3}}
\newcommand{\nameddright}[5]{\ensuremath{#1\stackrel{#2}
 {\longrightarrow}#3\stackrel{#4}{\longrightarrow}#5}} 
\newcommand{\namedddright}[7]{\ensuremath{#1\stackrel{#2}
 {\longrightarrow}#3\stackrel{#4}{\longrightarrow}#5
  \stackrel{#6}{\longrightarrow}#7}} 
\newcommand{\larrow}{\relbar\!\!\relbar\!\!\rightarrow} 
\newcommand{\llarrow}{\relbar\!\!\relbar\!\!\larrow} 
\newcommand{\lllarrow}{\relbar\!\!\relbar\!\!\llarrow} 
\newcommand{\lnamedright}[3]{\ensuremath{#1\stackrel{#2}
 {\larrow}#3}}
\newcommand{\lnameddright}[5]{\ensuremath{#1\stackrel{#2}
 {\larrow}#3\stackrel{#4}{\larrow}#5}}
\newcommand{\llnameddright}[5]{\ensuremath{#1\stackrel{#2}
 {\llarrow}#3\stackrel{#4}{\llarrow}#5}} 
\newcommand{\lllnamedright}[3]{\ensuremath{#1\stackrel{#2}
 {\lllarrow}#3}}
\newcommand{\lllnameddright}[5]{\ensuremath{#1\stackrel{#2}
 {\lllarrow}#3\stackrel{#4}{\lllarrow}#5}}
\newcommand{\qqed}{\hfill\Box}
\newcommand{\zmodtwo}{\ensuremath{\mathbf{Z}/2\mathbf{Z}}}
\begin{document} 


\title{$2$-Primary Anick fibrations} 
\author{Stephen D. Theriault}
\address{Department of Mathematical Sciences, University
         of Aberdeen, Aberdeen AB24 3UE, United Kingdom}
\email{s.theriault@maths.abdn.ac.uk}

\subjclass[2000]{Primary 55P35, Secondary 55Q40.}
\keywords{Anick's fibration, double suspension, power map, exponent}


\begin{abstract} 
Cohen conjectured that for $r\geq 2$ there is a space \anick\ 
and a homotopy fibration sequence 
\(\namedddright{\Omega^{2} S^{2n+1}}{\varphi_{r}}{S^{2n-1}}{} 
    {\anick}{}{\Omega S^{2n+1}}\) 
with the property that the composition 
\(\nameddright{\Omega^{2} S^{2n+1}}{\varphi_{r}}{S^{2n-1}}{E^{2}} 
     {\Omega^{2} S^{2n+1}}\) 
is homotopic to the $2^{r}$-power map. We positively resolve 
this conjecture when $r\geq 3$. Several preliminary results are 
also proved which are of interest in their own right. 
\end{abstract} 

\maketitle

\section{Introduction} 
\label{sec:intro} 

Let $p$ be a prime and $r$ a non-negative integer. Localize 
spaces and maps at $p$. Let 
\(E^{2}\colon\namedright{S^{2n-1}}{}{\Omega^{2} S^{2n+1}}\) 
be the double suspension. When $p\geq 5$ and $r\geq 1$,  
Anick~\cite{A} proved the existence of a space \oddanick\ and a 
homotopy fibration sequence 
\begin{equation} 
  \label{anickfib} 
  \namedddright{\Omega^{2} S^{2n+1}}{\varphi_{r}}{S^{2n-1}}{} 
    {\oddanick}{}{\Omega S^{2n+1}}  
\end{equation} 
with the property that $E^{2}\circ\varphi_{r}$ is homotopic to 
the $p^{r}$-power map on $\Omega^{2} S^{2n+1}$. The proof of this 
was long and complex. A much simpler proof was given  
in~\cite{GT} for $p\geq 3$ and $r\geq 1$ by using a new method 
for constructing certain extensions. A variation of this  
method was used in~\cite{T} to reproduce Cohen, Moore, and 
Neisendorfer's result~\cite{CMN1,CMN2} that $p^{n}$ annihilates  
the $p$-torsion in $\pi_{\ast}(S^{2n+1})$ when $p$ is odd. 

The new method raised the possibility of producing $2$-primary 
Anick fibrations. Such fibrations were conjectured to exist by 
Cohen if $r\geq 2$. When $r=1$ the fibrations cannot exist for 
reasons related to the non-existence of elements of Hopf invariant 
one. In this paper, we positively resolve Cohen's conjecture 
when $r\geq 3$. The approach is a modification of that in~\cite{T}. 
Specifically, we prove the following. 
 
\begin{theorem} 
   \label{main} 
   Let $r\geq 3$. There is a space \anick\ and a homotopy fibration 
   sequence 
   \[\namedddright{\Omega^{2} S^{2n+1}}{\varphi_{r}}{S^{2n-1}}{} 
     {\anick}{}{\Omega S^{2n+1}}\] 
   with the property that the composition 
   \(\nameddright{\Omega^{2} S^{2n+1}}{\varphi_{r}}{S^{2n-1}}{E^{2}} 
        {\Omega^{2} S^{2n+1}}\) 
   is homotopic to the $2^{r}$-power map. 
\end{theorem}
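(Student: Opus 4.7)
The plan is to follow the framework of \cite{T}, in which the odd-primary Anick fibration is produced via an inductive extension argument, and to modify that framework to handle the prime $2$. The target is \anick\ together with a homotopy fibration
\[
\namedright{S^{2n-1}}{}{\anick}\longrightarrow\Omega S^{2n+1}
\]
whose connecting map $\Omega^{2}S^{2n+1}\to S^{2n-1}$ will serve as $\varphi_{r}$.

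First, I would construct \anick\ as the mapping telescope of a sequence of spaces $\W{k}$ indexed by the James filtration \jk\ of $\Omega S^{2n+1}$. The initial stage would be built from the mod $2^{r}$ Moore space \moore, and the step from $\W{k-1}$ to $\W{k}$ would be effected by solving an extension problem enlarging a principal fibration over \jkm\ to one over \jk. Passing to the colimit produces the desired fibration, from which $\varphi_{r}$ is extracted as the connecting map.

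Second, I would verify that $E^{2}\circ\varphi_{r}$ is homotopic to the $2^{r}$-power map by a mod $2^{r}$ homological comparison, using the construction of $\varphi_{r}$ as a connecting map together with the known action of the $2^{r}$-power map on $H_{*}(\Omega^{2} S^{2n+1};\mathbf{Z}/2^{r}\mathbf{Z})$ and the injectivity properties of $E^{2}$ in the relevant range. In the odd-primary analogue this step is essentially formal once the fibration is in place, and I expect the same here after accounting for $2$-primary peculiarities of the Bockstein spectral sequence of $\Omega^{2}S^{2n+1}$.

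The main obstacle will be the inductive extension step producing $\W{k}$ from $\W{k-1}$. At odd primes the relevant obstructions vanish for dimensional or Whitehead-product reasons; at the prime $2$, secondary phenomena related to the Hopf invariant create genuine obstructions that rule out the case $r=1$ altogether and plausibly complicate $r=2$. The hypothesis $r\geq 3$ should be used precisely to supply enough $2$-divisibility so that the obstructing Whitehead products and Bockstein classes are trivial uniformly in $k$, thereby allowing the iterated extension to go through and the fibration to be assembled.
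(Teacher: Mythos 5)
Your outline is in the spirit of the paper's method (both ultimately run the extension machinery of \cite{T} and \cite{GT} along skeletal filtrations), but as it stands there are concrete gaps. First, you never identify the targets of the extension problems or why the obstructions die, and this is where all the real work lies. The paper does not build \anick\ directly as a telescope over the James filtration; it first constructs an intermediate fibration \(\nameddright{S^{2n-1}}{f}{Y}{g}{\Omega W_{2n}}\) with $t\circ f\simeq E^{2}$, where $Y$ is the fiber of a carefully chosen lift $S$ of $\Omega H$ to $\Omega^{2}S^{4n+1}\{2\}$, and then produces two successive lifts of a power map through $t$ and then $f$. Making this work requires two nontrivial preliminary results that your sketch has no substitute for: the ``delooped Richter'' property of $S$ (Theorem~\ref{improvedR}), which is what yields the fibration for $Y$, and the fact that the $4^{th}$-power map on $\Omega W_{2n}$ is null homotopic (Theorem~\ref{Wnpower}, itself resting on Theorem~\ref{4factor}), which is exactly the hypothesis needed to apply the extension lemma of \cite{T}/\cite{GT} with target $\Omega W_{2n}$. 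Second, your account of where $r\geq 3$ enters is not accurate: the divisibility/Samelson-product arguments only need $r\geq 2$. The genuine $2$-primary obstruction is that the starting square of the odd-primary induction, $S\circ(E^{2}\cdot 1)\simeq S\circ\pi_{2}$, fails already in mod~$2$ homology (the square of the $(2n-1)$-dimensional class maps to a nonzero Bockstein in $H_{*}(\Omega^{2}S^{4n+1}\{2\})$), so one must precompose with a degree~$2$ map; since this happens once in each of the two lifting steps, the construction at level $2^{r}$, $r\geq 2$, only produces $T^{2n+1}(2^{r+1})$ — hence the restriction $r\geq 3$ and the open case $r=2$. Your proposed mechanism (more $2$-divisibility killing Whitehead products uniformly in $k$) would wrongly suggest $r\geq 2$ suffices.

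Third, the final verification cannot be done the way you propose. The $2^{r}$-power map on $\Omega^{2}S^{2n+1}$ is invisible to mod~$2$ (or mod~$2^{r}$) homology, so no homological comparison can show $E^{2}\circ\varphi_{r}\simeq 2^{r}$; a connecting map of some fibration over $\Omega S^{2n+1}$ need not compose with $E^{2}$ to a power map at all. In the paper this property is built into the construction: $\ell_{1}$ is by definition a lift of $\Omega^{2}\underline{2}^{r+1}\simeq 2^{r+1}$ through $t$, $\varphi_{r}=\ell_{2}$ lifts $\ell_{1}$ through $f$, and $t\circ f\simeq E^{2}$, so the required homotopy follows by juxtaposing the defining pullback squares rather than by any after-the-fact calculation. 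To repair your proposal you would need to reorganize it so that $\varphi_{r}$ is manufactured as such a lift, which is essentially forced back onto the two-step structure of the paper.
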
 

The $r=2$ case remains open; it falls outside the present approach because 
we proceed in two steps, each of which involves introducing an extra 
power of $2$ to annihilate certain obstructions. 

In order to apply the approach in~\cite{T}, two preliminary 
results need to be proved, each of which is somewhat involved. 
The first is given in Theorem~\ref{improvedR}; to state it requires 
requires several definitions. The $EHP$-sequence is a homotopy 
fibration sequence
\(\namedddright{\Omega^{2} S^{2n+1}}{P}{S^{n}}{E}{\Omega S^{n+1}}
     {H}{\Omega S^{2n+1}}\)
where $H$ is the second James-Hopf invariant, $E$ is the
suspension, and $P$ is the connecting map for the fibration.
This fibration forms the basis for the $EHP$-spectral sequence
which calculates the homotopy groups of spheres. Let
\(\underline{2}:\namedright{S^{2n+1}}{}{S^{2n+1}}\)
be the map of degree~$2$, and let
\(2:\namedright{\Omega S^{2n+1}}{}{\Omega S^{2n+1}}\)
be the second-power map. Let $\Omega S^{2n+1}\{2\}$ be the
homotopy fiber of the second-power map, and let $D^{2n+1}$ be the
homotopy fiber of the difference $\Omega\underline{2}-2$.
In~\cite[\S 4]{C} it is shown that the map
\(\namedright{\Omega^{2} S^{2n+1}}{\Omega H}{\Omega^{2} S^{4n+1}}\)
has order~$2$, implying that $\Omega H$ lifts to a map
\(S:\namedright{\Omega^{2} S^{2n+1}}{}{\Omega^{2} S^{4n+1}\{2\}}\).
The same argument in~\cite[\S 4]{C} shows that the composite
\(\llnameddright{\Omega^{2} S^{2n}}{\Omega H}{\Omega^{2} S^{4n-1}}
    {\Omega^{2}\underline{2}-2}{\Omega^{2} S^{4n-1}}\)
is null homotopic, implying that in this case $\Omega H$ lifts to
a map
\(S^{\prime}:\namedright{\Omega^{2} S^{2n}}{}{\Omega D^{4n-1}}\).
We prove the following.

\begin{theorem}
   \label{improvedR}
   There are choices of the lifts $S$ and $S^{\prime}$ for which
   there are homotopy commutative diagrams
   \[\diagram
        \Omega S^{2n}\rto^-{h}\dto^{\Omega E}
            & \Omega S^{4n-1}\rto^-{\Omega E^{2}}
            & \Omega^{3} S^{4n+1}\dto
        & & \Omega S^{2n-1}\rto^-{h}\dto^{\Omega E}
            & \Omega S^{4n-3}\rto^-{\Omega E^{2}}
            & \Omega^{3} S^{4n-1}\dto \\
         \Omega^{2} S^{2n+1}\rrto^-{S}
            & & \Omega^{2} S^{4n+1}\{2\}
        & & \Omega^{2} S^{2n}\rrto^-{S^{\prime}}
            & & \Omega D^{4n-1}
     \enddiagram\]
   where the maps labelled $h$ are Hopf invariants in the sense
   that there in each case there is a homotopy fibration
   \(\nameddright{S^{n}}{E}{\Omega S^{n+1}}{h}{\Omega S^{2n+1}}\).
\end{theorem}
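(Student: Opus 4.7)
The strategy is, for each square in the statement, to construct the lift of $\Omega H$ that realises the claimed commutativity, rather than to verify it for an arbitrary pre-chosen lift. I focus on the $S$ diagram; the $S^{\prime}$ diagram is handled by an identical argument after replacing $\Omega^{2} S^{4n+1}\{2\}$, its connecting map, and the associated $2$-local James fibration by the analogous data for $\Omega D^{4n-1}$.

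First I would check the outer square commutes after composing with the fibration projection $p\colon\Omega^{2} S^{4n+1}\{2\}\to\Omega^{2} S^{4n+1}$. Along the right-and-down route this is automatic, because $p$ composed with the connecting map $\delta\colon\Omega^{3} S^{4n+1}\to\Omega^{2} S^{4n+1}\{2\}$ is null. Along the down-and-right route it amounts to showing $\Omega H\circ\Omega E\simeq\ast$, which follows by observing that $H\circ E\colon S^{2n}\to\Omega S^{4n+1}$ is adjoint to a map $S^{2n+1}\to S^{4n+1}$, null for connectivity reasons.

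Now fix any initial lift $S_{0}$ of $\Omega H$. The previous paragraph shows that the difference $S_{0}\circ\Omega E-\delta\circ\Omega E^{2}\circ h\colon\Omega S^{2n}\to\Omega^{2} S^{4n+1}\{2\}$ vanishes after composition with $p$, hence lifts through $\delta$ to a class $\epsilon\colon\Omega S^{2n}\to\Omega^{3} S^{4n+1}$. Because the set of lifts of $\Omega H$ is a torsor for $\delta_{\ast}[\Omega^{2} S^{2n+1},\Omega^{3} S^{4n+1}]$, it suffices to produce an extension $\tilde{\epsilon}\colon\Omega^{2} S^{2n+1}\to\Omega^{3} S^{4n+1}$ of $\epsilon$ along $\Omega E$; the corrected lift $S=S_{0}-\delta\circ\tilde{\epsilon}$ then still lifts $\Omega H$ and, by a direct computation using the torsor structure, satisfies $S\circ\Omega E\simeq\delta\circ\Omega E^{2}\circ h$.

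The extension problem is governed by the $2$-local James fibration sequence $\lnameddright{\Omega S^{2n}}{\Omega E}{\Omega^{2} S^{2n+1}}{\Omega H}{\Omega^{2} S^{4n+1}}$, in which the obstruction to extending $\epsilon$ along $\Omega E$ is the composite $\epsilon\circ\partial$, where $\partial\colon\Omega^{3} S^{4n+1}\to\Omega S^{2n}$ is the connecting map (essentially a loop of the classical $P$-map in the $EHP$-sequence, i.e.\ a Samelson product). I expect this vanishing to be the main obstacle. The approach would be to identify $\partial$ explicitly via the $EHP$-sequence, unpack $\epsilon$ using the definitions of $S_{0}$ and of $\delta\circ\Omega E^{2}\circ h$, and reduce the composite to one that can be seen null---for instance by absorbing it into a stably trivial composite of known $EHP$ classes, or by dimensional collapse in a sphere of low dimension. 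The analogous analysis for the fibration $\lnameddright{\Omega S^{2n-1}}{\Omega E}{\Omega^{2} S^{2n}}{\Omega H}{\Omega^{2} S^{4n-1}}$ then furnishes $S^{\prime}$.
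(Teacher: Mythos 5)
Your opening reduction is fine as far as it goes, but it only disposes of the trivial part of the statement: commutativity after composing with the projection $\Omega^{2}S^{4n+1}\{2\}\to\Omega^{2}S^{4n+1}$ is immediate from $H\circ E\simeq\ast$, and any lift of $\Omega H$ whatsoever has this property. The entire content of the theorem is the existence of a lift for which the square commutes \emph{before} projecting -- this is exactly the delooping of Richter's theorem that the paper is after -- and in your plan that content is concentrated in the unproved step of extending $\epsilon\colon\Omega S^{2n}\to\Omega^{3}S^{4n+1}$ over $\Omega E$, which you explicitly flag as "the main obstacle" and for which you offer only hopes ("absorbing it into a stably trivial composite", "dimensional collapse"). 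That is a genuine gap, not a deferrable detail: no amount of torsor bookkeeping in the fibration $\Omega^{3}S^{4n+1}\to\Omega^{2}S^{4n+1}\{2\}\to\Omega^{2}S^{4n+1}$ can supply the geometric input needed here. Two further points make the reduction itself shakier than you claim. First, vanishing of $\epsilon\circ\Omega P$ is necessary but not sufficient for extending along $\Omega E$; since the target is a loop space, the actual obstruction sits in maps out of the cofiber of $\Omega E$ (via the Barratt--Puppe sequence of $\Omega E$), so even granting the vanishing you expect, the extension does not follow. Second, you treat $h$ as fixed in advance (presumably the James--Hopf invariant), whereas the theorem only asserts the existence of \emph{some} map $h$ whose fiber sequence is $S^{n}\to\Omega S^{n+1}\to\Omega S^{2n+1}$; fixing $h=H$ makes your target statement potentially stronger than what is proved, and harder.

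For comparison, the paper never corrects an arbitrary lift. It constructs $S$, $S^{\prime}$ and $h$ simultaneously from concrete geometric data: Richter's diagram relating $H$ to the reduced diagonal of $\Omega S^{n+1}$ (whose vertical fiber is $\Omega^{2}S^{4n+1}\{2\}$ or $D^{4n-1}$ according to parity), naturality of the fiber $F(X)$ of the reduced diagonal applied to $E\colon S^{n}\to\Omega S^{n+1}$, Milgram's projective-plane factorization of $\overline{\Delta}$, and Harper's space-level form of the Peterson--Stein formulas. These identify the composite along the top of the assembled diagram with $\Omega E^{2}\circ h$, where $h$ is shown by a homology and Serre spectral sequence argument to have fiber $S^{n-1}$; the lift $S$ is then \emph{defined} as the bottom composite $\Omega f\circ\Omega a\circ E$. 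If you want to salvage your outline, the extension step would have to be replaced by (some equivalent of) this machinery; citing Richter's looped result does not suffice, since the theorem is strictly stronger than its looped consequence.
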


Taking homotopy fibers vertically in the two diagrams in 
Theorem~\ref{improvedR} we obtain the following.

\begin{corollary}
   \label{Richter}
   There are homotopy commutative diagrams
   \[\diagram
         \Omega^{3} S^{4n+1}\rrdouble\dto^{P}
            & & \Omega^{3} S^{4n+1}\dto^{2}
         & & \Omega^{3} S^{4n-1}\rrdouble\dto^{P}
            & & \Omega^{3} S^{4n-1}\dto^{\Omega^{3}\underline{2}-2} \\
         \Omega S^{2n}\rto^-{h} & \Omega S^{4n-1}\rto^-{\Omega E^{2}}
            & \Omega^{3} S^{4n+1}
         & & \Omega S^{2n-1}\rto^-{h}
            & \Omega S^{4n-3}\rto^-{\Omega E^{2}}
            & \Omega^{3} S^{4n-1}.
     \enddiagram\]
   $\qqed$
\end{corollary}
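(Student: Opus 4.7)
The plan is to realize each square of Theorem~\ref{improvedR} as the top of a morphism of three-term fibration sequences whose base map is the identity, and then to invoke the standard fact that for such a morphism the induced map on the fiber of the fiber inclusion is the loop of the base map — in our case $\Omega(\mathrm{id})=\mathrm{id}$.

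For the first diagram, I would use that by construction $S$ is a lift of $\Omega H$ through $i\colon\Omega^{2} S^{4n+1}\{2\}\to\Omega^{2} S^{4n+1}$, so appending the row with left vertical $\Omega H$, right vertical $i$, and bottom map the identity on $\Omega^{2} S^{4n+1}$ produces a morphism of the three-term fibration sequences
\[\Omega S^{2n}\xrightarrow{\Omega E}\Omega^{2} S^{2n+1}\xrightarrow{\Omega H}\Omega^{2} S^{4n+1}\]
(the looped $EHP$-fibration) and
\[\Omega^{3} S^{4n+1}\xrightarrow{\delta}\Omega^{2} S^{4n+1}\{2\}\xrightarrow{i}\Omega^{2} S^{4n+1}\]
(the defining $2$-power fibration extended one step to the left), whose base map is the identity. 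The fiber of the fiber inclusion $\Omega E$ is $\Omega^{3} S^{4n+1}$ with inclusion the connecting map $P$, and the fiber of the fiber inclusion $\delta$ is $\Omega^{3} S^{4n+1}$ with inclusion the $2$-power map. Taking these fibers vertically and applying the rotation principle, the induced map between the two copies of $\Omega^{3} S^{4n+1}\simeq\Omega(\Omega^{2} S^{4n+1})$ is $\Omega(\mathrm{id})=\mathrm{id}$, yielding the first diagram.

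The second diagram is proved identically using that $S^{\prime}$ is a lift of $\Omega H$ through $\Omega D^{4n-1}\to\Omega^{2} S^{4n-1}$. The right-hand fiber is $\Omega^{3} S^{4n-1}$ with inclusion $\Omega^{3}\underline{2}-2$, obtained by extending the looped fibration $\Omega D^{4n-1}\to\Omega^{2} S^{4n-1}\xrightarrow{\Omega^{2}\underline{2}-2}\Omega^{2} S^{4n-1}$ one step further to the left. The one non-obvious ingredient in either case is the identification of the induced map on fibers as the identity, which is precisely the content of the rotation principle for morphisms of fibration sequences over a common base; granted that, both diagrams drop out immediately.
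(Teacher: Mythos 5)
Your proposal is correct and is essentially the paper's own argument: the paper obtains the corollary precisely by taking homotopy fibers vertically in the two diagrams of Theorem~\ref{improvedR}, which is your morphism-of-fibration-sequences (rotation) argument, using that $S$ and $S^{\prime}$ are lifts of $\Omega H$ so that the base map is the identity. You merely make explicit what the paper leaves implicit, namely the identification of the vertical fibers (with inclusions $P$, the $2$-power map, and $\Omega^{3}\underline{2}-2$) and of the induced map on those fibers as $\Omega(\mathrm{id})=\mathrm{id}$.
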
 

Corollary~\ref{Richter} was proved by Richter~\cite{R}, who also showed 
that the map $h$ could be taken to be the second James-Hopf invariant $H$. 
Theorem~\ref{improvedR} can therefore be viewed as showing that there is 
an advantageous choice of the lift $S$ which lets us ``deloop" Richter's 
result.  

An earlier version of the left diagram in Corollary~\ref{Richter}
was proved by Selick~\cite{S} which involved a factorization of the
$2^{nd}$-power map on $\Omega^{4} S^{2n+1}$ through $\Omega^{2} E^{2}$.
This was used to show that $2^{\frac{3}{2}n+\epsilon}$ annihilates
the $2$-torsion in $\pi_{\ast}(S^{2n+1})$, where $\epsilon=0$ if $n$
is even and $\epsilon=1$ if $n$ is odd. This is the best
known upper bound on the $2$-primary homotopy exponent of spheres. The
right diagram in Corollary~\ref{Richter} is related to the very
delicate problem of determining whether the difference
$d=\Omega\underline{2}-2$ on $\Omega S^{2n+1}$ is null homotopic
after some number of loops, which would imply that the degree $2$
map induces multiplication by $2$ on homotopy groups. It is
known~\cite[\S 11]{C} that $d$ is null homotopic if and only if
$n\in\{0,1,3\}$, which are cases related to the existence of an element
of Hopf invariant one, and $\Omega d$ is null homotopic if and
only if there exists an element of Arf invariant one
in $\pi_{4n+1}(S^{2n+1})$. On the other hand, in~\cite{CJ} it
is shown that for certain values of $n$ the difference $\Omega^{k} d$
is essential for large values of $k$. Possibly properties of the
map $S^{\prime}$ in Theorem~\ref{improvedR} will help give more
information about $\Omega^{2} d$. 

The second preliminary result concerns the fiber $W_{n}$ of the 
double suspension 
\(\namedright{S^{2n-1}}{E^{2}}{\Omega^{2} S^{2n+1}}\). 
In what follows we will need to know that the $4^{th}$-power map 
on $\Omega W_{2n}$ is null homotopic. We show this is the case 
for all $\Omega W_{n}$. This uses a factorization of the $4^{th}$-power 
map on $\Omega^{2} S^{2n+1}$ through $E^{2}$ which is of interest 
in its own right. 

\begin{theorem}
   \label{4factor}
   There is a homotopy commutative diagram
   \[\diagram
         \Omega^{2} S^{2n+1}\rto\dto^{4} & S^{2n-1}\dto^{E^{2}} \\
         \Omega^{2} S^{2n+1}\rdouble & \Omega^{2} S^{2n+1}.
     \enddiagram\]
\end{theorem}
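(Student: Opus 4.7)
The plan is to write $4\simeq 2\circ 2$ and lift each factor of $2$ through one stage of the double suspension $E^{2}\simeq\Omega E\circ E$, using the looped and unlooped $EHP$-fibrations to measure the obstructions.

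First, since Cohen showed that $\Omega H\colon\Omega^{2}S^{2n+1}\to\Omega^{2}S^{4n+1}$ has order~$2$, and $\Omega H$ is a loop map hence an $H$-map, $\Omega H\circ 2\simeq 2\circ\Omega H\simeq\ast$. The looped $EHP$-fibration \(\nameddright{\Omega S^{2n}}{\Omega E}{\Omega^{2}S^{2n+1}}{\Omega H}{\Omega^{2}S^{4n+1}}\) then yields a lift $g\colon\Omega^{2}S^{2n+1}\to\Omega S^{2n}$ with $\Omega E\circ g\simeq 2$. Because $\Omega E$ is an $H$-map, $4\simeq 2\circ\Omega E\circ g\simeq\Omega E\circ(g+g)$, where $g+g$ is the $H$-space square computed in $\Omega S^{2n}$; thus $g+g\colon\Omega^{2}S^{2n+1}\to\Omega S^{2n}$ is a lift of~$4$ through $\Omega E$.

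Next, I would further lift $g+g$ through $E\colon S^{2n-1}\to\Omega S^{2n}$. By the $EHP$-fibration \(\nameddright{S^{2n-1}}{E}{\Omega S^{2n}}{H}{\Omega S^{4n-1}}\), this reduces to establishing that the obstruction $H\circ(g+g)\colon\Omega^{2}S^{2n+1}\to\Omega S^{4n-1}$ is null homotopic. Given such a null homotopy, the resulting lift $f\colon\Omega^{2}S^{2n+1}\to S^{2n-1}$ satisfies $E^{2}\circ f\simeq\Omega E\circ E\circ f\simeq\Omega E\circ(g+g)\simeq 4$, as required.

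The main obstacle is establishing $H\circ(g+g)\simeq\ast$. Since $H$ is not an $H$-map, James's product formula gives $H(g+g)\simeq 2H(g)+w(g,g)$ for a correction term $w(g,g)$ of Whitehead-product type. To kill both summands I would exploit (a)~the freedom in the lift $g$ --- any two lifts of $2$ through $\Omega E$ differ by composition with $\Omega P\colon\Omega^{3}S^{4n+1}\to\Omega S^{2n}$ --- and (b)~Theorem~\ref{improvedR}, whose identity $S\circ\Omega E\simeq\iota\circ\Omega E^{2}\circ H$ (with $\iota\colon\Omega^{3}S^{4n+1}\to\Omega^{2}S^{4n+1}\{2\}$ the connecting map) pins down $H\circ g$ in terms of the lift~$S$. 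The $2$-primary structure of the Whitehead-square term combined with this identification should allow $g$ to be chosen so that both $2H(g)$ and $w(g,g)$ vanish, completing the factorization.
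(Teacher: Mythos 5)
Your first step coincides with the paper's: since $\Omega H$ has order $2$, the $2^{nd}$-power map on $\Omega^{2}S^{2n+1}$ lifts through $\Omega E$ to a map $g\colon\Omega^{2}S^{2n+1}\to\Omega S^{2n}$. But your second step--lifting $2\circ g\simeq g+g$ through $E\colon S^{2n-1}\to\Omega S^{2n}$ in the $EHP$-fibration--contains the genuine gap: the entire content of the theorem is concentrated in the unproved claim that $H\circ(g+g)$ is null homotopic, and the proposal offers only the hope that it ``should'' be killable. There is no reason for $2H(g)$ to vanish (maps into $\Omega S^{4n-1}$ need not have order $2$), the cross term in the James distributivity formula is exactly the kind of Whitehead-square/Hopf-invariant-one obstruction that makes $2$-primary arguments delicate (as the paper's own discussion of $\Omega\underline{2}-2$ and Arf invariant one indicates), and the relation extracted from Theorem~\ref{improvedR}, namely $S\circ 2\simeq\iota\circ\Omega E^{2}\circ h\circ g$, constrains $h\circ g$ only up to the fiber of $\iota$ and in terms of the unspecified Hopf invariant $h$; it does not yield the required vanishing, nor is it clear that varying $g$ by elements factoring through $\Omega P$ can simultaneously kill both summands. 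Note also that while the theorem guarantees \emph{some} lift of $4$ through $\Omega E$ composing trivially with $H$ (namely $E\circ f$ for the eventual $f$), nothing forces such a lift to have the form $g+g$ for a lift $g$ of $2$, so your reduction is not reversible.

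The paper avoids this second lifting problem altogether. It constructs (Lemma~\ref{Alemma}) a space $A$ with $\hlgy{A}\cong\Lambda(x_{2n-1},y_{2n})$, built from the $(4n-1)$-skeleton of a fiber associated to $\Omega S^{2n+1}\{2\}$, together with a homology inclusion $\overline{E}\colon A\to\Omega S^{2n+1}\{2\}$. This yields (Lemma~\ref{tgtinclusion}) a fibration $S^{2n-1}\to A\to S^{2n}$ mapping to $\Omega^{2}S^{2n+1}\to\Omega S^{2n+1}\{2\}\to\Omega S^{2n+1}$ with fiber map $E^{2}$ and base map $E$, whence (Corollary~\ref{2partial}) its connecting map $\partial_{A}\colon\Omega S^{2n}\to S^{2n-1}$ satisfies $E^{2}\circ\partial_{A}\simeq 2\circ\Omega E$. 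Composing, $E^{2}\circ(\partial_{A}\circ g)\simeq 2\circ\Omega E\circ g\simeq 4$, so the second factor of $2$ is absorbed by $\partial_{A}$ rather than produced by a second lift through the $EHP$-fibration. If you want to salvage your approach, you would need to supply an honest proof that $g$ can be chosen with $H\circ(g+g)\simeq\ast$; as it stands, that step is precisely where the theorem lives.
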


A weaker version of Theorem~\ref{4factor} with one more loop was proved 
in~\cite[\S 5]{C}. 

In~\cite{B-R} it was shown that the $4^{th}$-power
map on $\Omega^{2} W_{n}$ is null homotopic, implying that
$\mbox{$4\cdot\pi_{\ast}(W_{n})=0$}$. This is best possible in the sense
that the $2^{nd}$-power map on $\Omega^{2} W_{n}$ is known to
be essential. We improve this by removing one loop.

\begin{theorem}
   \label{Wnpower}
   The $4^{th}$-power map on $\Omega W_{n}$ is null homotopic.
\end{theorem}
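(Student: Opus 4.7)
The plan is to use Theorem~\ref{4factor} to construct an explicit null-homotopy of the $4^{th}$-power map on $\Omega W_n$, by working in the standard model of $\Omega W_n$ as the homotopy fiber of $\Omega E^{2}$, in which the canonical null-homotopy of $\Omega E^{2}\circ\Omega i$ is multiplicative with respect to the H-space structures.

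By Theorem~\ref{4factor} there is $s\colon\Omega^{2} S^{2n+1}\to S^{2n-1}$ with $E^{2}\circ s\simeq 4$. The identity $E^{2}\circ s\circ E^{2}\simeq 4\circ E^{2}\simeq E^{2}\circ\underline{4}$, where $\underline{4}$ is the degree-$4$ self-map of $S^{2n-1}$, together with the injectivity of $E^{2}_{\ast}\colon\pi_{2n-1}(S^{2n-1})\to\pi_{2n-1}(\Omega^{2} S^{2n+1})$, forces $s\circ E^{2}\simeq\underline{4}$. Looping, and using that the $k^{th}$-power in the loop space of a sphere coincides with $\Omega$ of the degree-$k$ self-map, we obtain
\[
\Omega s\circ\Omega E^{2}\simeq 4\text{ on }\Omega S^{2n-1},\qquad\Omega E^{2}\circ\Omega s\simeq 4\text{ on }\Omega^{3} S^{2n+1},
\]
the second being Theorem~\ref{4factor} looped once.

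Work in the standard model $\Omega W_n=\{(w,\theta)\mid w\in\Omega S^{2n-1},\ \theta\colon I\to\Omega^{3} S^{2n+1},\ \theta(0)=\Omega E^{2}(w),\ \theta(1)=\ast\}$, with H-structure defined pointwise, $(w_1,\theta_1)\cdot(w_2,\theta_2)=(w_1\cdot w_2,\theta_1\cdot\theta_2)$. Then $\Omega i(w,\theta)=w$, and the canonical null-homotopy of $\Omega E^{2}\circ\Omega i$ is $H((w,\theta),t)=\theta(t)$, which is multiplicative: $H(\xi_1\cdot\xi_2,t)=H(\xi_1,t)\cdot H(\xi_2,t)$, so in particular $H(\xi^{4},t)=H(\xi,t)^{4}$.

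The composite $\Omega i\circ 4=4\circ\Omega i=\Omega s\circ\Omega E^{2}\circ\Omega i$ is null-homotopic via $\Omega s\circ H$, so $4\colon\Omega W_n\to\Omega W_n$ lifts through the fiber $\Omega^{4} S^{2n+1}$ of $\Omega i$ to a map $\mu$ satisfying $\Omega\delta\circ\mu\simeq 4$. Under the standard identification $\mathrm{hofib}(\Omega i)\simeq\Omega^{4} S^{2n+1}$ sending $(\xi',\eta)$ to the loop $t\mapsto\Omega E^{2}(\eta(t))\cdot H(\xi',t)^{-1}$ formed pointwise using the H-group structure of $\Omega^{3} S^{2n+1}$, the lift evaluates to
\[
\mu(\xi)(t) \;=\; \Omega E^{2}(\Omega s(H(\xi,t)))\cdot H(\xi^{4},t)^{-1} \;=\; H(\xi,t)^{4}\cdot\bigl(H(\xi,t)^{4}\bigr)^{-1} \;=\; \ast,
\]
using $\Omega E^{2}\circ\Omega s\simeq 4$ and multiplicativity of $H$. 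Hence $\mu$ is null-homotopic and $4\simeq\Omega\delta\circ\mu\simeq 0$ on $\Omega W_n$. This parallels the strategy of \cite{B-R} for $\Omega^{2} W_n$, with the stronger input of Theorem~\ref{4factor} allowing one fewer loop. The main place where care is required is the verification that the chosen model identifies the connecting map $\Omega\delta$ with the projection $(\xi',\eta)\mapsto\xi'$ under the iso; this is a direct check.
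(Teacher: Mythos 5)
Your argument breaks down at its first step, and the failure is exactly the $2$-primary subtlety this paper is organized around. From $E^{2}\circ s\simeq 4$ you correctly deduce $s\circ E^{2}\simeq\underline{4}$ as self-maps of $S^{2n-1}$, but looping this gives $\Omega s\circ\Omega E^{2}\simeq\Omega\underline{4}$, \emph{not} the $4^{th}$-power map on $\Omega S^{2n-1}$. The assertion that ``the $k^{th}$-power map on the loop space of a sphere coincides with $\Omega$ of the degree-$k$ self-map'' is false at the prime $2$: the difference $\Omega\underline{2}-2$ on $\Omega S^{2m+1}$ is essential except when $m\in\{0,1,3\}$ (see the Introduction and~\cite{C}), and the relation between degree maps and power maps is only available after looping \emph{twice}, and even then only in the form $\Omega^{2}\underline{2}^{r+1}\simeq 2^{r+1}$ of Theorem~\ref{loop2facts}(b). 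Consequently your key identity $4\circ\Omega i\simeq\Omega s\circ\Omega E^{2}\circ\Omega i\simeq\ast$ is unjustified; what you actually obtain is $\Omega\underline{4}\circ\Omega i\simeq\ast$, which does not produce a lift of the power map $4$ to the fiber of $\Omega i$. This is precisely why the paper never argues with the bare power map in the $W_{2n-1}$ case: it introduces the map $\psi$ covering $\Omega^{2}\underline{2}$, works with $\Omega\psi-2$, and uses Theorem~\ref{4factor} only through a fiberwise obstruction-theory comparison to get $\psi\circ\psi\simeq\ast$; in the $W_{2n}$ case it instead uses the factorization of the degree-$2$ map on $S^{4n-1}$ through $E^{2}$ (Theorem~\ref{BWn}(b)), which exists only for that parity, together with Corollary~\ref{Richter}.

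Even granting a lift $\mu$, the closing computation is not rigorous: you treat the homotopy $\Omega E^{2}\circ\Omega s\simeq 4$ as a strict pointwise identity inside your model of the fiber of $\Omega i$. A lift through the fiber is determined by a \emph{choice} of null homotopy, and replacing strict equalities by homotopies changes $\mu$ by classes in $[\Omega W_{n},\Omega^{4}S^{2n+1}]$ that your formula does not control; nothing forces these correction terms to vanish, and your displayed evaluation of $\mu(\xi)(t)$ silently assumes $\Omega E^{2}(\Omega s(\,\cdot\,))$ equals the pointwise fourth power on the nose. If an evaluation of this sort sufficed, the extra loop needed in~\cite{B-R} and the two-case argument of Section~5 (the parity split, Gray's $BW_{n}$ fibrations, Richter's factorization, two obstruction-theoretic comparisons of fibration morphisms, and the separate proof that $4\Omega\psi\simeq\ast$) would be unnecessary. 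So the proposal, as it stands, has a genuine gap at both the lifting step and the final null-homotopy step.
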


Since $W_{n}$ is homotopy equivalent to $\Omega BW_{n}$, it is
an $H$-space. It would be interesting to know whether the
$4^{th}$-power map on $W_{n}$ is null homotopic.
\medskip 

This paper is organized as follows. The first preliminary result, 
Theorem~\ref{improvedR}, is proved in Sections~\ref{sec:reduceddiag} 
and~\ref{sec:projplane}. The precursor to the second preliminary result,  
Theorem~\ref{4factor}, is proved in Section~\ref{sec:4factor}, while  
the second preliminary result itself, Theorem~\ref{Wnpower}, is proved 
in Section~\ref{sec:expWn}. With these in hand, Theorem~\ref{main} 
is outlined in Section~\ref{sec:proof} with some details deferred to 
Section~\ref{sec:deferredproofs}.

\section{The fiber of the reduced diagonal}
\label{sec:reduceddiag}

The context in which Theorem~\ref{improvedR} is proved involves
the fiber of the reduced diagonal, applied naturally to the map
\(\namedright{S^{n}}{E}{\Omega S^{n+1}}\).
Let
\(\Delta:\namedright{X}{}{X\times X}\)
be the diagonal map. Composing with the quotient map
\(\namedright{X\times X}{}{X\wedge X}\)
gives the reduced diagonal
\(\overline{\Delta}:\namedright{X}{}{X\wedge X}\).
The map $\overline{\Delta}$ is natural for maps
\(\namedright{X}{}{Y}\)
and induces a natural homotopy fibration sequence
\[\namedddright{\Omega (X\wedge X)}{}{F(X)}{}{X}{\overline{\Delta}}
    {X\wedge X}.\]
Here, the fiber $F(X)$ is defined specifically as the topological
pullback of $\overline{\Delta}$ and the evaluation map
\(\namedright{P(X\wedge X)}{}{X\wedge X}\),
where $P(X\wedge X)$ is the path space on $X\wedge X$.

In the special case when $X=\Omega S^{n+1}$, Richter~\cite{R} showed
that there is a homotopy commutative diagram
\begin{equation}
  \label{Richterdiag}
  \diagram
      \Omega S^{n+1}\rto^-{H}\dto^{\overline{\Delta}}
          & \Omega S^{2n+1}\dto^{1+\Omega(-\underline{1}^{n})} \\
      \Omega S^{n+1}\wedge\Omega S^{n+1}\rto^-{\alpha}
          & \Omega S^{2n+1}
  \enddiagram
\end{equation}
for some map $\alpha$. An explicit choice of the map $\alpha$ was
described in~\cite{R}, but we will not need this. We only care that
there is some choice of $\alpha$ that makes the diagram homotopy
commute. Let $G^{2n+1}$ be the homotopy fiber of
$1+\Omega(-\underline{1}^{n})$. Observe that when $n=2m$ then
$1+\Omega(-\underline{1}^{n})=2$ and so $G^{4m+1}\simeq\Omega
S^{4m+1}\{2\}$. When $n=2m-1$ then
$1+\Omega(-\underline{1}^{n})=1+\Omega(-\underline{1})$ 
and by~\cite[\S 4]{C} the latter is homotopic to 
$\Omega\underline{2}-2$. Thus $G^{4m-1}$ is homotopy equivalent 
to the space $D^{4m-1}$ defined in the Introduction. Taking
vertical homotopy fibers in~(\ref{Richterdiag}) gives a homotopy
commutative diagram
\begin{equation}
  \label{Richterdiag2}
  \diagram
      F(\Omega S^{n+1})\rto^-{f}\dto & G^{2n+1}\dto \\
      \Omega S^{n+1}\rto^-{H} & \Omega S^{2n+1}
  \enddiagram
\end{equation}
for some map $f$ of fibers.

Now consider precomposing with the suspension
\(\namedright{S^{n}}{E}{\Omega S^{n+1}}\).
The naturality of the reduced diagonal gives a homotopy commutative
diagram
\[\diagram
     F(S^{n})\rto^-{F(E)}\dto & F(\Omega S^{n+1})\dto \\
     S^{2n}\rto^-{E} & \Omega S^{2n+1}.
  \enddiagram\]
Juxtapose this diagram with~(\ref{Richterdiag2}). Since $H\circ E$ is
null homotopic, so is the composite
\(\namedddright{F(S^{n})}{F(E)}{F(\Omega S^{2n+1})}{f}{G^{2n+1}}
   {}{\Omega S^{2n+1}}\).
Thus $f\circ F(E)$ lifts to the homotopy fiber of
\(\namedright{G^{2n+1}}{}{\Omega S^{2n+1}}\),
giving a homotopy commutative diagram
\begin{equation}
  \label{Fdgrm}
  \diagram
     F(S^{n})\rto^-{g}\dto^{F(E)} & \Omega^{2} S^{2n+1}\dto \\
     F(\Omega S^{2n+1})\rto^-{f} & G^{2n+1}
  \enddiagram
\end{equation}
for some map $g$. We wish to analyze~(\ref{Fdgrm}) after looping.

Since $S^{n}$ is a co-$H$ space, its reduced diagonal is null
homotopic, and so $F(S^{n})\simeq S^{n}\times\Omega(S^{n}\wedge S^{n})$.
Let $g_{1}$ and $g_{2}$ be the restrictions of
\(\namedright{F(S^{2n})}{g}{\Omega^{2} S^{2n+1}}\)
to $S^{n}$ and $\Omega(S^{n}\wedge S^{n})$ respectively. After looping,
multiplicativity implies that $\Omega g$ is determined by the
restrictions $\Omega g_{1}$ and $\Omega g_{2}$. Since $G^{2n+1}$ is
$(2n-2)$-connected, $g_{1}$ is null homotopic. Thus $\Omega g$ factors
through $\Omega g_{2}$, which we state as follows.

\begin{lemma}
   \label{omegagfact}
   There is a homotopy commutative diagram
   \[\diagram
        \Omega F(S^{n})\rto^-{\Omega\pi_{2}}\ddouble
           & \Omega^{2}(S^{n}\wedge S^{n})\dto^{\Omega g_{2}} \\
        \Omega F(S^{n})\rto^-{\Omega g} & \Omega^{3} S^{2n+1}.
     \enddiagram\]
   where $\pi_{2}$ is the projection.~$\qqed$
\end{lemma}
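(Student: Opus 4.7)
The plan is to exploit the $H$-space structure on the target $\Omega^{3} S^{2n+1}$ together with the product splitting of $F(S^{n})$ just described. Since $S^{n}$ is a co-$H$ space, the reduced diagonal is null homotopic and the fibration $\Omega(S^{n}\wedge S^{n})\to F(S^{n})\to S^{n}$ splits, giving $F(S^{n})\simeq S^{n}\times\Omega(S^{n}\wedge S^{n})$. Let $i_{1},i_{2}$ denote the inclusions and $\pi_{1},\pi_{2}$ the projections of the two factors, so that $g_{1}=g\circ i_{1}$ and $g_{2}=g\circ i_{2}$ by definition.

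Looping the splitting yields an equivalence of $H$-spaces $\Omega F(S^{n})\simeq \Omega S^{n}\times\Omega^{2}(S^{n}\wedge S^{n})$, since $\Omega$ sends a product to the product of loop spaces with the pointwise multiplication. Crucially, $\Omega g$ is automatically an $H$-map into the double loop space $\Omega^{3} S^{2n+1}$, because it is literally the loops of a map and so respects the standard loop multiplications. For any $H$-map $\phi\colon A\times B\to H$ out of a product of $H$-spaces, the identity $(a,b)=(a,\ast)\cdot(\ast,b)$ in $A\times B$ together with the $H$-map property gives a homotopy
\[
  \phi\ \simeq\ \mu_{H}\circ\bigl(\phi\circ i_{A}\circ\pi_{A},\ \phi\circ i_{B}\circ\pi_{B}\bigr).
\]
Applied to $\Omega g$ this produces a homotopy
\[
  \Omega g\ \simeq\ \mu\circ\bigl(\Omega g_{1}\circ\Omega\pi_{1},\ \Omega g_{2}\circ\Omega\pi_{2}\bigr),
\]
where $\mu$ denotes the loop multiplication on $\Omega^{3} S^{2n+1}$.

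The conclusion then follows by noting that $\Omega g_{1}$ is null homotopic: this is immediate from the nullity of $g_{1}\colon S^{n}\to\Omega^{2} S^{2n+1}$ already recorded above (coming from the connectivity of $G^{2n+1}$ and the resulting dimension bound on $S^{n}$). With $\Omega g_{1}\simeq\ast$, the first factor in the displayed multiplication is trivial, leaving $\Omega g\simeq\Omega g_{2}\circ\Omega\pi_{2}$, which is precisely the homotopy commutativity asserted by the square. The only point requiring any care is the multiplicative decomposition of $\Omega g$; this rests entirely on the fact that $\Omega g$ is the loops of a map, so no additional hypothesis on $g$ itself is required.
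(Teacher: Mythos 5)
Your argument is correct and is essentially the paper's own proof: the paper also splits $F(S^{n})\simeq S^{n}\times\Omega(S^{n}\wedge S^{n})$ using the vanishing of the reduced diagonal on a co-$H$ space, invokes multiplicativity after looping to see that $\Omega g$ is determined by $\Omega g_{1}$ and $\Omega g_{2}$, and kills $g_{1}$ by the $(2n-2)$-connectivity of the target. You have merely made the standard $H$-map decomposition $\Omega g\simeq\mu\circ(\Omega g_{1}\circ\Omega\pi_{1},\Omega g_{2}\circ\Omega\pi_{2})$ explicit, which is fine.
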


\section{Projective planes and the proof of Theorem~\ref{improvedR}}
\label{sec:projplane}

We begin with two general results in Theorems~\ref{Milgram}
and~\ref{Harper}. The first is due to Milgram~\cite{M}. To state 
it we first need to define several spaces and maps. Let $X$ be a 
simply-connected space. Let
\(\mu^{\ast}:\namedright{\Sigma\Omega X\wedge\Omega X}{}
  {\Sigma\Omega X}\)
be the canonical Hopf construction. This induces a homotopy cofibration
\(\namedddright{\Sigma\Omega X\wedge\Omega X}{\mu^{\ast}}
    {\Sigma\Omega X}{i}{PP(\Omega X)}{\rho}
    {\Sigma^{2}\Omega X\wedge\Omega X}\)
which defines the space $PP(\Omega X)$ and the maps $i$ and $\rho$.
The space $PP(\Omega X)$ is called the projective plane of $\Omega X$.  
To analyze $PP(\Omega X)$ more closely Milgram gave an explicit 
point-set model. Using this he was able to prove a factorization 
of the reduced diagonal  
\(\namedright{PP(\Omega X)}{\overline{\Delta}}  
   {PP(\Omega X)\wedge PP(\Omega X)}\), 
which is stated in Theorem~\ref{Milgram}. The model comes 
equipped with a canonical extension of the evaluation map 
\(ev:\namedright{\Sigma\Omega X}{}{X}\)
to a map
\(\overline{ev}:\namedright{PP(\Omega X)}{}{X}\). 
Let 
\(\tau:\namedright{\Sigma A\wedge B}{}{A\wedge\Sigma B}\) 
be the map which switches the suspension coordinate. 

\begin{theorem} 
   \label{Milgram} 
   There is a factorization 
   \[\diagram 
        PP(\Omega X)\rto^{\rho}\dto^{\overline{\Delta}} 
           & \Sigma^{2}\Omega X\wedge\Omega X\rto^-{\Sigma\tau} 
           & \Sigma\Omega X\wedge\Sigma\Omega X\dto^{i\wedge i} \\ 
        PP(\Omega X)\wedge PP(\Omega X)\rrdouble 
           & & PP(\Omega X)\wedge PP(\Omega X). 
     \enddiagram\] 
   Moreover, this factorization is natural for maps 
   \(\namedright{X}{}{X^{\prime}}\). 
   $\qqed$ 
\end{theorem}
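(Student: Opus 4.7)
The plan is to work with Milgram's explicit point-set model for the projective plane $PP(\Omega X)$, in which $PP(\Omega X)$ appears as a CW-complex with bottom stratum $\Sigma\Omega X$ attached to a basepoint and a single top stratum obtained by coning off the Hopf construction $\mu^{\ast}$. In this model the defining cofibration identifies $\rho$ with the quotient map $PP(\Omega X)\to PP(\Omega X)/\Sigma\Omega X\simeq\Sigma^{2}\Omega X\wedge\Omega X$ and $i$ with the inclusion of the bottom stratum.

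First I would establish that $\overline{\Delta}\circ i$ is null homotopic. Since $\Sigma\Omega X$ is a co-$H$-space, its own reduced diagonal is null homotopic, and by naturality of the reduced diagonal $\overline{\Delta}\circ i$ factors as $(i\wedge i)\circ\overline{\Delta}_{\Sigma\Omega X}$. Choose a null homotopy of this composite and use it to deform $\overline{\Delta}$ to a map that collapses $\Sigma\Omega X$ to the basepoint. The deformed map factors through $\rho$, yielding a map $\psi:\Sigma^{2}\Omega X\wedge\Omega X\to PP(\Omega X)\wedge PP(\Omega X)$ with $\overline{\Delta}\simeq\psi\circ\rho$. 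This step is formal and uses only the cofiber structure of Milgram's model, not its internal combinatorics.

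The heart of the proof is the identification of $\psi$ with $(i\wedge i)\circ\Sigma\tau$, and here one really needs Milgram's explicit parameterization of the top stratum as $\Delta^{2}\times\Omega X\times\Omega X$ modulo face relations. The strategy is to write down the diagonal pointwise in these coordinates and track where the image lands under the cellular decomposition of $PP(\Omega X)\wedge PP(\Omega X)$. One expects — and verifies by direct computation — that the image of the top cell under $\overline{\Delta}$ sits in the subspace $\Sigma\Omega X\wedge\Sigma\Omega X\hookrightarrow PP(\Omega X)\wedge PP(\Omega X)$, namely in the image of $i\wedge i$; the coordinate swap $\Sigma\tau$ then arises because Milgram's model places both suspension coordinates of the top cell on the same factor $\Sigma^{2}\Omega X\wedge\Omega X$, whereas $\Sigma\Omega X\wedge\Sigma\Omega X$ distributes one suspension coordinate to each side.

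Naturality in $X$ is essentially automatic: every ingredient — the bar model for $PP(\Omega X)$, the Hopf construction $\mu^{\ast}$, the maps $i$, $\rho$, $\tau$, and the co-$H$ null homotopy of the reduced diagonal on $\Sigma\Omega X$ — is functorially determined by $X$, so the homotopies in the first two steps can be arranged compatibly with any map $X\to X^{\prime}$. The main obstacle is step three: producing \emph{some} factorization through $\rho$ is a soft consequence of the cofiber description, but pinning down the precise composite $(i\wedge i)\circ\Sigma\tau$ forces one to grind through the explicit face identifications in Milgram's point-set model, which is why this result does not admit a purely abstract proof and is attributed to Milgram.
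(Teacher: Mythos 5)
The paper offers no internal proof of this statement: it is quoted as Milgram's theorem and cited to~\cite{M}, so the only comparison available is with Milgram's own argument, which rests on his explicit point-set model and an explicit approximation of the diagonal. Measured against that, the soft half of your outline is fine: $\overline{\Delta}\circ i=(i\wedge i)\circ\overline{\Delta}_{\Sigma\Omega X}$ is null homotopic because $\Sigma\Omega X$ is a suspension, and the homotopy extension property for the cofibration $\Sigma\Omega X\rightarrow PP(\Omega X)$ then yields \emph{some} map $\psi\colon\Sigma^{2}\Omega X\wedge\Omega X\rightarrow PP(\Omega X)\wedge PP(\Omega X)$ with $\overline{\Delta}\simeq\psi\circ\rho$.

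The genuine gap is your third step, and it is not just an omitted computation: the verification you describe would fail as stated. A point $c$ of the top stratum is sent by the honest reduced diagonal to the class of $(c,c)$ in $PP(\Omega X)\wedge PP(\Omega X)$; the image of $\overline{\Delta}$ is the diagonal copy of $PP(\Omega X)$ and does \emph{not} lie in the image of $i\wedge i$, so tracking where the literal diagonal lands in the cell structure can never place the top cell inside $\Sigma\Omega X\wedge\Sigma\Omega X$. What Milgram actually supplies is an explicit filtration-preserving (cellular, Alexander--Whitney type) approximation to the diagonal in his model, together with the canonical natural homotopy from $\overline{\Delta}$ to it; it is that approximation, not $\overline{\Delta}$ itself, whose restriction to the top cell equals $(i\wedge i)\circ\Sigma\tau\circ\rho$. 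This explicit homotopy is also what your argument needs for the other two claims you treat as routine: first, the map $\psi$ produced by your soft step is not unique --- extensions over the cofiber coming from different null homotopies of $\overline{\Delta}\circ i$ differ by the coaction of $\Sigma(\Sigma\Omega X\wedge\Omega X)$ (equivalently by elements of $[\Sigma(\Sigma\Omega X),PP(\Omega X)\wedge PP(\Omega X)]$), so knowing where an image lies would not identify the homotopy class of $\psi$ in any case; and second, naturality of the factorization (which the paper later needs, via Remark~\ref{Harperremark}, in the form of compatible homotopies) is exactly naturality of that chosen homotopy, which Milgram's formulas give but an unspecified ``choose a null homotopy'' does not. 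So the skeleton of your plan is reasonable, but the entire mathematical content of the theorem --- the explicit natural approximation of the diagonal and the identification of its top-cell restriction --- is asserted rather than proved, and the shortcut you propose for it is incorrect.
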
 

The naturality of the reduced diagonal gives a commutative diagram 
\[\diagram 
    PP(\Omega X)\rto^-{\overline{\Delta}}\dto^{\overline{ev}} 
       & PP(\Omega X)\wedge PP(\Omega X)
            \dto^{\overline{ev}\wedge\overline{ev}} \\ 
    X\rto^-{\overline{\Delta}} & X\wedge X. 
  \enddiagram\] 
Combining this with the commutative diagram in Theorem~\ref{Milgram} 
and using the fact that $\overline{ev}\circ i\simeq ev$ immediately 
gives the following. 

\begin{corollary}
   \label{Milgramcor}
   There is a homotopy commutative diagram
   \[\diagram
        PP(\Omega X)\rto^-{\rho}\dto^{\overline{ev}}
           & \Sigma^{2}\Omega X\wedge\Omega X\rto^-{\Sigma\tau}
           & \Sigma\Omega X\wedge\Sigma\Omega X\dto^{ev\wedge ev} \\
        X\rrto^-{\overline{\Delta}} & & X\wedge X.
     \enddiagram\]
   Moreover, this diagram is natural for maps 
   \(\namedright{X}{}{X^{\prime}}\). 
   $\qqed$ 
\end{corollary}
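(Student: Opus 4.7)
The plan is to obtain the diagram by pasting together three inputs already at hand: the naturality square for the reduced diagonal applied to $\overline{ev}$, the factorization of $\overline{\Delta}_{PP(\Omega X)}$ supplied by Theorem~\ref{Milgram}, and the compatibility relation $\overline{ev}\circ i\simeq ev$ built into Milgram's point-set model.

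First I would record the naturality square for $\overline{\Delta}$ applied to the map $\overline{ev}\colon PP(\Omega X)\to X$. This is precisely the square displayed just before the corollary in the text, and it commutes strictly because $\overline{\Delta}$ is a natural transformation from the identity functor to the smash-square functor on pointed spaces; its top edge is $\overline{\Delta}_{PP(\Omega X)}$ and its right vertical is $\overline{ev}\wedge\overline{ev}$. Next I would substitute Theorem~\ref{Milgram} into the top edge: the theorem exhibits $\overline{\Delta}_{PP(\Omega X)}$ as homotopic to the composite $(i\wedge i)\circ\Sigma\tau\circ\rho$. Chasing this around the naturality square rewrites $\overline{\Delta}_X\circ\overline{ev}$ as
\[
(\overline{ev}\wedge\overline{ev})\circ(i\wedge i)\circ\Sigma\tau\circ\rho \;=\; \bigl((\overline{ev}\circ i)\wedge(\overline{ev}\circ i)\bigr)\circ\Sigma\tau\circ\rho.
\]
Finally I would apply the identity $\overline{ev}\circ i\simeq ev$, which expresses the fact that $\overline{ev}$ was constructed as the canonical extension of the evaluation along the bottom-cell inclusion $i\colon\Sigma\Omega X\hookrightarrow PP(\Omega X)$. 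Substituting this homotopy collapses the right-hand side to $(ev\wedge ev)\circ\Sigma\tau\circ\rho$, which is exactly the composite along the top and right edges of the diagram in the statement; this produces the required homotopy commutativity.

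Naturality in $X$ then comes for free: the reduced diagonal is natural by definition, the Milgram factorization is natural by the final clause of Theorem~\ref{Milgram}, and $ev$, $\overline{ev}$, $i$, $\rho$ are all natural features of the functorial construction of $PP(\Omega X)$. There is no genuine obstacle here — the argument is a short diagram chase, and its only non-formal ingredient is Theorem~\ref{Milgram} itself, which has already been invoked as given.
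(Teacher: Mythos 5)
Your proposal is correct and is exactly the paper's argument: the naturality square for $\overline{\Delta}$ applied to $\overline{ev}$, combined with the factorization in Theorem~\ref{Milgram} and the relation $\overline{ev}\circ i\simeq ev$, with naturality following from the naturality clauses of those inputs. No issues.
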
 

Theorem~\ref{Milgram} was referred to by Richter~\cite{R} as
the basis of an alternative proof of his results, which we have
stated in Corollary~\ref{Richter}. No details were given, and it
may or may not be the case that the alternative proof went along
the lines presented here.

The second general result concerns Peterson and Stein's~\cite{PS} 
formulas for secondary cohomology operations, which was reformulated 
in terms of spaces by Harper~\cite[Ch. 3]{H}. Suppose there is a 
sequence of maps 
\(\namedddright{A}{s}{B}{}{E}{t}{D}\). 
Let $C$ and $M$ be the mapping cones of $s$ and $t$ 
respectively. Let $F$ be the topological fiber of $t$, that is, 
the pullback of $t$ and the path-evaluation map
\(\namedright{P(D)}{}{D}\).
Suppose there is a homotopy 
\(H:\namedright{B\times I}{}{D}\) 
between the composites 
\(\nameddright{B}{}{E}{t}{D}\) 
and 
\(\nameddright{B}{}{C}{}{D}\). 
Then the following holds.

\begin{theorem}
   \label{Harper}
   There is a homotopy commutative diagram
   \[\diagram
        A\rto\dto^{a} & B\rto\dto & C\rto\dto & \Sigma A\dto^{c} \\
        F\rto\dto^{b} & E\rto & D\rto & M \\
        \Omega M & & &
     \enddiagram\]
   where the map $c$ is homotopic to the adjoint of $b\circ a$.~$\qqed$
\end{theorem}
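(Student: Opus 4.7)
The plan is to build each of the maps $a$, $b$, $c$ from concrete cone and fiber models and then compare $c$ with the adjoint of $b \circ a$ by direct parametrization. Take $C = B \cup_{s} CA$ and $M = D \cup_{t} CE$ as strict mapping cones, model $F$ as pairs $(e, \gamma)$ with $e \in E$ and $\gamma$ a path in $D$ from $t(e)$ to the basepoint, and use the two-cone decomposition $\Sigma A = C^{-}A \cup_{A} C^{+}A$ of the suspension.

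First, restricting the given map $C \to D$ to the cone $CA \subset C$ and using the homotopy $H$, extract a null-homotopy $K \colon CA \to D$ of $tus$ with $K|_{A} = tus$ and apex mapping to the basepoint. Define $a \colon A \to F$ by $a(x) = (us(x),\, K(x,-))$, where $u$ denotes the unnamed map $B \to E$. Define $b \colon F \to \Omega M$ as the canonical comparison between the homotopy fiber of $t$ and the loop space of its cofiber: $(e, \gamma)$ is sent to the loop that traverses $\gamma$ backward inside $D \subset M$ and then moves along the cone fiber from $(e, 0)$ up to the cone point inside $CE \subset M$. Define $c \colon \Sigma A \to M$ by sending $C^{-}A$ to $D \subset M$ via $(x, \sigma) \mapsto K(x, -\sigma)$ and $C^{+}A$ to $CE \subset M$ via $(x, \sigma) \mapsto (us(x), \sigma)$; these agree along $A$ in $M$ because $us(x) \in E$ is identified with $tus(x) \in D$. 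The three squares of the diagram then commute up to homotopy essentially tautologically, with the middle square using $H$ and the outer two using only the built-in identifications of the cone and fiber.

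The substance of the theorem is the identification of $c$ with the adjoint of $b \circ a$. I would verify this by writing both maps $\Sigma A \to M$ as explicit paths in $M$ for each $x \in A$ and comparing. On the one hand, $b \circ a(x)$ is the loop $\omega_{x}$ whose first half traces $K(x, 1-2t)$ in $D$ and whose second half traces $(us(x), 2t-1)$ in $CE$, so its adjoint sends $[x, t] \in \Sigma A$ to $\omega_{x}(t)$. On the other hand, after reparametrizing the two-cone model of $\Sigma A$ by $t = (\sigma + 1)/2$, the map $c$ sends $[x, t]$ to precisely the same path. The main---and essentially only---obstacle is the bookkeeping of orientations and parameter directions of the two cones together with the reversal introduced by $H$; once the null-homotopy $K$ is used consistently in the constructions of $a$ and of $c$, the two maps agree on the nose rather than merely up to homotopy.
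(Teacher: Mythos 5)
The paper itself gives no proof of this statement --- it is quoted from Harper's book \cite[Ch.\ 3]{H} (the space-level Peterson--Stein formulas) --- so your direct construction is in any case a different route from the paper's citation, and your ingredients are the correct ones: strict mapping cones, the path-space model of $F$, the null homotopy $K$ on $CA$ assembled from $H$ and the restriction of the given map $C\to D$, the comparison map $b$ from the fibre of $t$ to the loops on its cofibre, and $c$ defined so that it is literally the adjoint of $b\circ a$. That last identification is indeed ``on the nose'' --- but it is so by construction, and it is not where the content of the theorem lies.

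The gap is the sentence asserting that the three squares commute ``essentially tautologically.'' The left square is strict and the middle square is the hypothesis $H$, but the right-hand square, $c\circ q\simeq \tilde{\imath}\circ v$ where $q\colon C\to C/B\cong\Sigma A$ is the collapse, $v\colon C\to D$ the given map and $\tilde{\imath}\colon D\to M$ the inclusion, is the whole theorem, and checking it requires producing a homotopy $C\times I\to M$ which extends over the cone $CA$ the null homotopy of $\tilde{\imath}\circ v|_B$ obtained from $H$ followed by sliding up the cone $CE$. Writing that homotopy down is exactly what pins down the orientation conventions you defer, and with the conventions you state it comes out wrong by a loop inversion: restricting such a homotopy to the cone on a point $x$ forces the factored map to send the suspension coordinate to the path that first descends $CE$ from the cone point to $(us(x),0)$ and then runs $H(s(x),\cdot)$ and $v(x,\cdot)$ forward, which is the \emph{reverse} of your loop $\overline{K(x,\cdot)}\ast(\mbox{cone path})$; equivalently your $c$ is the correct map precomposed with the suspension flip, i.e.\ its inverse in $[\Sigma A,M]$. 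A quick sanity check makes this concrete: take $B=E=\ast$, so $C=\Sigma A$, $q=\mathrm{id}$, $M=D$, $F=\Omega D$, and $H$ trivial; your $b$ becomes loop inversion, so the adjoint of $b\circ a$ is $(\tilde{\imath}\circ v)$ composed with the flip, and the right square fails whenever $[v]$ is not of order two in $[\Sigma A,D]$. The fix is a convention adjustment --- define $b(e,\gamma)$ by first descending the cone from the cone point to $(e,0)$ and then traversing $\gamma$, or reverse one cone parameter in $c$, or orient $C/B\cong\Sigma A$ the other way --- but you must commit to one consistent choice and then actually exhibit the square-filling homotopy over $CA$ (standard, continuous in $x$ by an explicit reparametrization, but not tautological).
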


\begin{remark}
\label{Harperremark} 
The construction in~\cite[Ch. 3]{H} is natural for a strictly commuting 
map of sequences 
\[\diagram 
    A\rto\dto & B\rto\dto^{b} & E\rto\dto & D\dto^{d} \\ 
    A^{\prime}\rto & B^{\prime}\rto & C^{\prime}\rto & D^{\prime} 
  \enddiagram\] 
which is compatible with the homotopies  
\(H:\namedright{B\times I}{}{D}\) 
and 
\(H^{\prime}:\namedright{B^{\prime}\times I}{}{D^{\prime}}\), 
that is, such that $d\circ H\simeq H^{\prime}\circ (b\times 1)$.  
\end{remark}

In our case, apply Theorem~\ref{Harper} to the homotopy
commutative diagram in Corollary~\ref{Milgramcor} to obtain a homotopy 
commutative diagram
\begin{equation}
  \label{PSdgrm}
  \diagram
     \Sigma\Omega X\rto\dto^{a}
         & PP(\Omega X)\rto^-{\partial}\dto^{\overline{ev}}
         & \Sigma^{2}\Omega X\wedge\Omega X
             \rrto^-{\Sigma\mu^{\ast}}\dto^{(ev\wedge ev)\circ\Sigma\tau}
         & & \Sigma^{2}\Omega X\dto^{c} \\
     F(X)\rto\dto^{b} & X\rto^-{\overline{\Delta}}
         & X\wedge X\rrto & & C(X) \\
     \Omega C(X) & & &
  \enddiagram
\end{equation}
where $C(X)$ is the mapping cone of $\overline{\Delta}$, $F(X)$ is
the topological fiber of $\overline{\Delta}$, and $c$ is homotopic to 
the adjoint of~$b\circ a$. Further, the naturality statements in 
Corollary~\ref{Milgramcor} and Remark~\ref{Harperremark} imply that 
this diagram is natural for maps 
\(\namedright{X}{}{X^{\prime}}\). 

We will use~(\ref{PSdgrm}) in two ways. First, the naturality 
property applied to the map
\(\namedright{S^{n}}{E}{\Omega S^{n+1}}\)
gives, in particular, a homotopy commutative diagram
\begin{equation}
  \label{sigmaomegadgrm}
  \diagram
       \Sigma\Omega S^{n}\rto^-{a}\dto^{\Sigma\Omega E}
         & F(S^{2n})\dto^{F(E)} \\
       \Sigma\Omega^{2} S^{n+1}\rto^-{a} & F(\Omega S^{n+1}).
    \enddiagram
\end{equation} 
Combining this with material in Section~\ref{sec:reduceddiag}  
gives a homotopy commutative diagram
\begin{equation}
  \label{fulldgrm}
  \diagram
     & & & \Omega^{2}(S^{n}\wedge S^{n})\dto^{\Omega g_{2}} \\
     \Omega S^{n}\rto^{E}\dto^{\Omega E}
         & \Omega\Sigma\Omega S^{n}
              \rto^-{\Omega a}\dto^{\Omega\Sigma\Omega E}
         & \Omega F(S^{n})\urto^-{\Omega\pi_{2}}
             \rto^-{\Omega g}\dto^{\Omega F(E)}
         & \Omega^{3} S^{2n+1}\dto \\
     \Omega^{2} S^{n+1}\rto^-{E}
         & \Omega\Sigma\Omega^{2} S^{n+1}\rto^-{\Omega a}
         & \Omega F(\Omega S^{n+1})\rto^-{\Omega f}
         & \Omega G.
  \enddiagram
\end{equation}
Here, the left square homotopy commutes by the naturality of the
suspension, the middle square is due to~(\ref{sigmaomegadgrm}),
the right square to~(\ref{Fdgrm}), and the upper triangle to
Lemma~\ref{omegagfact}.

The second way in which we use~(\ref{PSdgrm}) is show that the
composite $\Omega g_{2}\circ\Omega\pi_{2}\circ\Omega a\circ E$
along the top in~(\ref{fulldgrm}) is homotopic to a composite
\(\nameddright{\Omega S^{n}}{h}{\Omega S^{2n-1}}{\Omega E^{2}}
     {\Omega^{3} S^{2n+1}}\)
for a map $h$ which is an epimorphism in homology. Doing so 
will let us conclude Theorem~\ref{improvedR}.

We begin with some general observations. The adjoint of a map
\(s:\namedright{\Sigma A}{}{B}\)
is homotopic to the composite
\(\nameddright{A}{E}{\Omega\Sigma A}{\Omega s}{\Omega B}\).
Taking the adjoint of the right square in~(\ref{PSdgrm})
gives a homotopy commutative diagram
\begin{equation}
  \label{adjointdgrm}
  \diagram
     \Sigma\Omega X\wedge\Omega X\rto^-{E}
        & \Omega(\Sigma^{2}\Omega X\wedge\Omega X)
             \rrto^-{\Omega\Sigma\mu^{\ast}}
             \dto^{\Omega(ev\wedge ev)\circ\Omega\Sigma\tau}
        & & \Omega\Sigma^{2}\Omega X\dto^{\Omega c} \\
     & \Omega(X\wedge X)\rrto & & \Omega C(X).
  \enddiagram
\end{equation}
Consider both directions around the diagram. The naturality of $E$
implies that the upper direction around~(\ref{adjointdgrm}) is homotopic
to $\Omega c\circ E\circ\mu^{\ast}$. But $\Omega c\circ E$
is homotopic to the adjoint of $c$, and we know from~(\ref{PSdgrm})
that the adjoint of $c$ is also homotopic to $b\circ a$. Therefore
the upper direction around~(\ref{adjointdgrm}) is homotopic to the 
composite 
\begin{equation} 
  \label{adjointeqn1} 
  \namedddright{\Sigma\Omega X\wedge\Omega X}{\mu^{\ast}}  
        {\Sigma\Omega X}{a}{F(X)}{b}{\Omega C(X)}. 
\end{equation}
On the other hand, in Lemma~\ref{PSevfactor} we show that in the 
special case $X=\Sigma Y$ the lower direction around~(\ref{adjointdgrm}) 
is homotopic to a composite 
\begin{equation} 
  \label{adjointeqn2} 
  \namedddright{\Sigma\Omega\Sigma Y\wedge\Omega\Sigma Y}{\epsilon} 
      {Y\wedge\Sigma Y}{E}{\Omega(\Sigma Y\wedge\Sigma Y)}{} 
      {\Omega C(\Sigma Y)} 
\end{equation} 
for some map $\epsilon$. 

\begin{lemma}
   \label{PSevfactor}
   Suppose $X$ is a suspension, $X=\Sigma Y$. Then
   there is a homotopy commutative diagram
   \[\diagram 
        \Sigma\Omega\Sigma Y\wedge\Omega\Sigma Y 
             \rto^-{\tau}\dto^{\epsilon} 
           & \Omega\Sigma Y\wedge\Sigma\Omega\Sigma Y\rto^-{E} 
           & \Omega(\Sigma\Omega\Sigma Y\wedge\Sigma\Omega\Sigma Y)
             \dto^{\Omega(ev\wedge ev)} \\
         Y\wedge\Sigma Y\rrto^-{E}
           & & \Omega(\Sigma Y\wedge\Sigma Y).
     \enddiagram\]
\end{lemma}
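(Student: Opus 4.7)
The plan is to pass to adjoints. Writing $\phi=\Omega(\mathit{ev}\wedge\mathit{ev})\circ E\circ\tau$ for the upper composite and $\widetilde\phi$ for its adjoint under the $(\Sigma,\Omega)$-adjunction, the commutativity of the diagram is equivalent to showing $\widetilde\phi\simeq\Sigma\epsilon$ for some $\epsilon$, since the adjoint of $E\circ\epsilon$ is $\Sigma\epsilon$ by the triangle identity $\mathit{ev}\circ\Sigma E=\mathrm{id}$. Applying the same triangle identity together with the naturality of $\mathit{ev}$ to the three-step expression for $\phi$, the adjoint $\widetilde\phi$ simplifies to
\[
\widetilde\phi\;\simeq\;(\mathit{ev}\wedge\mathit{ev})\circ\Sigma\tau\colon\Sigma(\Sigma\Omega\Sigma Y\wedge\Omega\Sigma Y)\longrightarrow\Sigma Y\wedge\Sigma Y.
\]

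Exploiting the hypothesis that $X=\Sigma Y$ is a suspension in order to shift the lone suspension coordinate across via $\tau$, I would take as the candidate
\[
\epsilon\colon\Sigma\Omega\Sigma Y\wedge\Omega\Sigma Y\xrightarrow{\mathit{ev}\wedge 1}\Sigma Y\wedge\Omega\Sigma Y\xrightarrow{\tau}Y\wedge\Sigma\Omega\Sigma Y\xrightarrow{1\wedge\mathit{ev}}Y\wedge\Sigma Y.
\]
Intuitively, $\epsilon$ evaluates each $\Sigma\Omega\Sigma Y$-factor while using a single $\tau$-swap to transfer the suspension coordinate from the first factor to the second, so that the output lands in $Y\wedge\Sigma Y$ rather than in the asymmetric space $\Sigma Y\wedge Y$.

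The main technical obstacle is the verification that $\Sigma\epsilon\simeq(\mathit{ev}\wedge\mathit{ev})\circ\Sigma\tau$. Both sides ultimately apply $\mathit{ev}$ to each of the two $\Sigma\Omega\Sigma Y$-factors of $\Sigma\Omega\Sigma Y\wedge\Sigma\Omega\Sigma Y$, but arrive at $\Sigma Y\wedge\Sigma Y$ via different intermediate arrangements of suspension coordinates: for $\Sigma\epsilon$ the outer suspension coordinate is preserved through both evaluations via $\tau$, while for $(\mathit{ev}\wedge\mathit{ev})\circ\Sigma\tau$ it is first transported by $\Sigma\tau$ into one of the smash factors and then absorbed into an $\mathit{ev}$. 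Matching the two requires careful bookkeeping using functoriality of $\Sigma$, the naturality of $\tau$ with respect to $\mathit{ev}\wedge 1$ and $1\wedge\mathit{ev}$, and the canonical equivalences $\Sigma(A\wedge B)\simeq\Sigma A\wedge B\simeq A\wedge\Sigma B$; once these identifications are made consistent, the two orders of operations agree up to homotopy.
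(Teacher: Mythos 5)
Your reduction to adjoints is correct, and your candidate $\epsilon=(1\wedge ev)\circ\tau\circ(ev\wedge 1)$ does in fact work, but be aware that this is a genuinely different route from the paper's. The paper never writes $\epsilon$ down explicitly: it splits $\Omega\Sigma Y\wedge\Sigma\Omega\Sigma Y$ into four wedge summands using the Hopf-construction splitting $\Sigma\Omega\Sigma Y\simeq\Sigma Y\vee(\Sigma Y\wedge\Omega\Sigma Y)$ (this is where the hypothesis $X=\Sigma Y$ enters, via the section $\Sigma E$ of $ev$), shows that $\Omega(ev\wedge ev)\circ E$ annihilates the three summands that factor through the Hopf construction $\mu^{\ast}$ because $ev\circ\mu^{\ast}\simeq\ast$ (Ganea's fibration), and takes $\epsilon$ to be the resulting projection onto the remaining summand $Y\wedge\Sigma Y$. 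Your approach buys an explicit formula for $\epsilon$ (which also makes the later assertion that $h_{\ast}$ is an epimorphism transparent) and in fact proves more: the identity $(q\wedge q)\circ\Sigma\tau\simeq\Sigma\bigl((1\wedge q)\circ\tau\circ(q\wedge 1)\bigr)$ holds for an arbitrary map $q\colon\Sigma B\to\Sigma Y$, with no appeal to Ganea's fibration or to any James-type splitting; the paper's argument gets the statement with less coordinate bookkeeping but only an implicit $\epsilon$.

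One caution: the justification you offer for the key homotopy is not adequate as stated. Naturality of $\tau$ applies only to maps of the unsuspended variables, and $ev\colon\Sigma\Omega\Sigma Y\to\Sigma Y$ is not the suspension of any map $\Omega\Sigma Y\to Y$, so it cannot be slid across the coordinate shuffle by naturality; that is precisely where the content lies. What does close the argument is a transposition-of-coordinates computation. Write both adjointed composites as $(1_{\Sigma Y}\wedge ev)$ preceded by maps $\alpha,\beta\colon\Sigma(\Sigma\Omega\Sigma Y\wedge\Omega\Sigma Y)\to\Sigma Y\wedge\Sigma\Omega\Sigma Y$, where $\alpha(t\wedge s\wedge u\wedge v)=ev(t\wedge u)\wedge(s\wedge v)$ and $\beta(t\wedge s\wedge u\wedge v)=(t\wedge y)\wedge(t'\wedge v)$ with $t'\wedge y=ev(s\wedge u)=u(s)$; then $(1\wedge ev)\circ\alpha$ is the adjoint $(ev\wedge ev)\circ\Sigma\tau$ and $(1\wedge ev)\circ\beta$ is $\Sigma\epsilon$. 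One checks on the nose that $\beta=\gamma\circ\alpha\circ\sigma$, where $\sigma$ transposes the two suspension coordinates $t,s$ of the source and $\gamma$ transposes the suspension coordinates of the two smash factors of the target. Each transposition is homotopic to the reversal of a single suspension coordinate, $\alpha$ intertwines the reversal of $s$ with the reversal of the second target coordinate, and the two target reversals compose to a self-map homotopic to the identity (two reversals have total degree one on the $S^{2}$ they span); hence $\beta\simeq\alpha$, so $\Sigma\epsilon\simeq(ev\wedge ev)\circ\Sigma\tau$ and the square commutes. With that verification supplied in place of the appeal to naturality, your proof is complete.
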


\begin{proof} 
First observe that the Hopf construction can be used to obtain 
a homotopy equivalence
\[\lllnameddright{\Sigma Y\vee(\Sigma Y\wedge\Omega\Sigma Y)}
     {1\vee(\Sigma E\wedge 1)}
     {\Sigma Y\vee(\Sigma\Omega\Sigma Y\wedge\Omega\Sigma Y)}
     {\Sigma E+\mu^{\ast}}{\Sigma\Omega\Sigma Y}.\] 
This can be applied to $\Omega\Sigma Y\wedge\Sigma\Omega\Sigma Y$  
by decomposing the right smash factor, then switching the suspension 
coordinate to the left smash factor, and decomposing again. From this 
we obtain a homotopy equivalence   
\[e:\namedright{(Y\vee\Sigma Y)\vee 
    (Y\wedge\Omega\Sigma Y\wedge\Sigma Y)\vee 
    (Y\wedge\Sigma Y\wedge\Omega\Sigma Y)\vee 
    (Y\wedge\Omega\Sigma Y\wedge\Sigma Y\wedge\Omega\Sigma Y)} 
    {}{\Omega\Sigma Y\wedge\Sigma\Omega\Sigma Y}\] 
where the first wedge summand maps in by $E\wedge\Sigma E$, 
the second maps in through $\mu^{\ast}\wedge 1$, the third 
maps in through $1\wedge\mu^{\ast}$, and the fourth maps in 
by either $\mu^{\ast}\wedge 1$ or $1\wedge\mu^{\ast}$ (the ``$1$" 
here refers to the identity map on $\Omega\Sigma Y$). 
Thus to prove the Lemma it is equivalent to show that 
$\Omega(ev\wedge ev)\circ E\circ e$ is null homotopic when 
restricted to the second, third, and fourth wedge summands. 

We will use the homotopy fibration 
\(\nameddright{\Sigma\Omega\Sigma Y\wedge\Omega\Sigma Y}{\mu^{\ast}}
     {\Sigma\Omega\Sigma Y}{ev}{Y}\), 
due to Ganea~\cite{Ga}. The naturality of $E$ implies that the composite 
\[\lnamedright{\Omega\Sigma Y\vee 
    (\Sigma\Omega\Sigma Y\wedge\Omega\Sigma Y)}{1\wedge\mu^{\ast}} 
    {\Omega\Sigma Y\wedge\Sigma\Omega\Sigma Y} 
    \stackrel{E}{\longrightarrow} 
    \lllnamedright{\Omega(\Sigma\Omega\Sigma Y\wedge\Sigma\Omega\Sigma Y)} 
    {\Omega(ev\wedge ev)}{\Omega(\Sigma Y\wedge\Sigma Y)}\] 
is homotopic to 
$\Omega(ev\wedge ev)\circ\Omega\Sigma(1\wedge\mu^{\ast})\circ E$. 
But the latter is null homotopic as $ev\circ\mu^{\ast}$ is 
null homotopic. Thus $\Omega(ev\wedge ev)\circ E\circ e$ is null 
homotopic when restricted to the second and fourth wedge summands. 
Similarly, if the suspension coordinate is switched to the left 
smash factor, then $\Omega(ev\wedge ev)\circ E\circ(\mu^{\ast} 1)$ 
is null homotopic and so $\Omega(ev\wedge ev)\circ E\circ e$ is 
null homotopic when restricted to the third (and fourth) wedge summand. 
\end{proof}

Using the homotopies in~(\ref{adjointeqn1}) and~(\ref{adjointeqn2}) 
to replace the upper and lower directions in~(\ref{adjointdgrm}) 
when $X=\Sigma Y$, we obtain a homotopy commutative diagram
\begin{equation}
  \label{adjointdgrm2}
  \diagram
       \Sigma\Omega\Sigma Y\wedge\Omega\Sigma Y
             \rto^-{\mu^{\ast}}\dto^{\epsilon}
          & \Sigma\Omega\Sigma Y\rto^-{a} & F(\Sigma Y)\dto^{b} \\
       Y\wedge\Sigma Y\rto^-{E} & \Omega\Sigma Y\wedge\Sigma Y\rto
          & \Omega C(\Sigma Y).
    \enddiagram
\end{equation} 
Since $\Sigma Y$ is a suspension, the reduced diagonal
\(\namedright{\Sigma Y}{\overline{\Delta}}{\Sigma Y\wedge\Sigma Y}\)
is null homotopic. Therefore there are homotopy equivalences
$F(\Sigma Y)\simeq\Sigma Y\times\Omega(\Sigma Y\wedge\Sigma Y)$
and $C(\Sigma Y)\simeq\Sigma^{2} Y\vee(\Sigma Y\wedge\Sigma Y)$.
Further, the projection of $\Omega(\Sigma Y\wedge\Sigma Y)$ off
$F(\Sigma Y)$ can be chosen to be the composite
\(\nameddright{F(\Sigma Y)}{b}{\Omega C(\Sigma Y)}{\mbox{proj}}
     {\Omega(\Sigma Y\wedge\Sigma Y)}\).
Thus~(\ref{adjointdgrm2}) implies that there is a homotopy commutative 
diagram
\begin{equation}
  \label{adjointdgrm3}
  \diagram
       \Sigma\Omega\Sigma Y\wedge\Omega\Sigma Y
             \rto^-{\mu^{\ast}}\dto^{\epsilon}
          & \Sigma\Omega\Sigma Y\rto^-{a} & F(\Sigma Y)\dto^{\pi_{2}} \\
       Y\wedge\Sigma Y\rrto^-{E} & & \Omega(\Sigma Y\wedge\Sigma Y). 
    \enddiagram
\end{equation}

Now we return to the case of interest. Take $Y=S^{n-1}$
in~(\ref{adjointdgrm3}) and combine it with the factorization in
Lemma~\ref{omegagfact} to obtain a homotopy commutative diagram
\begin{equation}
  \label{Snevdgrm}
  \diagram
     \Omega(\Sigma\Omega S^{n}\wedge\Omega S^{n})
          \rto^-{\Omega\mu^{\ast}}\dto^{\Omega\epsilon}
        & \Omega\Sigma\Omega S^{n}\rto^-{\Omega a}
        & \Omega F(S^{n})\rto^-{\Omega g}\dto^{\Omega\pi_{2}}
        & \Omega^{3} S^{2n+1}\ddouble \\
     \Omega(S^{n-1}\wedge\Sigma S^{n-1})\rrto^-{\Omega E}
        & & \Omega^{2}(S^{n}\wedge S^{n})\rto^-{\Omega g_{2}}
        & \Omega^{3} S^{2n+1}.
  \enddiagram
\end{equation} 
Two adjustments can be made to~(\ref{Snevdgrm}).
First, since $g_{2}$ is degree one in $H_{2n-2}(\ )$, the
composite $g_{2}\circ E$ is homotopic to $E^{2}$.
Second, since the homotopy fibration
\(\nameddright{\Sigma\Omega S^{n}\wedge\Omega S^{n}}{\mu^{\ast}}
    {\Sigma\Omega S^{n}}{ev}{S^{n}}\)
has a section
\(i:\namedright{S^{n}}{}{\Sigma\Omega S^{n}}\)
given by including the bottom cell, there is a homotopy equivalence
\[\llnameddright{\Omega S^{n}\times
     \Omega(\Sigma\Omega S^{n}\wedge\Omega S^{n})}
     {\Omega i\times\Omega\mu^{\ast}}
     {\Omega\Sigma\Omega S^{n}\times\Omega\Sigma\Omega S^{n}}
     {\mu}{\Omega\Sigma\Omega S^{n}}\]
where $\mu$ is the loop multiplication. By connectivity, the
composite $a\circ g\circ i$ is null homotopic, so after looping,
$\Omega g\circ\Omega a$ is homotopic to the composite
\(\Omega\Sigma\Omega S^{n}\stackrel{\pi_{2}}{\longrightarrow}
     \lllnamedright{\Omega(\Sigma\Omega S^{n}\wedge\Omega S^{n})}
     {\Omega(g\circ a\circ\mu^{\ast})}{\Omega^{3} S^{2n+1}}\), 
where $\pi_{2}$ is the projection. Combining~(\ref{Snevdgrm})
and the two adjustments we have homotopies
\[\Omega g\circ\Omega a\simeq
      \Omega g\circ\Omega a\circ\Omega\mu^{\ast}\circ\pi_{2}\simeq
      \Omega g_{2}\circ\Omega E\circ\Omega\epsilon\circ\pi_{2}\simeq
      \Omega E^{2}\circ\Omega\epsilon\circ\pi_{2}.\]
That is, there is a homotopy commutative diagram
\begin{equation}
  \label{Snevdgrm2}
  \diagram
     \Omega\Sigma\Omega S^{n}\rto^-{\Omega a}\dto^{\pi_{2}}
        & \Omega F(S^{n})\rto^-{\Omega g}
        & \Omega^{3} S^{2n+1}\ddouble \\
     \Omega(\Sigma\Omega S^{n}\wedge\Omega S^{n})
           \rto^-{\Omega\epsilon}
        & \Omega S^{2n-1}\rto^-{\Omega E^{2}} & \Omega^{3} S^{2n+1}.
  \enddiagram
\end{equation}

Finally, precompose~(\ref{Snevdgrm}) with
\(\namedright{\Omega S^{2n}}{E}{\Omega\Sigma\Omega S^{n}}\).
Let $h$ be the composite
\[h:\namedddright{\Omega S^{n}}{E}{\Omega\Sigma\Omega S^{n}}
    {\pi_{2}}{\Omega(\Sigma\Omega S^{n}\wedge\Omega S^{n})}
    {\Omega\epsilon}{\Omega S^{2n-1}}.\]
A homology calculation shows that $h_{\ast}$ is an epimorphism.
Thus a Serre spectral sequence calculation shows that the
fiber of $h$ has homology isomorphic to $\hlgy{S^{n-1}}$ and
so is homotopy equivalent to~$S^{n-1}$. Incorporating $h$ 
into~(\ref{Snevdgrm2}) and reorienting, we obtain a homotopy 
commutative diagram
\begin{equation}
  \label{Snevdgrm3}
  \diagram
     \Omega S^{n}\xto[rrr]^-{h}\ddouble
     & & & \Omega S^{2n-1}\dto^{\Omega E^{2}} \\
     \Omega S^{n}\rto^{E}
         & \Omega\Sigma\Omega S^{n}\rto^-{\Omega a}
         & \Omega F(S^{n})\rto^-{\Omega g} & \Omega^{3} S^{2n+1}.
  \enddiagram
\end{equation}

\textit{Proof of Theorem~\ref{improvedR}}:
Put~(\ref{Snevdgrm3}) together with the lower rectangle
in~(\ref{fulldgrm}) to obtain a homotopy commutative diagram
\begin{equation}
  \label{assemble}
  \diagram
     \Omega S^{n}\xto[rrr]^-{h}\ddouble
     & & & \Omega S^{2n-1}\dto^{\Omega E^{2}} \\
     \Omega S^{n}\rto^{E}\dto^{\Omega E}
         & \Omega\Sigma\Omega S^{n}
             \rto^-{\Omega a}\dto^{\Omega\Sigma\Omega E}
         & \Omega F(S^{n})\rto^-{\Omega g}\dto^{\Omega F(E)}
         & \Omega^{3} S^{2n+1}\dto \\
     \Omega^{2} S^{n+1}\rto^-{E}
         & \Omega\Sigma\Omega^{2} S^{n+1}\rto^-{\Omega a}
         & \Omega F(\Omega S^{n+1})\rto^-{\Omega f}
         & \Omega G^{2n+1}.
  \enddiagram
\end{equation}
When $n=2m$, let $S$ be the composite $\Omega f\circ\Omega a\circ E$
along the bottom row, and when $n=2m-1$ let $S^{\prime}$ be the
composite $\Omega f\circ\Omega a\circ E$ along the bottom row.
Then the outer perimeter of~(\ref{assemble}) gives both diagrams
asserted by the theorem.
$\qqed$

\section{Factoring the $4^{th}$-power map on $\Omega^{2} S^{2n+1}$
         through the double suspension}
\label{sec:4factor}

In this section we prove Theorem~\ref{4factor}.
We begin by recalling $\hlgy{\Omega S^{2n+1}\{2\}}$.
Consider the homotopy fibration sequence
\[\namedddright{\Omega^{2} S^{2n+1}}{}{\Omega S^{2n+1}\{2\}}
     {}{\Omega S^{2n+1}}{2}{\Omega S^{2n+1}}.\]
Since $2_{\ast}$ is zero in homology,
the Serre spectral sequence converging to $\hlgy{\Omega S^{2n+1}\{2\}}$
collapses at~$E_{2}$, and so there is a coalgebra isomorphism
$\hlgy{\Omega S^{2n+1}\{2\}}\cong\hlgy{\Omega S^{2n+1}}\otimes
     \hlgy{\Omega^{2} S^{2n+1}}$.
Recall that $\hlgy{\Omega S^{2n+1}}\cong\zmodtwo [a_{2n}]$ and
$\hlgy{\Omega^{2} S^{2n+1}}\cong\left(
    \otimes_{i=1}^{\infty}\Lambda (b_{2^{i}n-1})\right)
    \otimes\left(\otimes_{i=2}^{\infty}\zmodtwo [c_{2^{i}n-2}]\right)$,
where $\beta b_{2^{i}n-1}=c_{2^{i}n-2}$ for $i\geq 2$. Further,
the inclusion of the bottom Moore space
\(\namedright{P^{2n}(2)}{}{\Omega S^{2n+1}\{2\}}\)
implies that $\beta a_{2n}=b_{2n-1}$ in $\hlgy{\Omega S^{2n+1}\{2\}}$.

\begin{lemma}
   \label{Alemma}
   There is a space $A$ and a map
   \(\overline{E}\colon\namedright{A}{}{\Omega S^{2n+1}\{2\}}\)
   such that $\hlgy{A}\cong\Lambda(x_{2n-1},y_{2n})$ with
   $\beta y_{2n}=x_{2n-1}$, and $\overline{E}_{\ast}$ is an inclusion.
\end{lemma}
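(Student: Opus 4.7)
The plan is to construct $A$ as a three-cell extension of the bottom Moore space $P^{2n}(2)$ by attaching a $(4n-1)$-cell along a map whose composite with the natural inclusion into $\Omega S^{2n+1}\{2\}$ is null homotopic, with the extension realizing the Pontrjagin product $a_{2n}\cdot b_{2n-1}$.

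First I would produce a natural Moore space inclusion. Since $S^{2n+1}\{2\}$ is the homotopy fiber of the degree two map on $S^{2n+1}$, its bottom cells form the Moore space $P^{2n+1}(2)$, giving a canonical inclusion $P^{2n+1}(2)\hookrightarrow S^{2n+1}\{2\}$. Its adjoint $j\colon P^{2n}(2)\to \Omega S^{2n+1}\{2\}$ is a homology injection sending $x_{2n-1}\mapsto b_{2n-1}$ and $y_{2n}\mapsto a_{2n}$, and respects the Bockstein $\beta y_{2n}=x_{2n-1}$.

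Next, I would attach the $(4n-1)$-cell. The goal is to choose $\alpha\colon S^{4n-2}\to P^{2n}(2)$ so that $j\circ\alpha$ is null homotopic in $\Omega S^{2n+1}\{2\}$. Such an $\alpha$ should exist because the Pontrjagin product on $H_{*}(\Omega S^{2n+1}\{2\};\zmodtwo)$ is graded commutative (loop-space homology with $\zmodtwo$ coefficients), so the Samelson commutator of $a_{2n}$ and $b_{2n-1}$ vanishes in homology; a geometric Whitehead-product-type element representing a suitable class in $\pi_{4n-2}(P^{2n}(2))$ therefore dies in $\pi_{4n-2}(\Omega S^{2n+1}\{2\})$. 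Homological triviality of $\alpha$ is automatic since $H_{4n-2}(P^{2n}(2);\zmodtwo)=0$. Set $A = P^{2n}(2)\cup_{\alpha}e^{4n-1}$ and extend $j$ to $\overline{E}\colon A\to \Omega S^{2n+1}\{2\}$ using a null homotopy of $j\circ\alpha$.

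Finally, I would verify the asserted structure of $A$. Cellular homology yields $H_{*}(A;\zmodtwo)$ with one $\zmodtwo$ generator in each of the degrees $0, 2n-1, 2n, 4n-1$, matching $\Lambda(x_{2n-1},y_{2n})$, and the Bockstein $\beta y_{2n}=x_{2n-1}$ is inherited from the Moore space $P^{2n}(2)$. For the injectivity of $\overline{E}_{*}$, the low-degree classes inherit from $j_{*}$; for the top class, any two choices of null homotopy of $j\circ\alpha$ differ by an element of $\pi_{4n-1}(\Omega S^{2n+1}\{2\})$, and one selects the null homotopy so that $\overline{E}_{*}$ sends the $(4n-1)$-cell to the nonzero decomposable $a_{2n}\cdot b_{2n-1}$. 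The main obstacle is the simultaneous construction of $\alpha$ and the correct null homotopy realizing the desired Pontrjagin product; this requires a careful analysis of $\pi_{4n-2}(P^{2n}(2))$, the Samelson product structure, and the Hurewicz image in $\pi_{4n-1}(\Omega S^{2n+1}\{2\})$.
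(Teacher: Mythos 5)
Your construction does not establish the lemma: the whole content is pushed into the unproved claim that there exists an attaching map \(\alpha\colon S^{4n-2}\to P^{2n}(2)\) of Whitehead-product type with \(j\circ\alpha\) null homotopic, and your justification for it is invalid. Graded commutativity of the Pontrjagin ring only says that the \emph{Hurewicz image} of the relevant Samelson/Whitehead element vanishes; since the Hurewicz map is far from injective, this gives no control over the homotopy class of \(j\circ\alpha\) in \(\pi_{4n-2}(\Omega S^{2n+1}\{2\})\), which is nonzero in the range at issue. Worse, the standard machinery for handling such mod-\(2\) Samelson/Whitehead products is unavailable precisely in this case: \(S^{2n+1}\{2\}\) is not an \(H\)-space, and \(P^{2n}(2)\wedge P^{2n}(2)\) does not split as a wedge of Moore spaces (this is why the paper's Section~\ref{sec:deferredproofs} restricts its Samelson product arguments to \(r\geq 2\)), so even defining the ``Whitehead-product-type element'' with the properties you need is problematic. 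You acknowledge this as ``the main obstacle,'' but it is exactly the lemma's content, so the proposal is incomplete at its crux. A secondary error: you cannot arrange the top-cell image to be \(a_{2n}b_{2n-1}\) by ``selecting the null homotopy,'' because changing the null homotopy alters \(\overline{E}_{*}\) on the top class only by spherical, hence primitive, elements, while \(a_{2n}b_{2n-1}\) is decomposable and non-primitive. Whether the decomposable component appears is forced by the coalgebra (cup-product) structure of \(A\), i.e.\ by the Hopf-invariant data of \(\alpha\) itself, which again returns you to the unproved existence statement; note also that the lemma needs \(\hlgy{A}\cong\Lambda(x_{2n-1},y_{2n})\) as a coalgebra, which your cellular computation of the graded vector space does not address.

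For contrast, the paper avoids any Whitehead-product vanishing altogether: it takes \(Y=(\Omega S^{2n+1}\{2\})_{4n-1}\), pinches out the bottom Moore space to see \(Z\simeq P^{4n-1}(2)\vee S^{4n-1}\), uses the resulting map \(\namedright{Y}{}{P^{4n-1}(2)}\) (which kills \(c_{4n-2}\) and \(b_{4n-1}\)) to form a fibration \(\nameddright{F}{t}{Y}{}{P^{4n-1}(2)}\), and defines \(A\) as the \((4n-1)\)-skeleton of \(F\). There the class \(b_{2n-1}\otimes a_{2n}\) already exists in \(Y\), and a Serre spectral sequence calculation gives both the coalgebra isomorphism \(\hlgy{A}\cong\Lambda(x_{2n-1},y_{2n})\) with \(\beta y_{2n}=x_{2n-1}\) and the injectivity of \(\overline{E}_{*}\). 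If you want to salvage your cell-attachment approach, you would have to produce \(\alpha\) and verify \(j\circ\alpha\simeq\ast\) by some such indirect argument, at which point you have essentially reproduced the paper's construction.
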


\begin{proof}
Let $Y=(\Omega S^{2n+1}\{2\})_{4n-1}$. A vector space basis
for $\hlgy{Y}$ is given by
\[\{b_{2n-1},a_{2n},c_{4n-2},b_{4n-1},b_{2n-1}\otimes a_{2n}\}\]
and the action of the Bockstein is given by
$\beta a_{2n}=b_{2n-1}$ and $\beta b_{4n-1}=c_{4n-2}$.
Let $Z$ be the quotient space obtained by pinching out the bottom
Moore space $P^{2n}(2)$ in $Y$. A vector space basis for $\hlgy{Z}$
is given by $\{d_{4n-2},e_{4n-1},f_{4n-1}\}$ where
these three elements are the images of
$c_{4n-2},b_{4n-1},b_{2n-1}\otimes a_{2n}$ respectively. In particular,
$\beta e_{4n-1}=d_{4n-2}$. For low dimensional reasons, the
homotopy type of $Z$ is determined by its homology, so
$Z\simeq P^{4n-1}(2)\vee S^{4n-1}$. Pinching to the Moore space
gives a composite
\(\nameddright{Y}{}{Z}{}{P^{4n-1}(2)}\)
which is onto in homology. Define the space~$F$ by the homotopy 
fibration
\[\nameddright{F}{t}{Y}{}{P^{4n-1}(2)}.\]
Let $A$ be the $(4n-1)$-skeleton of $F$. A Serre spectral sequence
calculation shows that $\hlgy{A}$ has vector space basis
$\{x_{2n-1},y_{2n},z_{4n-1}\}$ where $t_{\ast}$ sends these three
elements to $b_{2n-1},a_{2n}$, and $b_{2n-1}\otimes a_{2n}$
respectively. Since $t_{\ast}$ is a coalgebra map and commutes
with Bocksteins, we obtain a coalgebra isomorphism
$\hlgy{A}\cong\Lambda (x_{2n-1},y_{2n})$ with $\beta y_{2n}=x_{2n-1}$.
Moreover, by construction, the composite
\[\overline{E}:A=(F)_{4n-1}\hookrightarrow\namedright{F}{}
    {Y=(\Omega S^{2n+1}\{2\})_{4n-1}}\hookrightarrow\Omega S^{2n+1}\{2\}.\]
is an inclusion in homology.
\end{proof}

\begin{lemma}
   \label{tgtinclusion}
   There is a homotopy fibration
   \(\nameddright{S^{2n-1}}{}{A}{}{S^{2n}}\)
   and a homotopy fibration diagram
   \[\diagram
        S^{2n-1}\rto\dto^{E^{2}} & A\rto\dto^-{\overline{E}}
            & S^{2n}\dto^{E} \\
        \Omega^{2} S^{2n+1}\rto
            & \Omega S^{2n+1}\{2\}\rto & \Omega S^{2n+1}.
     \enddiagram\]
\end{lemma}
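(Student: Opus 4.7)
The plan is to construct \(p\colon A\to S^{2n}\) as a lift through the \(EHP\) fibration, identify its homotopy fiber with \(S^{2n-1}\) via a Serre spectral sequence, and then check that the induced map of fibers agrees with \(E^{2}\).

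Let \(\pi\colon\Omega S^{2n+1}\{2\}\to\Omega S^{2n+1}\) be the projection in the fibration of the second-power map, and set \(\gamma=\pi\circ\overline{E}\). Tracing \(\overline{E}_{\ast}\) through the construction of \(A\) in Lemma~\ref{Alemma}, the class \(y_{2n}\) maps to \(a_{2n}\in H_{2n}(\Omega S^{2n+1};\mathbb{Z}/2)\), while \(x_{2n-1}\) and \(x_{2n-1}y_{2n}\) are sent to classes lying in the image of \(\Omega^{2} S^{2n+1}\) and hence killed by \(\pi\). To construct \(p\) I use the \(EHP\) fibration \(\nameddright{S^{2n}}{E}{\Omega S^{2n+1}}{H}{\Omega S^{4n+1}}\). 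Since \(\Omega S^{4n+1}\) is \((4n-1)\)-connected and \(A\) is a CW complex of dimension \(4n-1\), cellular approximation forces \(H\circ\gamma\) to be null homotopic, so \(\gamma\) lifts to a map \(p\colon A\to S^{2n}\) with \(E\circ p\simeq\gamma\). This is the right square of the asserted diagram.

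By construction \(p_{\ast}(y_{2n})\) is the fundamental class of \(S^{2n}\) modulo~\(2\), and \(p_{\ast}\) vanishes on \(x_{2n-1}\) and \(x_{2n-1}y_{2n}\) for dimensional reasons. Let \(F\) be the homotopy fiber of \(p\). The Serre spectral sequence for \(\nameddright{F}{}{A}{p}{S^{2n}}\) has only two nontrivial columns \(s=0,2n\), so the sole possible differential is \(d_{2n}\); an inductive comparison with \(H_{\ast}(A;\mathbb{Z}/2)=\{1,x_{2n-1},y_{2n},x_{2n-1}y_{2n}\}\) shows \(d_{2n}\) must vanish and forces \(H_{\ast}(F;\mathbb{Z}/2)\cong H_{\ast}(S^{2n-1};\mathbb{Z}/2)\). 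The long exact sequence of homotopy groups implies \(F\) is \((2n-2)\)-connected, so Hurewicz and Whitehead produce a \(2\)-local homotopy equivalence \(S^{2n-1}\to F\). Composing with the fiber inclusion yields the horizontal map \(S^{2n-1}\to A\) and the homotopy fibration \(\nameddright{S^{2n-1}}{}{A}{p}{S^{2n}}\).

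Finally, a choice of homotopy witnessing \(E\circ p\simeq\pi\circ\overline{E}\) induces a map of fibers \(S^{2n-1}\to\Omega^{2} S^{2n+1}\). On \(H_{2n-1}(\,\cdot\,;\mathbb{Z}/2)\) this map is degree one, because \(\overline{E}_{\ast}(x_{2n-1})=b_{2n-1}\) is the bottom class of \(\Omega^{2} S^{2n+1}\). Since \(\pi_{2n-1}(\Omega^{2} S^{2n+1})\cong\pi_{2n+1}(S^{2n+1})\cong\mathbb{Z}\) is generated by \(E^{2}\), the induced map is an odd multiple of \(E^{2}\), which can be absorbed by precomposing the chosen equivalence \(S^{2n-1}\to F\) with a \(2\)-local self-equivalence of \(S^{2n-1}\). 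The most subtle step is this last identification: the spectral sequence pins down the homotopy type of \(F\) but not the precise homotopy class of the fiber map, so the rank-one calculation of \(\pi_{2n-1}(\Omega^{2} S^{2n+1})\) is essential for obtaining \(E^{2}\) on the nose.
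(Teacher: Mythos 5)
Your proposal is correct and takes essentially the same route as the paper: the paper obtains the map $A\to S^{2n}$ by factoring the composite $A\to\Omega S^{2n+1}$ through the $(4n-1)$-skeleton $S^{2n}$ (your lift through the $EHP$ fibration, using that $\Omega S^{4n+1}$ is $(4n-1)$-connected, is the same observation), then identifies the fiber as $S^{2n-1}$ by a Serre spectral sequence and the induced fiber map as $E^{2}$ by a degree-one argument in $H_{2n-1}$. Your explicit handling of the odd-multiple ambiguity via $\pi_{2n-1}(\Omega^{2}S^{2n+1})$ and a $2$-local self-equivalence of $S^{2n-1}$ is just a slightly more careful rendering of the paper's final step.
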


\begin{proof}
Since $A$ has dimension $4n-1$, the composite
\(\nameddright{A}{\overline{E}}{\Omega S^{2n+1}\{2\}}{}{\Omega S^{2n+1}}\)
factors through the $(4n-1)$-skeleton of $\Omega S^{2n+1}$, which
is $S^{2n}$, giving a map
\(f:\namedright{A}{}{S^{2n}}\)
which makes the right square in the statement of the lemma
homotopy commute. Since $\overline{E}_{\ast}$ is an inclusion,
$f_{\ast}$ is onto. A Serre spectral sequence calculation shows
that the homotopy fiber of $f$ has homology isomorphic to $\hlgy{S^{2n-1}}$
and so the fiber is homotopy equivalent to $S^{2n-1}$. Again,
since $\overline{E}_{\ast}$ is onto, the induced map of fibers
\(\namedright{S^{2n-1}}{}{\Omega^{2} S^{2n-1}}\)
is degree one in $H_{2n-1}(\ )$ and so is homotopic to $E^{2}$.
\end{proof}

Let
\(\partial_{A}:\namedright{\Omega S^{2n}}{}{S^{2n-1}}\)
be the connecting map for the homotopy fibration
\(\nameddright{S^{2n-1}}{}{A}{}{S^{2n}}\).
Continuing the homotopy fibration diagram in Lemma~\ref{tgtinclusion}
one step to the left proves the following.

\begin{corollary}
   \label{2partial}
   There is a homotopy commutative diagram
   \[\diagram
        \Omega S^{2n}\rto^-{\partial_{A}}\dto^{\Omega E}
            & S^{2n-1}\dto^{E^{2}} \\
        \Omega^{2} S^{2n+1}\rto^-{2} & \Omega^{2} S^{2n+1}.
     \enddiagram\]
   $\qqed$
\end{corollary}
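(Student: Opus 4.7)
The plan is to extend the homotopy fibration diagram of Lemma~\ref{tgtinclusion} one step to the left and identify the resulting connecting map on the bottom row as the $2^{nd}$-power map on $\Omega^{2} S^{2n+1}$. The main input is the naturality of the Puppe-style extension of a fibration: a map of fibrations
\(\nameddright{F}{}{E}{f}{B}\)
to
\(\nameddright{F^{\prime}}{}{E^{\prime}}{f^{\prime}}{B^{\prime}}\)
induces a homotopy commutative square
\(\Omega B\rightarrow F\), \(\Omega B^{\prime}\rightarrow F^{\prime}\),
compatible with the loop of the base map and the induced map of fibers. Applied to the diagram in Lemma~\ref{tgtinclusion}, with the top fibration \(\nameddright{S^{2n-1}}{}{A}{}{S^{2n}}\) extended to the left by its connecting map \(\partial_{A}\colon\namedright{\Omega S^{2n}}{}{S^{2n-1}}\), this immediately produces a homotopy commutative square with left vertical \(\Omega E\) and right vertical \(E^{2}\).

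The remaining step is to identify the connecting map of the bottom fibration
\(\nameddright{\Omega^{2} S^{2n+1}}{}{\Omega S^{2n+1}\{2\}}{}{\Omega S^{2n+1}}\).
Since \(\Omega S^{2n+1}\{2\}\) is by definition the homotopy fiber of the $2^{nd}$-power map
\(2\colon\namedright{\Omega S^{2n+1}}{}{\Omega S^{2n+1}}\),
the Puppe sequence reads
\[\lllnamedddright{\Omega^{2} S^{2n+1}}{\Omega(2)}
   {\Omega^{2} S^{2n+1}}{}{\Omega S^{2n+1}\{2\}}{}{\Omega S^{2n+1}}.\]
But \(\Omega\) of the $2^{nd}$-power map on any $H$-space equals the $2^{nd}$-power map on its loop space, so the leftmost map is the $2^{nd}$-power map on $\Omega^{2} S^{2n+1}$. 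This is exactly the bottom horizontal arrow in the desired square.

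Thus the only real content beyond Lemma~\ref{tgtinclusion} is the standard identification $\Omega(2)\simeq 2$ on loops of an $H$-space, and the naturality of the connecting map in a map of fibrations. No obstacle is expected; the corollary is a formal consequence of the previous lemma. (This is presumably why the excerpt marks the statement with a $\qqed$ and gives no separate proof.)
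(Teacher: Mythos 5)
Your proposal is correct and matches the paper's own (one-line) argument: the corollary is obtained by continuing the fibration diagram of Lemma~\ref{tgtinclusion} one step to the left, with the bottom connecting map identified as the looped second-power map, hence the second-power map on $\Omega^{2} S^{2n+1}$. The only point left implicit in both treatments is the standard (Eckmann--Hilton) identification $\Omega(2)\simeq 2$ on $\Omega^{2} S^{2n+1}$, up to the usual sign ambiguity in connecting maps, which is harmless here.
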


\noindent
\textit{Proof of Theorem~\ref{4factor}:}
By~\cite[\S 4]{C}, the composite
\(\nameddright{\Omega^{2} S^{2n+1}}{2}{\Omega^{2} S^{2n+1}}{\Omega H}
     {\Omega^{2} S^{4n+1}}\)
is null homotopic, so there is a lift of $2$ through $\Omega E$
to a map
\(\namedright{\Omega^{2} S^{2n+1}}{}{\Omega S^{2n}}\).
Combining this with Corollary~\ref{2partial} gives a homotopy
commutative diagram
\[\diagram
     & \Omega S^{2n}\rto^-{\partial_{A}}\dto^{\Omega E}
          & S^{2n-1}\dto^{E^{2}} \\
     \Omega^{2} S^{2n+1}\rto^-{2}\urto & \Omega^{2} S^{2n+1}\rto^-{2}
          & \Omega^{2} S^{2n+1}
  \enddiagram\]
which proves the theorem.
$\qqed$
\medskip

\begin{remark}
The space $A$ is similar to the unit tangent bundle $\tau(S^{2n})$.
They both have the same homology over the Steenrod algebra, and both
are total spaces in fibrations over $S^{2n}$ with fiber $S^{2n-1}$.
It would be curious to know whether $A$ is homotopy equivalent
to $\tau(S^{2n})$.
\end{remark}

\section{The $4^{th}$-power map on $\Omega W_{n}$}
\label{sec:expWn} 

In this section we prove Theorem~\ref{Wnpower}. To do so we will
use Theorem~\ref{4factor} and Corollary~\ref{Richter} along with
several other known results collected in Theorems~\ref{loop2facts} 
and~\ref{BWn}.  

For $r\geq 1$, let 
\(\underline{2}^{r}\colon\namedright{S^{2n+1}}{}{S^{2n+1}}\) 
be the map of degree~$2^{r}$, and let \curly\ be 
its homotopy fiber. Let  
\(2^{r}\colon\namedright{\Omega S^{2n+1}}{2^{r}}{\Omega S^{2n+1}}\) 
be the $2^{r}$-power map, and let $\Omega S^{2n+1}\{2^{r}\}$ be 
its homotopy fiber. Note that the $2^{r}$-power map need not be an 
$H$-space so $\Omega S^{2n+1}\{2^{r}\}$ need not be an $H$-space. 
Theorem~\ref{loop2facts} summarizes some of the information 
from~\cite[\S 4]{C} regarding the difference 
$\Omega\underline{2}^{r}-2^{r}$.    

\begin{theorem}
   \label{loop2facts}
   The following hold:
   \begin{letterlist}
      \item the composite 
            \(\llnameddright{\Omega S^{2n+1}}{\Omega\underline{2}-2}
                {\Omega S^{2n+1}}{\Omega H}{\Omega^{2} S^{4n+1}}\)
            is null homotopic; 
      \item after looping, $\Omega^{2}\underline{2}^{r}-2^{r}$ has order $2$,
            and there are homotopies
            $2^{r+1}\simeq 2\Omega^{2}\underline{2}^{r}\simeq 
               \Omega^{2}\underline{2}^{r+1}$. 
   \end{letterlist}
   $\qqed$
\end{theorem}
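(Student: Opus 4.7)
The plan is to follow Cohen's argument in \cite[\S 4]{C}, which rests on two classical ingredients. The first is James's factorization $\Omega\underline{2} - 2 \simeq w \circ H$ on $\Omega S^{2n+1}$, where $w \colon \Omega S^{4n+1} \to \Omega S^{2n+1}$ is the adjoint of the Whitehead square $[\iota, \iota]$, together with the Hopf-invariant identity $\Omega(H \circ w) \simeq 2$ on $\Omega^{2} S^{4n+1}$, reflecting that $[\iota, \iota]$ has Hopf invariant $\pm 2$. The second is Cohen's prior result, recorded in the introduction, that $\Omega H$ has order $2$. After one loop, $\Omega w$ becomes an $H$-map, and writing $\delta = \Omega^{2}\underline{2} - 2$ on $\Omega^{2} S^{2n+1}$, James's formula loops to $\delta \simeq \Omega w \circ \Omega H$.

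For part (a), I would loop James's formula and compute directly,
\[
\Omega H \circ (\Omega^{2}\underline{2} - 2) \simeq \Omega H \circ \Omega w \circ \Omega H \simeq \Omega(H \circ w) \circ \Omega H \simeq 2 \cdot \Omega H \simeq \ast.
\]
The order-$2$ claim in part (b) at $r = 1$ follows by the same style of manipulation, using that $\Omega w$ is an $H$-map to move $2$ past it: $2\delta \simeq \Omega w \circ (2 \cdot \Omega H) \simeq \ast$. For general $r$, one writes $\Omega\underline{2}^{r} - 2^{r} \simeq w_{r} \circ H$ by iterating James's formula via $\Omega\underline{2}^{r} = \Omega\underline{2} \circ \Omega\underline{2}^{r-1}$ and absorbing the cross terms, and then the same argument gives $2(\Omega^{2}\underline{2}^{r} - 2^{r}) \simeq \ast$. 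The first stated homotopy $2^{r+1} \simeq 2\,\Omega^{2}\underline{2}^{r}$ is then the immediate rearrangement of $2(\Omega^{2}\underline{2}^{r} - 2^{r}) \simeq \ast$. For the second homotopy, note that
\[
\Omega^{2}\underline{2}^{r+1} - 2\,\Omega^{2}\underline{2}^{r} \;=\; (\Omega^{2}\underline{2} - 2) \circ \Omega^{2}\underline{2}^{r} \;=\; \delta \circ \Omega^{2}\underline{2}^{r};
\]
for $r \geq 1$ this factors as $\delta \circ \Omega^{2}\underline{2} \circ \Omega^{2}\underline{2}^{r-1}$, so it suffices to show $\delta \circ \Omega^{2}\underline{2} \simeq \ast$. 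Expanding using $\Omega^{2}\underline{2} = 2 + \delta$ and that $\delta$ is an $H$-map, this equals $2\delta + \delta^{2}$, and the identity $\delta^{2} = \Omega w \circ (\Omega H \circ \Omega w) \circ \Omega H \simeq \Omega w \circ 2 \circ \Omega H \simeq 2\delta$ (using $\Omega(H \circ w) \simeq 2$ and commuting $2$ past the $H$-map $\Omega w$) reduces both summands to $2\delta \simeq \ast$.

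The main obstacle is the classical input $\Omega(H \circ w) \simeq 2$, which is the one non-formal identity in the whole argument, coming from the Hopf-invariant-of-Whitehead-square computation. Everything else reduces to $H$-map bookkeeping in double loop spaces, together with the distributivity of composition over $H$-space sums and the fact that $H$-maps commute with $k$-power maps on homotopy commutative $H$-spaces. Iterating James's formula to obtain $\Omega\underline{2}^{r} - 2^{r} \simeq w_{r} \circ H$ for $r \geq 2$ also requires some care in combining successive cross terms, but is mechanical once the $r = 1$ case and the $H$-map machinery are in hand.
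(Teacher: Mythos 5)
The first thing to say is that the paper does not actually prove Theorem~\ref{loop2facts}: it is stated with an immediate end-of-proof mark and is introduced as a summary of results from~\cite[\S 4]{C}, so there is no internal proof to compare against. Your proposal is therefore a reconstruction of Cohen's argument, and the reconstruction has a genuine error at its core. The problematic ingredient is ``$\Omega(H\circ w)\simeq 2$, reflecting that $[\iota,\iota]$ has Hopf invariant $\pm 2$.'' The Whitehead square with Hopf invariant $\pm 2$ is $[\iota_{2n},\iota_{2n}]\in\pi_{4n-1}(S^{2n})$, i.e.\ the Whitehead square of an \emph{even} sphere. The Whitehead square relevant to James's formula on $\Omega S^{2n+1}$ is $[\iota_{2n+1},\iota_{2n+1}]\in\pi_{4n+1}(S^{2n+1})$, and this group is finite, so its image under $H\colon\pi_{4n+1}(S^{2n+1})\to\pi_{4n+1}(S^{4n+1})\cong\mathbb{Z}$ is necessarily zero. (Equivalently: the Hurewicz image of the adjoint Samelson product is the graded commutator $[a_{2n},a_{2n}]=a^{2}-a^{2}=0$ because $a_{2n}$ has even degree, in contrast with the even-sphere case where $[b_{2n-1},b_{2n-1}]=2b^{2}$.) Hence $H\circ w$ is degree $0$, not $2$, on the bottom cell, and $\Omega(H\circ w)\not\simeq 2$ --- the two maps already differ on $\pi_{4n-1}$. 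This false identity drives your computation of part (a), the identity $\delta^{2}\simeq 2\delta$, and the second homotopy in (b), so all of those collapse. (Note also that if both of your ingredients were true, then before looping one would get $H\circ(\Omega\underline{2}-2)\simeq\underline{2}\circ H$, which is in tension with (a) as it is actually used.) The one step that genuinely survives is $2\delta\simeq\Omega w\circ(2\cdot\Omega H)\simeq\ast$, which needs only James's formula and the order of $\Omega H$.

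A second, smaller problem is the level of looping in part (a). As the theorem is applied in the proof of Theorem~\ref{Wnpower}, (a) is needed in the form $H\circ(\Omega\underline{2}-2)\simeq\ast$ with $H$ the \emph{unlooped} James--Hopf invariant: the lift constructed there lands in the fibre $S^{4n-2}$ of $H$ itself, not of $\Omega H$. Your argument operates only after a further loop, where $\Omega H$ becomes an $H$-map and the distributivity manipulations are available; it does not yield the unlooped statement. Cohen's actual argument in~\cite[\S 4]{C} has to work at the unlooped level and rests on the composition and distributivity formulas for James--Hopf invariants (such as $H\circ\Omega\underline{2}\simeq\Omega\underline{4}\circ H$ and the formula for $H$ of a product of loops), not on a Hopf-invariant-two identity for the odd Whitehead square. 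Finally, the iterated factorization $\Omega\underline{2}^{r}-2^{r}\simeq w_{r}\circ H$ for $r\geq 2$ is asserted as ``mechanical'' but is exactly where the cross terms that your argument waves away would have to be controlled.
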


Next, from the double suspension we obtain a homotopy fibration
\(\nameddright{W_{n}}{}{S^{2n-1}}{E^{2}}{\Omega^{2} S^{2n+1}}\)
which defines the space $W_{n}$. Gray~\cite{Gr} showed that $W_{n}$
has a classifying space $BW_{n}$ which fits nicely with the
$EHP$-sequence. Specifically, he proved the following.

\begin{theorem}
   \label{BWn}
   There are homotopy fibrations
   \[\nameddright{S^{2n-1}}{E^{2}}{\Omega^{2} S^{2n+1}}{\nu}{BW_{n}}\]
   \[\nameddright{BW_{n}}{j}{\Omega^{2} S^{4n+1}}{\phi}{S^{4n-1}}\]
   satisfying the following properties:
   \begin{letterlist}
      \item the composite
            \(\nameddright{\Omega^{2} S^{2n+1}}{\nu}{BW_{n}}{j}
                 {\Omega^{2} S^{4n+1}}\)
            is homotopic to the looped Hopf invariant $\Omega H$;
      \item the composite
            \(\nameddright{S^{4n-1}}{E^{2}}{\Omega^{2} S^{4n+1}}
                 {\phi}{S^{4n-1}}\)
            is homotopic to the degree $2$ map;
      \item the composite
            \(\nameddright{BW_{n}}{j}{\Omega^{2} S^{4n+1}}{2}
                 {\Omega^{2} S^{4n+1}}\)
            is null homotopic;
      \item after looping,
            \(\namedright{\Omega^{3} S^{4n+1}}{\Omega\phi}
                 {\Omega S^{4n-1}}\)
            is homotopic to the composite
            \(\nameddright{\Omega^{3} S^{4n+1}}{\Omega P}
                 {\Omega S^{2n}}{H}{\Omega S^{4n-1}}\).
   \end{letterlist}
   $\qqed$
\end{theorem}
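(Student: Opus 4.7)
The statement is due to Gray~\cite{Gr}, so the plan is to follow his construction. The strategy is to build $\phi$ and $BW_n$ first and then derive everything else by formal manipulation of fibration sequences together with the James--Hopf machinery. Concretely, I would (i) construct $\phi\colon \Omega^{2} S^{4n+1}\to S^{4n-1}$ satisfying property~(b), (ii) define $BW_n$ as the homotopy fibre of $\phi$, (iii) lift $\Omega H$ through the fibre inclusion $j$ to produce $\nu$, (iv) identify the fibre of $\nu$ with $S^{2n-1}$ and its connecting map with $E^{2}$ to obtain the first fibration, and (v) verify properties~(c) and~(d) from compatibility with the EHP sequence.

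For step~(i), the bottom cell of $\Omega^{2} S^{4n+1}$ is $S^{4n-1}$, included by $E^{2}$. A genuine retraction is ruled out at the prime~$2$ by the non-existence of Hopf invariant one in this range, but a retraction up to the degree~$2$ map exists. I would produce it either by obstruction theory against the James filtration on $\Omega S^{4n+1}$, where the factor~$2$ emerges as the first non-vanishing James--Hopf obstruction, or by identifying $\phi$ via a $2$-primary James--Hopf type construction. Step~(ii) is tautological once $\phi$ is fixed, delivering the second fibration and the map~$j$.

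For step~(iii), to lift $\Omega H$ through $j$ I need $\phi\circ\Omega H$ to be null homotopic. Precomposing with $E^{2}\colon S^{2n-1}\to \Omega^{2} S^{2n+1}$ and using~(b) turns this into $2\cdot (H\circ \text{bottom cell inclusion})$, which vanishes because $H$ kills the bottom cell $S^{2n}$ of $\Omega S^{2n+1}$; a cellular/Serre spectral sequence argument upgrades this to a global null homotopy. In step~(iv), compare the new fibration $BW_n\to\Omega^{2} S^{4n+1}\to S^{4n-1}$ with the defining fibration $W_n\to S^{2n-1}\to\Omega^{2} S^{2n+1}$ by an octahedral argument; the comparison pins the fibre of $\nu$ to $S^{2n-1}$ with connecting map $E^{2}$ and simultaneously yields $\Omega BW_n\simeq W_n$. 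Properties~(a) and~(b) are then built in. Property~(c) falls out of the James--Hopf origin of $\phi$: the composite $2\circ j$ is killed because $j$ factors through a piece already annihilated by~$2$ at the James--Hopf level, combined with $\phi\circ j\simeq\ast$. Property~(d) is the loop-level shadow of the compatibility of $\phi$ with the EHP connecting map~$P$, verified by chasing the looped EHP fibration $\Omega^{3} S^{4n+1}\xrightarrow{\Omega P}\Omega S^{2n}\xrightarrow{H}\Omega S^{4n-1}$ against $\Omega\phi$.

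The main obstacle is step~(i): pinning down that $\phi\circ E^{2}$ has degree exactly~$2$, not~$0$ nor a larger even integer. Producing some retraction up to an even multiple is comparatively easy by cellwise obstruction theory, but arguing that the leading coefficient is exactly~$2$ requires engaging carefully with the $2$-primary James--Hopf structure of $\Omega^{2} S^{4n+1}$. Once $\phi$ exists with the correct numerical property, the remainder of the theorem is a formal consequence of fibration sequences, naturality of~$H$, and the EHP machinery.
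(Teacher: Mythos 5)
The first thing to say is that the paper does not prove this theorem: it is quoted from Gray \cite{Gr}, which is why the statement ends with a $\qqed$ and no argument follows it. There is therefore no proof in the paper to measure your reconstruction against; it has to be judged on its own terms, and as it stands it has real gaps.

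The decisive one is step (iii). To lift $\Omega H$ through $j$ you must show that $\phi\circ\Omega H$ is null homotopic as a map $\Omega^{2}S^{2n+1}\to S^{4n-1}$. Your check only sees the bottom cell, where vanishing is automatic because $\Omega H\circ E^{2}\simeq\Omega(H\circ E)\circ E$ is already null homotopic, and the proposed ``cellular/Serre spectral sequence upgrade'' to a global null homotopy does not exist: $H_{*}(\Omega^{2}S^{2n+1};\mathbf{Z}/2\mathbf{Z})$ has generators in infinitely many degrees, so a map out of $\Omega^{2}S^{2n+1}$ that dies on the bottom cell can perfectly well be essential. The same objection applies to step (i): building $\phi$ on the infinite-dimensional double loop space $\Omega^{2}S^{4n+1}$ by cellwise obstruction theory leaves infinitely many obstruction groups with no mechanism offered for killing them, and this is exactly where the content of the theorem lives. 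Step (iv) is also underpowered: a lift $\nu$ of $\Omega H$ through $j$ is determined only up to the action of $[\Omega^{2}S^{2n+1},\Omega S^{4n-1}]$, different lifts need not have equivalent fibers, and no octahedral argument identifies the fiber of an arbitrary lift with $S^{2n-1}$; one must choose $\nu$ and compute the homology of its fiber. Property (c) is likewise not formal: $2\circ j\simeq\ast$ does not follow from $\phi\circ j\simeq\ast$ together with (b) unless you already know that the $2^{nd}$-power map on $\Omega^{2}S^{4n+1}$ factors through $\phi$, which is an additional theorem rather than bookkeeping. Note finally that your plan inverts Gray's logic: his central construction is the classifying space $BW_{n}$ itself, i.e.\ the delooping of $W_{n}$ giving the first fibration, from which the interaction with the $EHP$ sequence is extracted afterwards; in your outline that construction is relegated to a formal consequence of a map $\phi$ whose existence is the hardest point. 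If you need this result, cite \cite{Gr} as the paper does rather than reproving it.
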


\noindent
\textit{Proof of Theorem~\ref{Wnpower}:}
The proof is divided into two cases, one for $\Omega W_{2n}$ and
the other for $\Omega W_{2n-1}$.

\noindent
\textbf{Case 1:} $\mathbf{\Omega W_{2n}}$. Consider the homotopy
fibration sequence
\[\namedddright{\Omega^{3} S^{4n+1}}{\Omega\nu}{W_{2n}}{g}{S^{4n-1}}
    {E^{2}}{\Omega^{2} S^{4n+1}}\]
where the left map has been identified as a loop map by Theorem~\ref{BWn}.
By Theorem~\ref{BWn}~(b), the degree~$2$ map on $S^{4n-1}$ factors
through $E^{2}$, so the composite
\(\nameddright{W_{2n}}{g}{S^{4n-1}}{\underline{2}}{S^{4n-1}}\)
is null homotopic. On the other hand, after looping we have
$2\circ\Omega g\simeq\Omega g\circ 2$. Thus
$(\Omega\underline{2}-2)\circ\Omega g\simeq -2\Omega g$.
By Theorem~\ref{loop2facts}~(a),
$H\circ (\Omega\underline{2}-2)\simeq\ast$.
Therefore $H\circ (-2\Omega g)\simeq\ast$, and so $-2\Omega g$
lifts to the fiber of~$H$ to give a homotopy commutative diagram
\begin{equation}
  \label{W2ndgrm1}
  \diagram
       \Omega W_{2n}\rto^-{-2}\dto^{\lambda}
            & \Omega W_{2n}\dto^{\Omega g} \\
       S^{4n-2}\rto^-{E} & \Omega S^{4n-1}
  \enddiagram
\end{equation}
for some map $\lambda$.

Next, consider the diagram
\[\diagram
      & \Omega^{3} S^{4n+1}\rto^-{\Omega\nu}\dto^{2}\dlto & W_{2n}\dto^{2} \\
      \Omega S^{4n-1}\rto^-{\Omega E^{2}}
          & \Omega^{3} S^{4n+1}\rto^-{\Omega\nu} & W_{2n}.
  \enddiagram\]
The left triangle homotopy commutes by Corollary~\ref{Richter}.
The right square homotopy commutes since~$\Omega\nu$ is a loop map.
The bottom row is two consecutive maps in a homotopy fibration
and so is null homotopic. Thus $2\circ\Omega\nu\simeq\ast$, implying
that there is a homotopy fibration diagram
\begin{equation}
  \label{W2ndgrm2}
  \diagram
     \Omega W_{2n}\rto^-{\Omega g}\dto^{2}
         & \Omega S^{4n-1}\rto^-{\Omega E^{2}}\dto^{\gamma}
         & \Omega^{3} S^{4n+1}\rto^-{\Omega\nu}\dto & W_{2n}\dto^{2} \\
     \Omega W_{2n}\rdouble & \Omega W_{2n}\rto & \ast\rto & W_{2n}
  \enddiagram
\end{equation}
for some map $\gamma$.

Juxtaposing~(\ref{W2ndgrm1}) and the left most square in~(\ref{W2ndgrm2})
gives a homotopy commutative diagram
\[\diagram
     \Omega W_{2n}\rto^-{-2}\dto^{\lambda}
         & \Omega W_{2n}\rto^-{2}\dto^{\Omega g}
         & \Omega W_{2n}\ddouble \\
     S^{4n-2}\rto^-{E} & \Omega S^{4n-1}\rto^-{\gamma} & \Omega W_{2n}.
  \enddiagram\]
The lower row is null homotopic since $\Omega W_{2n}$ is $(8n-5)$-connected.
Thus $-4$ is null homotopic on $\Omega W_{2n}$, and so $4$ is null homotopic
as well.
\medskip

\noindent
\textbf{Case 2:} $\mathbf{\Omega W_{2n-1}}$.
In this case we do not have a factorization of the degree $2$ map
on $S^{4n-3}$ through the double suspension as we did for $S^{4n-1}$
in Case~1, so we have to work harder. We begin with some general
statements which are valid for any $W_{n}$. Since the map
\(\namedright{\Omega^{2} S^{2n+1}}{\Omega^{2}\underline{2}}
    {\Omega^{2} S^{2n+1}}\)
is degree two on the bottom cell, there is a homotopy fibration diagram
\begin{equation}
  \label{psidef}
  \diagram
     W_{n}\rto^-{g}\dto^{\psi}
         & S^{2n-1}\rto^-{E^{2}}\dto^{\underline{2}}
         & \Omega^{2} S^{2n+1}\dto^-{\Omega^{2}\underline{2}} \\
     W_{n}\rto^-{g} & S^{2n-1}\rto^-{E^{2}} & \Omega^{2} S^{2n+1}
  \enddiagram \
\end{equation}
for some map $\psi$. We will show that $\psi\circ\psi$ is null homotopic
using an argument due to~\cite{CMN1}. By Theorem~\ref{loop2facts}~(b),
$\Omega^{2}\underline{4}\simeq 4$ as self-maps of $\Omega^{2} S^{2n+1}$,
and by Theorem~\ref{4factor}, the $4^{th}$-power map on $\Omega^{2} S^{2n+1}$
factors through the double suspension. Thus the two fibration diagrams
\[\diagram
      W_{n}\rto^-{g}\dto^{\psi}
         & S^{2n-1}\rto^-{E^{2}}\dto^{\underline{2}}
         & \Omega^{2} S^{2n+1}\dto^{\Omega^{2}\underline{2}}
         & & W_{n}\rto^-{g}\dto
         & S^{2n-1}\rto^-{E^{2}}\dto^{\underline{4}}
         & \Omega^{2} S^{2n+1}\dto \\
      W_{n}\rto^-{g}\dto^{\psi}
         & S^{2n-1}\rto^-{E^{2}}\dto^{\underline{2}}
         & \Omega^{2} S^{2n+1}\dto^{\Omega^{2}\underline{2}}
         & & \ast\rto\dto & S^{2n-1}\rto^-{=}\dto^{=}
         & S^{2n-1}\dto^{E^{2}} \\
      W_{n}\rto^-{g} & S^{2n-1}\rto^-{E^{2}}
         & \Omega^{2} S^{2n+1} & & W_{n}\rto^-{g}
         & S^{2n-1}\rto^-{E^{2}} & \Omega^{2} S^{2n+1}
  \enddiagram\]
give two choices of a morphism of homotopy fibrations which covers
the $4^{th}$-power map on $\Omega^{2} S^{2n+1}$. The obstructions
to the two morphisms being fiber homotopic lie in
$H^{2n-1}(S^{2n-1};\pi_{2n-1}(W_{n}))$. But $\pi_{2n-1}(W_{n})=0$ by
connectivity, so the obstructions vanish. Hence $\psi\circ\psi\simeq\ast$.

Now specialize to $W_{2n-1}$.
We proceed for a while as in Case~$1$. From~(\ref{psidef}) we have
$\Omega\underline{2}\circ\Omega g\simeq\Omega g\circ\Omega\psi$.
On the other hand, we have
$2\circ\Omega g\simeq\Omega g\circ 2$.
Substracting then gives a homotopy commutative diagram
\[\diagram
      \Omega W_{2n-1}\rto^-{\Omega g}\dto^{\Omega\psi -2}
         & \Omega S^{4n-3}\dto^{\Omega\underline{2}-2} \\
      \Omega W_{2n-1}\rto^-{\Omega g} & \Omega S^{4n-3}.
  \enddiagram\]
By Theorem~\ref{loop2facts}~(a), $H\circ (\Omega\underline{2}-2)\simeq\ast$.
Therefore
$H\circ\Omega g\circ(\Omega\psi-2)\simeq
    H\circ(\Omega\underline{2}-2)\circ\Omega g\simeq\ast$,
and so $\Omega g\circ(\Omega\psi-2)$ lifts to the fiber of $H$ to
give a homotopy commutative diagram
\begin{equation}
  \label{W2n-1dgrm1}
  \diagram
       \Omega W_{2n-1}\rto^-{\Omega\psi -2}\dto^{\lambda}
            & \Omega W_{2n-1}\dto^{\Omega g} \\
       S^{4n-4}\rto^-{E} & \Omega S^{4n-3}
  \enddiagram
\end{equation}
for some map $\lambda$.

Next, consider the diagram
\[\diagram
      & \Omega^{3} S^{4n-1}\rto^-{\Omega\nu}
            \dto^{\Omega^{3}\underline{2}-2}\dlto
          & W_{2n-1}\dto^{\psi -2} \\
      \Omega S^{4n-3}\rto^-{\Omega E^{2}}
          & \Omega^{3} S^{4n-1}\rto^-{\Omega\nu} & W_{2n-1}.
  \enddiagram\]
The left triangle homotopy commutes by Corollary~\ref{Richter}.
The right square homotopy commutes since
$\Omega\nu\circ 2\simeq 2\circ\Omega\nu$ and, from~(\ref{psidef}),
$\Omega\nu\circ\Omega^{3}\underline{2}\simeq\psi\circ\Omega\nu$.
The bottom row is two consecutive maps in a homotopy fibration
and so is null homotopic. Thus $(\psi -2)\circ\Omega j\simeq\ast$,
implying that there is a homotopy fibration diagram
\begin{equation}
  \label{W2n-1dgrm2}
   \diagram
     \Omega W_{2n-1}\rto^-{\Omega g}\dto^{\Omega\psi -2}
         & \Omega S^{4n-3}\rto^-{\Omega E^{2}}\dto^{\gamma}
         & \Omega^{3} S^{4n-1}\rto^-{\Omega\nu}\dto
         & W_{2n-1}\dto^{\psi -2} \\
     \Omega W_{2n-1}\rdouble & \Omega W_{2n-1}\rto & \ast\rto & W_{2n-1}
  \enddiagram
\end{equation}
for some map $\gamma$.

Juxtaposing the leftmost squares in~(\ref{W2n-1dgrm1})
and~(\ref{W2n-1dgrm2}) gives a homotopy commutative diagram
\[\diagram
     \Omega W_{2n-1}\rto^-{\Omega\psi -2}\dto^{\lambda}
         & \Omega W_{2n-1}\rto^-{\Omega\psi -2}\dto^{\Omega g}
         & \Omega W_{2n-1}\ddouble \\
     S^{4n-4}\rto^-{E} & \Omega S^{4n-3}\rto^-{\gamma}
         & \Omega W_{2n-1}.
  \enddiagram\]
The lower row is null homotopic since $\Omega W_{2n-1}$ is
$(8n-9)$-connected. Thus
$(\Omega\psi -2)\circ (\Omega\psi -2)$ is null homotopic.
Distributing, $\Omega\psi\circ\Omega\psi - 4\Omega\psi +4$ is
null homotopic. We have already seen that $\psi\circ\psi$ is
null homotopic, so if we can show that $4\Omega\psi$ is null
homotopic then the $4^{th}$-power map on $\Omega W_{2n-1}$ is
null homotopic and we are done.

It remains to show that $4\Omega\psi$ is null homotopic. For this
we use another obstruction theory argument. By Theorem~\ref{BWn}~(d),
there is a homotopy pullback
\[\diagram
     W_{n}\rto^-{g}\dto^{\Omega\nu}
        & S^{2n-1}\rto^-{E^{2}}\dto^{E}
        & \Omega^{2} S^{2n+1}\ddouble \\
     \Omega^{3} S^{4n+1}\rto^-{\Omega P}\dto^{\Omega\phi}
        & \Omega S^{2n}\rto^-{\Omega E}\dto^{H}
        & \Omega^{2} S^{2n+1} \\
      \Omega S^{4n-1}\rdouble & \Omega S^{4n-1}. &
  \enddiagram\]
The composite
\(\nameddright{S^{2n-1}}{\underline{2}}{S^{2n-1}}{E}{\Omega S^{2n}}\)
is homotopic to the composite
\(\nameddright{S^{2n-1}}{E}{\Omega S^{2n}}{2}{\Omega S^{2n}}\).
Thus the fibration diagrams
\[\diagram
      W_{n}\rto^-{g}\dto^{\psi}
         & S^{2n-1}\rto^-{E^{2}}\dto^{\underline{2}}
         & \Omega^{2} S^{2n+1}\dto^{2}
         & & W_{n}\rto^-{g}\dto^{\Omega\nu}
         & S^{2n-1}\rto^-{E^{2}}\dto^{E}
         & \Omega^{2} S^{2n+1}\ddouble \\
      W_{n}\rto^-{g}\dto^{\Omega\nu} & S^{2n-1}\rto^-{E^{2}}\dto^{E}
         & \Omega^{2} S^{2n+1}\ddouble
         & & \Omega^{3} S^{4n+1}\rto^-{\Omega P}\dto^{2}
         & \Omega S^{2n}\rto^-{\Omega E}\dto^{2}
         & \Omega^{2} S^{2n+1}\dto^{2} \\
      \Omega^{3} S^{4n+1}\rto^-{\Omega P} & \Omega S^{2n}\rto^-{\Omega E}
         & \Omega^{2} S^{2n+1}
         & & \Omega^{3} S^{4n+1}\rto^-{\Omega P}
         & \Omega S^{2n}\rto^-{\Omega E} & \Omega^{2} S^{2n+1}
  \enddiagram\]
give two choices of a morphism of homotopy fibrations which covers
the $2^{nd}$-power map on $\Omega^{2} S^{2n+1}$. The obstructions
to these morphisms being fiber homotopic lie in
$H^{2n-1}(S^{2n-1};\pi_{2n-1}(\Omega^{3} S^{4n+1}))$. Since
$\pi_{2n-1}(\Omega^{3} S^{4n+1})=0$ by connectivity, the
obstructions vanish. Hence $\Omega\nu\circ\psi\simeq 2\circ\Omega\nu$.
By Theorem~\ref{BWn}~(c), $2\circ\Omega\nu$ is null homotopic,
so there is a lift
\[\diagram
     & W_{n}\dto^{\psi}\dldashed|>\tip_{l} & \\
     \Omega^{2} S^{4n-1}\rto^-{\Omega i} & W_{n}\rto^-{\Omega\nu}
         & \Omega^{3} S^{4n+1}
  \enddiagram\]
for some map $l$. The connecting map for the fibration along the
lower row is $\Omega^{2}\phi$. By Theorem~\ref{BWn}~(b),
$\phi\circ E^{2}$ is homotopic to the degree~$2$ map on $S^{4n-1}$. Thus
$\Omega i\circ\Omega^{2}\underline{2}$ is null homotopic. Now multiply
by $2$. By Theorem~\ref{loop2facts}~(b), $4\simeq2\Omega^{2}\underline{2}$,
and so $4\Omega i\simeq 2(\Omega i\circ\Omega^{2}\underline{2})$ is null
homotopic. Thus $\Omega i$ has order~$4$ and so $4\circ\psi$ is null
homotopic. Hence $4\Omega\psi$ is null homotopic, as required.
$\qqed$

\section{The construction of the space \anick} 
\label{sec:proof} 

In this section we prove Theorem~\ref{main}, deferring some details 
to later. We begin by defining a space and determining some maps 
which will establish the context in which to work. 

Define the space $Y$ and the map $t$ by the homotopy fibration 
\[\nameddright{Y}{t}{\Omega^{2} S^{2n+1}}{S}{\Omega^{2} S^{4n+1}\{2\}}\] 
where $S$ is the map appearing in Theorem~\ref{improvedR}. 

\begin{lemma} 
   \label{Yfib} 
   There is a homotopy fibration 
   \(\nameddright{S^{2n-1}}{f}{Y}{g}{\Omega W_{2n}}\) 
   for some map $g$, with the property that the composite 
   \(\nameddright{S^{2n-1}}{f}{Y}{t}{\Omega^{2} S^{2n+1}}\) 
   is homotopic to $E^{2}$. 
\end{lemma}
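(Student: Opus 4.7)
The plan is to identify $Y$ with the homotopy fiber of $\Omega E^{2}\circ h\colon \Omega S^{2n}\to\Omega^{3}S^{4n+1}$, and then to extract the desired fibration by a second application of the fiber-of-a-composite sequence. The only substantive step is the identification of the connecting map as $\Omega E^{2}\circ h$; this is precisely what the first commutative diagram in Theorem~\ref{improvedR} was built to provide.

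Let $\iota\colon\Omega^{2}S^{4n+1}\{2\}\to\Omega^{2}S^{4n+1}$ be the fiber inclusion of the $2$-power map, and let $q\colon\Omega^{3}S^{4n+1}\to\Omega^{2}S^{4n+1}\{2\}$ be the fiber inclusion of $\iota$. By the defining property of $S$ one has $\iota\circ S\simeq\Omega H$, whose homotopy fiber is $\Omega S^{2n}$ included via $\Omega E$. The standard fiber sequence of a composition, applied to $S$ and $\iota$, then yields a homotopy fibration
\[\nameddright{Y}{\widetilde{t}}{\Omega S^{2n}}{\kappa}{\Omega^{3}S^{4n+1}}\]
in which $\widetilde{t}$ is a lift of $t$ through $\Omega E$, and $\kappa$ is characterized by $q\circ\kappa\simeq S\circ\Omega E$.

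The first diagram in Theorem~\ref{improvedR} reads exactly $S\circ\Omega E\simeq q\circ\Omega E^{2}\circ h$, so one may take $\kappa=\Omega E^{2}\circ h$; hence $Y$ is homotopy equivalent to the homotopy fiber of $\Omega E^{2}\circ h$. Applying the fiber-of-a-composite sequence a second time, to $\lnameddright{\Omega S^{2n}}{h}{\Omega S^{4n-1}}{\Omega E^{2}}{\Omega^{3}S^{4n+1}}$, and using that the fiber of $h$ is $S^{2n-1}$ included via $E$ (which is the Hopf-fibration condition on $h$ built into the statement of Theorem~\ref{improvedR}) while the fiber of $\Omega E^{2}$ is $\Omega W_{2n}$ by definition of $W_{2n}$, one extracts a homotopy fibration $\nameddright{S^{2n-1}}{f}{Y}{g}{\Omega W_{2n}}$ as required.

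Finally, for the last assertion: by construction of the second composite-fiber decomposition, the composite $\lnameddright{S^{2n-1}}{f}{Y}{\widetilde{t}}{\Omega S^{2n}}$ is the fiber inclusion $E$ of $h$. Therefore $t\circ f\simeq \Omega E\circ\widetilde{t}\circ f\simeq \Omega E\circ E\simeq E^{2}$, completing the argument.
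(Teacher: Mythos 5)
Your argument is correct and is essentially the paper's own proof: the paper likewise uses the left diagram of Theorem~\ref{improvedR} to identify $Y$ with the homotopy fiber of $\Omega E^{2}\circ h$ (packaged there as a homotopy pullback diagram), then extracts the fibration $S^{2n-1}\to Y\to \Omega W_{2n}$ from the fibers of $h$ and of $\Omega E^{2}$, and finally gets $t\circ f\simeq E^{2}$ (the paper via a degree-one-in-$H_{2n-1}$ argument rather than your lift $\widetilde{t}$, which amounts to the same thing). The only caution is the phrase ``$\kappa$ is characterized by $q\circ\kappa\simeq S\circ\Omega E$'': that relation alone does not determine $\kappa$ or its homotopy fiber (maps differing by a lift through the fiber of $q$ satisfy it equally well), so the substitution $\kappa=\Omega E^{2}\circ h$ really rests on the square from Theorem~\ref{improvedR} being a map of the two vertical fibration sequences over the identity of $\Omega^{2}S^{4n+1}$, hence a homotopy pullback --- which is precisely the assertion the paper itself makes at the same point, so this is a shared elision rather than a defect of your route.
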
 

\begin{proof} 
The left diagram in Theorem~\ref{improvedR} implies that there is a 
homotopy pullback diagram  
\[\diagram 
     Y\rto\ddouble 
       & \Omega S^{2n}\rto^-{\Omega E^{2}\circ h}\dto^{\Omega E} 
       & \Omega^{3} S^{2n+1}\dto \\ 
     Y\rto^-{t} & \Omega^{2} S^{2n+1}\rto^-{S}\dto^{\Omega H} 
       & \Omega^{2} S^{4n+1}\{2\}\dto \\ 
     & \Omega^{2} S^{4n+1}\rdouble & \Omega^{2} S^{4n+1}. 
  \enddiagram\] 
The fact that $Y$ is the homotopy fiber of the composite 
$\Omega E^{2}\circ h$ implies there is a homotopy fibration diagram 
\[\diagram 
     S^{2n-1}\rto^-{f}\ddouble & Y\rto^-{g}\dto^{t} 
        & \Omega W_{2n}\dto \\ 
     S^{2n-1}\rto^-{E} & \Omega S^{2n}\rto^-{h}\dto^{\Omega E^{2}\circ h} 
        & \Omega S^{4n-2}\dto^{\Omega E^{2}} \\ 
     & \Omega^{3} S^{4n+1}\rdouble & \Omega^{3} S^{4n+1}  
  \enddiagram\] 
which defines the maps $f$ and $g$ proves the existence of the asserted 
homotopy fibration. Since $t$ is degree one in $H_{2n-1}(\ )$, so 
is $t\circ f$, and so $t\circ f\simeq E^{2}$.   
\end{proof} 

\begin{remark} 
At odd primes one can do a little better. The corresponding map 
\(\namedright{\Omega^{2} S^{2n+1}}{S}{\Omega^{2} S^{2np+1}\{p\}}\) 
is known to factor through a map 
\(\namedright{BW_{n}}{\overline{S}}{\Omega^{2} S^{2np+1}\{p\}}\) 
whose homotopy fiber is $\Omega W_{np}$, which implies there is an  
analogous homotopy fibration 
\(\nameddright{S^{2n-1}}{}{Y}{}{\Omega W_{np}}\). 
Such a factorization of $S$ is not known at the prime $2$, but 
Lemma~\ref{Yfib} says that the homotopy fibration for~$Y$ exists 
for other reasons.  
\end{remark} 

The idea behind the proof of Theorem~\ref{main} is to find 
successive lifts 
\begin{equation} 
  \label{twolifts} 
  \diagram 
      & & \Omega^{2} S^{2n+1}\ddto^{2^{r}}
            \ddlto^{\ell_{1}}\ddllto_{\ell_{2}} \\ 
      & & \\ 
      S^{2n-1}\rto^-{f} & Y\rto^-{t} & \Omega^{2} S^{2n+1}  
  \enddiagram 
\end{equation} 
where $\ell_{1}$ exists for $r\geq 2$ and $\ell_{2}$ exists 
for $r\geq 3$. The two lifts are constructed as consequences 
of certain extensions. 

To describe how $\ell_{1}$ comes about we need to introduce several 
new spaces and maps. The Moore space \moore\ is the homtopy cofiber 
of the degree $2^{r}$ map on $S^{2n}$. Consider the pinch map 
\(\namedright{\moore}{q}{S^{2n+1}}\) 
onto the top cell. This factors as the composite 
\(\nameddright{\moore}{i}{\curly}{}{S^{2n+1}}\) 
where $i$ is the inclusion of the bottom Moore space. The 
factorization determines a homotopy pullback diagram 
\begin{equation} 
  \label{EFdgrm} 
  \diagram  
       E\rto\ddouble & F\rto^-{\imath}\dto & \Omega S^{2n+1}\dto \\ 
       E\rto & \moore\rto^-{i}\dto^{q} 
         & \curly\dto \\ 
       & S^{2n+1}\rdouble & S^{2n+1} 
  \enddiagram 
\end{equation} 
which defines the spaces $E$ and $F$ and the map $\imath$. Taking 
vertical connecting maps gives a factorization 
of $\Omega\underline{2}^{r}$ on $\Omega S^{2n+1}$ through $\imath$.  
This induces an extended homotopy pullback diagram 
\begin{equation} 
  \label{EFdgrm2} 
  \diagram 
     \Omega^{2} S^{2n+1}\rto\ddouble & \Omega\curly\rto\dto 
        & \Omega S^{2n+1}\rto^-{\Omega\underline{2}^{r}}\dto 
        & \Omega S^{2n+1}\ddouble \\  
     \Omega^{2} S^{2n+1}\rto^-{\partial_{E}} & E\rto 
        & F\rto^-{\imath} & \Omega S^{2n+1} 
  \enddiagram 
\end{equation} 
which defines the map $\partial_{E}$. In~\cite{T} it was shown 
that in the analogous odd primary case there is an extension 
\[\diagram 
      \Omega^{2} S^{2n+1}\rto^-{\partial_{E}}\dto^{S} 
          & E\dldashed|>\tip^-{e_{1}} \\ 
      \Omega^{2} S^{2np+1}\{p\}  & 
  \enddiagram\] 
for some map $e_{1}$. However, such an extension cannot exist 
in the $2$-primary case due to an obstruction of order~$2$, as 
will be described in Section~\ref{sec:deferredproofs}. Instead, 
we have to modify by introducing an extra factor of $2$.   

Let $\overline{\imath}$ be the composite 
\(\overline{\imath}\colon\nameddright{F}{\imath}{\Omega S^{2n+1}} 
    {\Omega\underline{2}}{\Omega S^{2n+1}}\). 
Define the space $\overline{E}$ by the homotopy pullback diagram 
\[\diagram 
     E\rto\ddouble & \overline{E}\rto\dto 
        & \Omega S^{2n+1}\{\underline{2}\}\dto \\ 
     E\rto & F\rto^-{\imath}\dto^-{\overline{\imath}}  
        & \Omega S^{2n+1}\dto^{\Omega\underline{2}} \\ 
     & \Omega S^{2n+1}\rdouble & \Omega S^{2n+1}. 
  \enddiagram\] 
To see the effect on $\partial_{E}$, observe that the factorization 
of the map 
\(\namedright{E}{}{F}\) 
through $\overline{E}$ implies there is a homotopy pullback diagram 
\[\diagram 
     \Omega^{2} S^{2n+1}\rto^-{\Omega^{2}\underline{2}}\dto^{\partial_{E}} 
        & \Omega^{2} S^{2n+1}\rto\dto^{\partial_{\,\overline{E}}} 
        & \Omega\curly\ddouble \\  
     E\rto\dto & \overline{E}\rto\dto & \Omega\curly \\ 
     F\rdouble & F & 
  \enddiagram\] 
which defines the map $\partial_{\,\overline{E}}$. We prove the 
following. 

\begin{proposition} 
   \label{Ebarextension} 
   If $r\geq 2$, there is an extension 
   \[\diagram 
         \Omega^{2} S^{2n+1}\rto^-{\partial_{\,\overline{E}}}\dto^{S} 
             & \overline{E}\dldashed|>\tip^-{e_{1}} \\ 
         \Omega^{2} S^{4n+1}\{2\}  & 
     \enddiagram\] 
   for some map $e_{1}$. 
\end{proposition}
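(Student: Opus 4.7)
The strategy is to convert the potentially non-trivial obstruction to extending $S$ over $\partial_{E}$ into a null-homotopic obstruction over $\partial_{\overline{E}}$, by exploiting the extra factor of $\Omega\underline{2}$ built into $\overline{\imath}=\Omega\underline{2}\circ\imath$. Extending the fibration
$\overline{E}\to F\stackrel{\overline{\imath}}{\to}\Omega S^{2n+1}$
one step to the left gives
$\Omega F\stackrel{-\Omega\overline{\imath}}{\longrightarrow}\Omega^{2}S^{2n+1}\stackrel{\partial_{\overline{E}}}{\longrightarrow}\overline{E}$,
so the lift $e_{1}$ exists if and only if
\[S\circ\Omega^{2}\underline{2}\circ\Omega\imath\colon\Omega F\longrightarrow\Omega^{2}S^{4n+1}\{2\}\]
is null homotopic. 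I would decompose this obstruction using the $H$-space arithmetic on $\Omega^{2}S^{2n+1}$: writing $\Omega^{2}\underline{2}\simeq 2+(\Omega^{2}\underline{2}-2)$ gives
\[S\circ\Omega^{2}\underline{2}\circ\Omega\imath\;\simeq\;2\cdot(S\circ\Omega\imath)\;+\;S\circ(\Omega^{2}\underline{2}-2)\circ\Omega\imath,\]
and the proof reduces to showing both summands vanish.

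The second summand is formal. By column~$3$ of~(\ref{EFdgrm2}) there is a map $\sigma\colon\Omega S^{2n+1}\to F$ with $\imath\circ\sigma\simeq\Omega\underline{2}^{r}$, which loops to $\Omega\imath\circ\Omega\sigma\simeq\Omega^{2}\underline{2}^{r}$. By Theorem~\ref{loop2facts}(b) the composite $(\Omega^{2}\underline{2}-2)\circ\Omega^{2}\underline{2}^{r}=\Omega^{2}\underline{2}^{r+1}-2\cdot\Omega^{2}\underline{2}^{r}$ is null homotopic, so $(\Omega^{2}\underline{2}-2)\circ\Omega\imath$ vanishes on the image of $\Omega\sigma$; it also vanishes on the fibration-fiber $\Omega E$ of $\Omega\imath$, where $\Omega\imath$ itself is null. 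A short argument with the principal loop fibration
$\Omega E\to\Omega F\stackrel{\Omega\imath}{\to}\Omega^{2}S^{2n+1}$
then propagates the vanishing to all of $\Omega F$, so the second summand is null after composing with~$S$.

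For the first summand, $2\cdot(S\circ\Omega\imath)$, the claim is precisely that the naive $\partial_{E}$-obstruction has order~$2$, the phenomenon explicitly flagged in the paragraph preceding the proposition. I would establish this in Section~\ref{sec:deferredproofs} by using the explicit structure of $S$ afforded by Theorem~\ref{improvedR}, which factors $S\circ\Omega E$ through the fiber inclusion $\Omega^{3}S^{4n+1}\hookrightarrow\Omega^{2}S^{4n+1}\{2\}$, to identify $S\circ\Omega\imath$ with a map whose essential component lies in a $2$-torsion subgroup. The hypothesis $r\geq 2$ enters here: the $2^{r}$-divisibility in the Moore space $P^{2n+1}(2^{r})$ underlying $F$ is what confines the obstruction to the $2$-torsion summand, and this control fails for $r=1$. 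This last step is the main obstacle of the proof; the reduction of the obstruction in the second summand is, by comparison, a formal application of Theorem~\ref{loop2facts}(b).
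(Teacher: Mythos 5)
Your proposal fails at its first step. The sequence \(\nameddright{\Omega F}{\Omega\overline{\imath}}{\Omega^{2} S^{2n+1}}{\partial_{\,\overline{E}}}{\overline{E}}\) is a homotopy \emph{fibration} sequence, not a cofibration sequence, so for maps \emph{out} of these spaces there is no exactness: the vanishing of $S\circ\Omega\overline{\imath}$ is certainly necessary for an extension $e_{1}$ with $e_{1}\circ\partial_{\,\overline{E}}\simeq S$, but it is nowhere near sufficient, and your ``if and only if'' is false in the direction you need. If extensions across connecting maps could be produced this way, the entire machinery of~\cite{GT} and~\cite{T} (the ``new method for constructing certain extensions'' that the introduction is built on) would be unnecessary. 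A second, independent problem is your distribution step: writing $\Omega^{2}\underline{2}\simeq 2+(\Omega^{2}\underline{2}-2)$ is fine, but composing with $S$ on the \emph{left} distributes over this sum only if $S$ is an $H$-map. At odd primes the analogous $S$ is an $H$-map, which is exactly why the odd-primary argument is easier; at $p=2$ this is not available, and the homology computation surrounding~(\ref{majorobstr}) shows that $S$ fails to respect the relevant multiplication even homologically. The subsequent ``propagation'' over $\Omega F$ suffers from the same non-exactness: a map out of the total space of a fibration is not determined by, nor forced to vanish by, its behaviour on the fibre and on the image of another map.

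The paper's actual proof is an inductive pushout construction rather than a single obstruction computation. One filters $F$ by its skeleta $F_{k}$ (one cell in each degree $2nk$), pulls back to a filtration $\overline{E}_{k}$ of $\overline{E}$ with $\overline{E}_{0}\simeq\Omega^{2} S^{2n+1}$, and realizes each $\overline{E}_{k}$ as a homotopy pushout of $\overline{\theta}_{k-1}\colon S^{2nk-1}\times\Omega^{2} S^{2n+1}\rightarrow\overline{E}_{k-1}$ against the projection $\pi_{2}$. The base case $k=1$ is exactly where the degree-$2$ twist in $\overline{\imath}$ earns its keep: $\overline{\theta}_{0}\simeq(E^{2}\circ\underline{2})\cdot 1$, and Lemma~\ref{k=1case} shows $S\circ((E^{2}\circ\underline{2})\cdot 1)\simeq S\circ\pi_{2}$, which is precisely the square that fails without the $\underline{2}$, so $S$ extends over $\overline{E}_{1}$. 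For $k>1$ one uses Lemma~\ref{2rSamelson} (this is where $r\geq 2$ enters, via mod-$2^{r}$ Samelson products in $\Omega\moore$ composing trivially into $\Omega\curly$) to factor $\overline{\theta}_{k-1}$ through $\underline{2}^{r}\times 1$, and then the fact that $\Omega^{2} S^{4n+1}\{2\}$ is a homotopy associative $H$-space with null homotopic $4^{th}$-power map~\cite{N} allows the extension lemma~\cite[4.3]{T} or~\cite[2.3]{GT} to produce $\epsilon_{k}$ extending $\epsilon_{k-1}$; passing to the limit gives $e_{1}$. None of these ingredients appear in your outline, and the step you defer as ``the main obstacle'' is not a torsion estimate on a single composite but the genuine content of the construction.
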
 

The proof is a modification of~\cite[2.1]{T} and will be described 
more fully in Section~\ref{sec:deferredproofs}. In~(\ref{EFdgrm2}) 
we saw that $\partial_{E}$ factors through $\Omega\curly$. The switch 
to $\overline{E}$ has the effect of factoring $\partial_{\,\overline{E}}$ 
through $\Omega\curlyplus$. To see this, observe that the factorization of 
\(\namedright{\Omega S^{2n+1}}{\Omega\underline{2}^{r}}{\Omega S^{2n+1}}\) 
through $\imath$ in~(\ref{EFdgrm2}) together with the extra copy of 
$\Omega\underline{2}$ in the definition of $\overline{\imath}$ implies 
that there is an extended homotopy pullback diagram 
\begin{equation} 
  \label{omegadgrm} 
  \diagram 
     & \Omega\moore\rdouble\dto^{\omega} 
          & \Omega\moore\dto^{\Omega q} & \\ 
     \Omega^{2} S^{2n+1}\rto\ddouble & \Omega\curlyplus\rto\dto  
          & \Omega S^{2n+1}\rto^-{\Omega\underline{2}^{r+1}}\dto 
          & \Omega S^{2n+1}\ddouble \\ 
     \Omega^{2} S^{2n+1}\rto^-{\partial_{\,\overline{E}}} 
          & \overline{E}\rto 
          & F\rto^-{\overline{\imath}} & \Omega S^{2n+1}  
  \enddiagram 
\end{equation}  
which defines the map $\omega$. Note that the restriction of $\omega$ 
to the bottom Moore space $P^{2n}(2^{r})$ is degree one on the top 
cell and degree~$2$ on the bottom cell. 

Making use of the factorization of $\partial_{\,\overline{E}}$ through 
$\Omega\curlyplus$, let $e^{\prime}_{1}$ be the composite 
\[e^{\prime}_{1}\colon\nameddright{\Omega\curlyplus}{}{\overline{E}} 
     {e_{1}}{\Omega^{2} S^{4n+1}\{2\}}.\]  
Then Proposition~\ref{Ebarextension} implies that there is a homotopy 
commutative square 
\[\diagram 
     \Omega^{2} S^{2n+1}\rto\dto^{S} 
        & \Omega\curlyplus\dto^-{e^{\prime}_{1}} \\ 
     \Omega^{2} S^{4n+1}\{2\}\rdouble & \Omega^{2} S^{4n+1}\{2\}. 
  \enddiagram\]  
This square determines an extended homotopy pullback diagram 
\begin{equation} 
  \label{l1dgrm} 
  \diagram 
     \Omega^{2} S^{2n+1}\rto^-{\ell_{1}}\ddouble & Y\rto\dto^{t}  
        & X\rto^-{q_{X}}\dto & \Omega S^{2n+1}\ddouble \\ 
     \Omega^{2} S^{2n+1}\rto^-{\Omega^{2}\underline{2}^{r+1}} 
        & \Omega^{2} S^{2n+1}\rto\dto^{S} 
        & \Omega\curlyplus\rto\dto^{e^{\prime}_{1}} 
        & \Omega S^{2n+1} \\ 
     & \Omega^{2} S^{2np+1}\{p\}\rdouble & \Omega^{2} S^{2np+1}\{p\} &   
  \enddiagram 
\end{equation} 
which defines the space $X$ and the maps $q_{X}$ and $\ell_{1}$. In 
particular, by Theorem~\ref{loop2facts}~(b) we have  
$\Omega^{2}\underline{2}^{r+1}\simeq 2^{r+1}$, 
and so $\ell_{1}$ is a choice of a lift of the $2^{r+1}$-power map with the 
additional property that it is a connecting map of a homotopy fibration. 

Further, the factorization of $e^{\prime}_{1}$ through $\overline{E}$ 
determines a homotopy pullback diagram 
\begin{equation} 
  \label{omegaXdgrm} 
  \diagram 
      \Omega\moore\rto^-{\omega_{X}}\ddouble & X\rto\dto & R\dto \\ 
      \Omega\moore\rto^-{\omega}  
         & \Omega\curlyplus\rto\dto^-{e^{\prime}_{1}} 
         & \overline{E}\dto^{e_{1}} \\ 
      & \Omega^{2} S^{4n+1}\{2\}\rdouble & \Omega^{2} S^{4n+1}\{2\}   
  \enddiagram 
\end{equation}  
which defines the space $R$ and the map $\omega_{X}$. Note that the 
factorizations of $\omega$ in~(\ref{omegaXdgrm}) and~(\ref{omegadgrm}) 
imply that the composite 
\(\nameddright{\Omega\moore}{\omega_{X}}{X}{q_{X}}{\Omega S^{2n+1}}\) 
is homotopic to~$\Omega q$. 

Consider the homotopy fibration 
\(\nameddright{Y}{}{X}{q_{X}}{\Omega S^{2n+1}}\) 
and the map 
\(\namedright{Y}{g}{\Omega W_{2n}}\) 
from Lemma~\ref{Yfib}. 

\begin{proposition} 
   \label{Xextension}  
   If $r\geq 2$, there is an extension 
   \[\diagram  
         Y\rto\dto^{g} & X\dldashed|>\tip^-{e_{2}} \\
         \Omega W_{2n} &
     \enddiagram\]  
   for some map $e_{2}$. 
\end{proposition}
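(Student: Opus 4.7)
The plan is to show that the obstruction to extending $g$ along the homotopy fibration $Y\to X\stackrel{q_X}{\to}\Omega S^{2n+1}$ from the top row of~(\ref{l1dgrm}) vanishes. This obstruction is the composite
\[g\circ\ell_1\colon \Omega^2 S^{2n+1}\stackrel{\ell_1}{\to} Y\stackrel{g}{\to}\Omega W_{2n}.\]
The leftmost square of~(\ref{l1dgrm}) identifies $\ell_1$ as a lift of $\Omega^2\underline{2}^{r+1}\simeq 2^{r+1}$ (Theorem~\ref{loop2facts}(b)) through $t\colon Y\to\Omega^2 S^{2n+1}$. On the other hand, the proof of Lemma~\ref{Yfib} builds $g$ as a lift of $h\circ\tilde{t}\colon Y\to\Omega S^{4n-1}$ through the fibration $\Omega W_{2n}\to\Omega S^{4n-1}\stackrel{\Omega E^2}{\to}\Omega^3 S^{4n+1}$, where $\tilde{t}$ denotes the lift of $t$ through $\Omega E$ arising from the left diagram in Theorem~\ref{improvedR}.

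I would first analyze the composite $\Omega^2 S^{2n+1}\stackrel{g\circ\ell_1}{\to}\Omega W_{2n}\to\Omega S^{4n-1}$, which by construction equals $h\circ\tilde{t}\circ\ell_1$. Since $\tilde{t}\circ\ell_1$ is itself a lift of $2^{r+1}$ through $\Omega E$, and Corollary~\ref{Richter} records the identity $\Omega E^2\circ h\circ P\simeq 2$ linking $h$ to powers of $2$, I expect this composite to admit a compatible lift to the fiber $\Omega^3 S^{4n+1}$. This places $g\circ\ell_1$ in a controlled class in $\Omega W_{2n}$ via the connecting map $\Omega^3 S^{4n+1}\to\Omega W_{2n}$ of the fibration appearing in Theorem~\ref{BWn}, with indeterminacy governed by $[\Omega^2 S^{2n+1},\Omega^3 S^{4n+1}]$.

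The decisive step would invoke Theorem~\ref{Wnpower}: the $4^{th}$-power map on $\Omega W_{2n}$ is null homotopic. The hypothesis $r\geq 2$ makes $2^{r+1}$ divisible by $4$, so any $2^{r+1}$-divisible contribution to $g\circ\ell_1$ is killed. The main obstacle I expect is making this divisibility precise at the level of maps into $\Omega W_{2n}$, since $\ell_1$ is not manifestly an $H$-map on $\Omega^2 S^{2n+1}$ and the naive identification $g\circ\ell_1\simeq 2^{r+1}\cdot(\cdot)$ is not immediate. I would resolve this by exploiting the pullback description of $X$ from~(\ref{omegaXdgrm}), in particular the map $\omega_X\colon\Omega\moore\to X$ with $q_X\circ\omega_X\simeq\Omega q$, to split the obstruction into contributions on the Moore-space part and its residual, on each of which the $H$-space structure on $\Omega W_{2n}$ together with the $4^{th}$-power nullity can be applied directly. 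This follows the same spirit as the obstruction-theoretic arguments in the proof of Theorem~\ref{Wnpower}.
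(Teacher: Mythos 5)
There is a genuine gap at the very first step: you reduce the extension problem to showing that the composite
\(\nameddright{\Omega^{2} S^{2n+1}}{\ell_{1}}{Y}{g}{\Omega W_{2n}}\)
vanishes. Vanishing of $g\circ\ell_{1}$ is a \emph{necessary} condition (if $e_{2}$ exists then $g\circ\ell_{1}\simeq e_{2}\circ j\circ\ell_{1}\simeq\ast$, since $j\circ\ell_{1}$ consists of consecutive maps in the fibration sequence), but it is nowhere near sufficient. The inclusion of the fiber $Y$ into $X$ is not a cofibration-like map whose cofiber is controlled by $\ell_{1}$: if $g\circ\ell_{1}\simeq\ast$ you only get an extension of $g$ over the mapping cone of $\ell_{1}$, and the canonical map from that mapping cone to $X$ is far from an equivalence because the base $\Omega S^{2n+1}$ has cells in infinitely many dimensions. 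Extending over $X$ requires killing obstructions at every stage of a filtration of the base, and those obstruction groups are not zero for connectivity reasons, so no one-shot argument of the kind you sketch can work. Your second paragraph also leaves the actual vanishing of $g\circ\ell_{1}$ unproved: the ``divisibility'' of $g\circ\ell_{1}$ by $2^{r+1}$ would have to be a divisibility of maps (a factorization through a power map on the $H$-space $\Omega W_{2n}$ or a degree map on the source), and, as you yourself note, $\ell_{1}$ is not an $H$-map, so this is exactly the point that needs an argument rather than an appeal to Theorem~\ref{Wnpower}.

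The paper's proof is structured precisely to avoid this trap. One filters $X$ over the James filtration $\jk$ of $\Omega S^{2n+1}$, with $X_{0}\simeq Y$ and $X=\varinjlim X_{k}$, and shows (following \cite[6.1, 7.3]{T}) that each $X_{k}$ is a homotopy pushout of $X_{k-1}$ and $Y$ along $S^{2nk-1}\times Y$, where the attaching map $\theta^{\prime}_{k-1}$ has the crucial property that $\Sigma\theta^{\prime}_{k-1}$ restricted to $\Sigma(S^{2nk-1}\rtimes\Omega^{2}S^{2n+1})$ is divisible by $2^{r}$. Starting from $\epsilon^{\prime}_{0}=g$, the extension lemma of \cite[4.3]{T} or \cite[2.3]{GT} then produces compatible extensions $\epsilon^{\prime}_{k}\colon\namedright{X_{k}}{}{\Omega W_{2n}}$ stage by stage; this lemma is exactly where Theorem~\ref{Wnpower} enters, since it requires the target to be a homotopy associative $H$-space whose $4^{th}$-power map is null homotopic, together with $r\geq 2$. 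The map $e_{2}$ is the colimit of the $\epsilon^{\prime}_{k}$. In short, the content of the proposition is the iterative pushout-extension machinery with the $2^{r}$-divisibility of the attaching maps, not the vanishing of a single connecting-map composite; your proposal replaces the former with the latter and so does not prove the statement.
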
 

Proposition~\ref{Xextension} implies that for $r\geq 2$ there 
is an extended homotopy pullback diagram    
\begin{equation} 
  \label{Tpb} 
  \diagram 
     \Omega^{2} S^{2n+1}\rto^-{\ell_{2}}\ddouble 
        & S^{2n-1}\rto\dto^{f} & \anickplus\rto\dto 
        & \Omega S^{2n+1}\ddouble \\ 
     \Omega^{2} S^{2n+1}\rto^-{\ell_{1}} & Y\rto\dto^{g} 
        & X\rto^-{q_{X}}\dto & \Omega S^{2n+1} \\ 
     & \Omega W_{np}\rdouble & \Omega W_{np} & 
  \enddiagram 
\end{equation}  
which defines the space \anickplus\ and the map $\ell_{2}$. 

\medskip\noindent 
\textit{Proof of Theorem~\ref{main}}: 
The top row in~(\ref{Tpb}) is the asserted homotopy fibration, 
which exists if $r+1\geq 3$. Relabel $\ell_{2}$ as $\varphi_{r}$. 
All that is left to check is that $\varphi_{r}$ has the property 
that $E^{2}\circ\varphi_{r}$ is homotopic to the $2^{r+1}$-power 
map on $\Omega^{2} S^{2n+1}$. This follows from juxtaposing the 
squares of connecting maps in~(\ref{l1dgrm}) and~(\ref{Tpb}), 
giving a homotopy commutative diagram 
\[\diagram 
     \Omega^{2} S^{2n+1}\rto^-{\varphi_{r}}\ddouble & S^{2n-1}\dto^{f} \\ 
     \Omega^{2} S^{2n+1}\rto^-{\ell_{1}}\ddouble & Y\dto^{t} \\ 
     \Omega^{2} S^{2n+1}\rto^-{2^{r+1}} & \Omega^{2} S^{2n+1},   
  \enddiagram\] 
and noting that by Lemma~\ref{Yfib} the composite $t\circ f$ is 
homotopic to $E^{2}$. 
$\qqed$

\section{Deferred proofs} 
\label{sec:deferredproofs} 

Two stumbling blocks arise when adapting the odd primary extension 
of $S$ through $\partial_{E}$ in~\cite[Prop 5.1]{T} to the $2$-primary 
case. One is minor and one is major. 

The minor stumbling block is that \curly\ is almost never an 
$H$-space while in the odd primary case $S^{2n+1}\{p^{r}\}$ is 
always an $H$-space. This was used to show that certain Samelson 
products on $\Omega P^{2n+1}(p^{r})$ compose trivially into 
$\Omega S^{2n+1}\{p^{r}\}$. To recover this in the $2$-primary case 
we have to argue differently. In general, suppose we are given maps 
\(a\colon\namedright{P^{s}(2^{r})}{}{\Omega X}\) 
and 
\(b\colon\namedright{P^{t}(2^{r})}{}{\Omega X}\). 
If $r\geq 2$ then the smash $P^{s}(2^{r})\wedge P^{t}(2^{r})$ 
is homotopy equivalent to the wedge $P^{s+t}(2^{r})\vee P^{s+t-1}(2^{r})$.  
This lets us form the mod-$2^{r}$ Samelson product of $a$ and $b$ 
as the composite 
\[\langle a,b\rangle\colon\nameddright{P^{s+t}(2^{r})}{} 
   {P^{s}(2^{r})\wedge P^{t}(2^{r})}{[a,b]}{\Omega X}\] 
where $[a,b]$ is the ordinary Samelson product of $a$ and $b$. 
In our case, let 
\(\nu\colon\namedright{P^{2n}(2^{r})}{}{\Omega\moore}\) 
be the adjoint of the identity map and let 
\(\mu\colon\namedright{S^{2n-1}}{}{\Omega\moore}\) 
be the inclusion of the bottom cell. Let $\mathfrak{ad}^{0}=\mu$, 
$\mathfrak{ad}^{1}$ be the ordinary Samelson product $[\nu,\mu]$, 
and for $j>1$ let  
\(\mathfrak{ad}^{j-1}:\namedright{P^{2nj-1}(2^{r})}{}{\Omega\moore}\) 
be the mod-$p^{r}$ Samelson product defined inductively by 
$\mathfrak{ad}^{j-1}=\langle\nu,\mathfrak{ad}^{j-2}\rangle$. 

\begin{lemma} 
   \label{2rSamelson} 
   Let $r\geq 2$. For $j>1$ the mod-$2^{r}$ Samelson products 
   \(\namedright{P^{2nj-1}(2^{r})}{\mathfrak{ad}^{j-1}}{\Omega\moore}\) 
   all compose trivially with the map 
   \(\namedright{\Omega\moore}{\Omega i}{\Omega\curly}\). 
\end{lemma}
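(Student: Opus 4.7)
The plan is to exploit the looped fibration
$\nameddright{\Omega^{2} S^{2n+1}}{i_{\ast}}{\Omega\curly}{\Omega\pi}{\Omega S^{2n+1}}$
obtained from
$\nameddright{\Omega S^{2n+1}}{}{\curly}{\pi}{S^{2n+1}}$,
and to kill the composite $\Omega i\circ\mathfrak{ad}^{j-1}$ in two stages: first pushing it down to the base, then handling the resulting lift into the fiber.

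For the first stage, let $q\colon\moore\to S^{2n+1}$ be the pinch map, so that $\Omega\pi\circ\Omega i=\Omega q$. By the naturality of the mod-$2^{r}$ Samelson product --- which is well defined precisely because the hypothesis $r\geq 2$ yields the splitting $P^{s}(2^{r})\wedge P^{t}(2^{r})\simeq P^{s+t}(2^{r})\vee P^{s+t-1}(2^{r})$ --- the composite $\Omega q\circ\mathfrak{ad}^{j-1}$ is the iterated mod-$2^{r}$ Samelson product in $\Omega S^{2n+1}$ of $\Omega q\circ\nu$ and $\Omega q\circ\mu$. The factor $\Omega q\circ\mu\colon S^{2n-1}\to\Omega S^{2n+1}$ is null because it is adjoint to the composite $\nameddright{S^{2n}}{E}{\moore}{q}{S^{2n+1}}$ and $q$ collapses the bottom cell. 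For $j>1$, every $\mathfrak{ad}^{j-1}$ contains at least one bracket factor equal to $\mu$, so $\Omega q\circ\mathfrak{ad}^{j-1}$ degenerates and is null.

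For the second stage, it follows that $\Omega i\circ\mathfrak{ad}^{j-1}$ lifts through $i_{\ast}$ to a map $\lambda\colon P^{2nj-1}(2^{r})\to\Omega^{2} S^{2n+1}$, and one must show $\lambda\simeq\ast$. The crucial observation is that $\Omega i\circ\mu$ factors as
$\nameddright{S^{2n-1}}{E^{2}}{\Omega^{2} S^{2n+1}}{i_{\ast}}{\Omega\curly}$,
so the iterated bracket defining $\mathfrak{ad}^{j-1}$ can, after applying $\Omega i$, be organised as an iterated mod-$2^{r}$ Samelson product computed entirely in $\Omega^{2} S^{2n+1}$. Since $\Omega^{2} S^{2n+1}$ is a homotopy commutative $H$-space, every positive-length Samelson product there vanishes, which forces $\lambda\simeq\ast$.

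The main obstacle is the second stage: one must verify that the indeterminacy in the successive lifts through $i_{\ast}$ can be controlled so that the iterated bracket is genuinely realised in $\Omega^{2} S^{2n+1}$, rather than being defined only up to error terms that could themselves be nontrivial. In the odd primary analog in~\cite{T} this whole detour is avoided because $\oddcurly$ is itself a homotopy commutative $H$-space, making the original Samelson product in $\Omega\oddcurly$ vanish immediately; at the prime $2$, where $\curly$ is not an $H$-space, the passage through the fiber $\Omega^{2} S^{2n+1}$ is essential, and this is where the hypothesis $r\geq 2$ does its real work --- both to define the mod-$2^{r}$ Samelson products and to manage the iterated lifts.
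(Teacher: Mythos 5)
Your first stage is fine: since $\Omega\pi\circ\Omega i=\Omega q$ and $\Omega q\circ\mu$ is adjoint to the null composite \(\nameddright{S^{2n}}{}{\moore}{q}{S^{2n+1}}\), linearity of the mod-$2^{r}$ Samelson product gives $\Omega q\circ\mathfrak{ad}^{j-1}\simeq\ast$ for $j>1$, so $\Omega i\circ\mathfrak{ad}^{j-1}$ lifts through $\Omega^{2} S^{2n+1}$. But the second stage, which is the entire content of the lemma, has a genuine gap. The factor $\Omega i\circ\nu$ does \emph{not} lift to the fiber: $\Omega\pi\circ\Omega i\circ\nu=\Omega q\circ\nu$ is adjoint to the pinch map $q$ itself, which is essential. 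So the iterated bracket cannot be ``organised as an iterated mod-$2^{r}$ Samelson product computed entirely in $\Omega^{2} S^{2n+1}$''; at best you have a mixed (relative) bracket of a fiber class ($\Omega i\circ\mu\simeq i_{\ast}\circ E^{2}$) with a total-space class ($\Omega i\circ\nu$), and homotopy commutativity of the fiber says nothing about such mixed brackets -- in the odd-primary Cohen--Moore--Neisendorfer theory exactly these relative Samelson products are the interesting nontrivial elements, so no formal vanishing argument of this shape can work. You flag this yourself as ``the main obstacle,'' but flagging it is not the same as resolving it: the lift $\lambda$ lands in groups like $[P^{2nj-1}(2^{r}),\Omega^{2}S^{2n+1}]$, which are not zero, so some genuinely new input is needed to kill it.

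The paper supplies that input differently. Since $\Omega i$ is a loop map and Samelson products are linear, everything reduces to showing $\Omega i\circ\mathfrak{ad}^{1}\simeq\ast$, equivalently that the mod-$2^{r}$ Whitehead product $\widetilde{\mathfrak{ad}}^{1}\colon P^{4n}(2^{r})\rightarrow\moore$ composes trivially with $i$. One then constructs a map $\gamma\colon\curly\rightarrow\Omega P^{2n+2}(2^{r})$ (from the fibration induced by the cofibration for $\underline{2}^{r}$ on $S^{2n+1}$) with two properties: $\gamma\circ i$ is homotopic to the suspension $E$, and $\gamma$ is a homotopy equivalence on $4n$-skeletons (a short mod-$2$ homology check). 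Whitehead products suspend trivially, so $\gamma\circ i\circ\widetilde{\mathfrak{ad}}^{1}\simeq\ast$, and since the source has dimension $4n$ the skeletal equivalence lets one strip $\gamma$ off, giving $i\circ\widetilde{\mathfrak{ad}}^{1}\simeq\ast$. If you want to rescue your fibration approach, you would need an argument of comparable strength for the mixed bracket; as written, the decisive step is missing.
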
 

\begin{proof} 
Since $\Omega i$ is a loop map we have 
$\Omega i\circ\mathfrak{ad}^{k-1}\simeq 
  \langle\Omega i\circ\nu,\Omega i\circ\mathfrak{ad}^{k-2}\rangle$. 
Thus if $\Omega i\circ\mathfrak{ad}^{1}$ is null homotopic then 
iteratively so is $\Omega i\circ\mathfrak{ad}^{k-1}$ for each $k>2$. 
To show $\Omega i\circ\mathfrak{ad}^{1}$ is null homotopic it is 
equivalent to adjoint and show that the mod-$2^{r}$ Whitehead product 
\(\namedright{P^{4n}(2^{r})}{\widetilde{\mathfrak{ad}}^{1}}{\moore}\) 
composes trivially with 
\(\namedright{\moore}{i}{\curly}\). 

From the homotopy cofibration 
\(\nameddright{S^{2n+1}}{\underline{2}^{r}}{S^{2n+1}}{}{P^{2n+1}(2^{r})}\) 
we obtain a homotopy fibration diagram 
\[\diagram 
     \Omega S^{2n+1}\rto\dto & \curly\rto\dto^{\gamma} 
         & S^{2n+1}\rto^-{\underline{2}^{r}}\dto & S^{2n+1}\dto \\ 
     \Omega P^{2n+1}(s^{r})\rdouble & \Omega P^{2n+1}(2^{r})\rto 
         & \ast\rto & P^{2n+1}(2^{r}) 
  \enddiagram\] 
which defines the map $\gamma$. The left square implies 
that $\gamma_{\ast}$ is degree one in $H_{2n}(\ )$, and so the Bockstein 
implies that $\gamma_{\ast}$ is the identity on the bottom Moore space. 
That is, the composite 
\(\nameddright{\moore}{i}{\curly}{\gamma}{\Omega P^{2n+1}(2^{r})}\) 
is the inclusion of the bottom Moore space and so it is homotopic 
to the suspension $E$. Any Whitehead product suspends trivially, so 
$E\circ\widetilde{\mathfrak{ad}}^{1}$ is null homotopic. We claim 
that $\gamma$ is a homotopy equivalence on $4n$-skeletons, in which 
case $i\circ\widetilde{\mathfrak{ad}}^{1}$ is null homotopic and we 
are done. 

It suffices to show that $\gamma_{\ast}$ is an isomorphism in mod-$2$ 
homology when restricted to $4n$-skeletons. The Serre spectral sequence 
for the homotopy fibration 
\(\nameddright{\Omega S^{2n+1}}{}{\curly}{}{S^{2n+1}}\)  
collapses, implying that $\hlgy{\curly}\cong\zmodtwo[x]\otimes\Lambda(y)$, 
where the degrees of $x$ and $y$ are $2n$ and $2n+1$ respectively. 
A basis for the homology of the $4n$-skeleton of \curly\ is 
therefore $\{x,y,x^{2}\}$. On the other hand, since $P^{2n+2}(2^{r})$ 
is a suspension, the Bott-Samelson Theorem implies that 
$\hlgy{\Omega P^{2n+1}(2^{r})}\cong T(u,v)$, 
where $T(\ )$ is the free tensor algebra, and $u$ and $v$ have 
degrees $2n$ and $2n+1$ respectively. A basis for the homology of 
the $4n$-skeleton of $\Omega P^{2n+2}(2^{r})$ is therefore 
$\{u,v,u^{2}\}$. Since $\gamma$ is the identity on the bottom 
Moore space, we have $\gamma_{\ast}(x)=u$ and $\gamma_{\ast}(y)=v$. 
The left square in the fibration diagram above also implies that 
$\gamma_{\ast}(x^{2})=u^{2}$. Thus $\gamma_{\ast}$ is an isomorphism 
on the $4n$-skeletons.  
\end{proof} 

The major stumbling block is that to get the inductive argument 
in~\cite[2.1]{T} off the ground requires the existence of a homotopy 
commutative square 
\begin{equation} 
  \label{majorobstr} 
  \diagram 
     S^{2n-1}\times\Omega^{2} S^{2n+1}\rto^-{E^{2}\cdot 1}\dto^{\pi_{2}} 
       & \Omega^{2} S^{2n+1}\dto^{S} \\ 
     \Omega^{2} S^{2n+1}\rto^-{S} & \Omega^{2} S^{4n+1}\{2\}.   
  \enddiagram 
\end{equation}  
The odd primary version, with $S$ mapping to $\Omega^{2} S^{2np+1}\{p\}$, 
does commute because it is known that $S$ is an $H$-map. The $2$-primary 
version in~(\ref{majorobstr}) does not commute because it does not even 
commute in homology. To see this, observe that if $x\in H_{2n-1}(S^{2n-1})$ 
and $a\in H_{2n-1}(\Omega^{2} S^{2n+1})$ are generators, then 
$(E^{2}\cdot 1)_{\ast}(x,a)=a^{2}$. Note that $a^{2}$ is the Bockstein 
of the generator $b\in H_{4n-1}(\Omega^{2} S^{2n+1})$. Under $\Omega H$ 
we have $(\Omega H)_{\ast}(b)=v$, where $v\in H_{4n-1}(\Omega^{2} S^{4n+1})$ 
is a generator. Since $S$ is a lift of $\Omega H$ through 
\(\namedright{\Omega^{2} S^{4n+1}\{2\}}{\Omega\delta}{\Omega^{2} S^{2n+1}}\) 
and $(\Omega\delta)_{\ast}$ is onto, we have $S_{\ast}(b)=\overline{v}$, 
where $(\Omega\delta)_{\ast}(\overline{v})=v$. Now $\beta\overline{v}$ 
is nonzero and generates $H_{4n-2}(\Omega^{2} S^{4n+1}\{2\})$. Thus 
$S_{\ast}(a^{2})=\beta\overline{v}$ and so 
$(S\circ(E^{2}\cdot 1))_{\ast}(x,y)\neq 0$. On the other hand, if the 
diagram above were to commute then we would have 
$(S\circ(E^{2}\cdot 1))_{\ast}(x,y)=0$. 

To get around this obstruction, we introduce a degree $2$ map on $S^{2n-1}$. 
In Lemma~\ref{k=1case} we show that 
$S\circ((E^{2}\circ\underline{2})\cdot 1)$ does factor as $S\circ\pi_{1}$, 
and then we use this as the base case in a modified induction to 
prove Proposition~\ref{Ebarextension}. 

To prepare for Lemma~\ref{k=1case}, we make three preliminary 
observations. First, in general, if $A$ 
and~$B$ are spaces, then there is a homotopy equivalence 
\(e\colon\namedright{\Sigma A\vee\Sigma B\vee(\Sigma A\wedge B)}{\simeq} 
    {\Sigma(A\times B)}\) 
given by $\Sigma i_{1}+\Sigma i_{2}+j$, where $i_{1}$ and $i_{2}$ are 
the inclusions of the left and right factors respectively, and~$j$ is the 
canonical map of a join into the suspension of a product. If $A=B$ 
and $B=\Omega X$ with loop multiplication $\mu$ then the composite 
\(\mu^{\ast}\colon\nameddright{\Sigma\Omega X\wedge\Omega X}{j} 
    {\Sigma(\Omega X\times\Omega X)}{\Sigma\mu}{\Sigma\Omega X}\) 
is the Hopf construction on $\Omega X$ and Ganea~\cite{Ga} showed that 
there is a homotopy fibration 
\(\nameddright{\Sigma\Omega X\wedge\Sigma\Omega X}{\mu^{\ast}} 
     {\Sigma\Omega X}{ev}{X}\). 

The second observation is a well-known factorization of the
$2^{nd}$-power map on $\Omega^{2} S^{2n+1}\{2\}$. Consider the homotopy
cofibration sequence
\(\namedddright{S^{2}}{\underline{2}}{S^{2}}{i}{P^{3}(2)}
   {q}{S^{3}}\),
where $P^{3}(2)$ is the mod-$2$ Moore space, $i$ is the inclusion of
the bottom cell, and $q$ is the pinch onto the top cell. This cofibration
sequence induces a homotopy fibration sequence
\[\namedddright{\mbox{Map}_{\ast}(S^{3},S^{2n+1})}{q^{\ast}}
    {\mbox{Map}_{\ast}(P^{3}(2),S^{2n+1})}{i^{\ast}}
    {\mbox{Map}_{\ast}(S^{2},S^{2n+1})}{\underline{2}^{\ast}}
    {\mbox{Map}_{\ast}(S^{2},S^{2n+1})}.\]
Since $\mbox{Map}_{\ast}(S^{2},S^{2n+1})=\Omega^{2} S^{2n+1}$ and
$\underline{2}^{\ast}$ is the $2^{nd}$-power map on $\Omega^{2} S^{2n+1}$,
this homotopy fibration sequence can be identified with the homotopy
fibration sequence
\[\namedddright{\Omega^{3} S^{2n+1}}{\Omega\partial}{\Omega^{2} S^{2n+1}\{2\}}
    {\Omega\delta}{\Omega^{2} S^{2n+1}}{2}{\Omega^{2} S^{2n+1}}.\]
In particular, $\mbox{Map}_{\ast}(P^{3}(2),S^{2n+1})=\Omega^{2} S^{2n+1}\{2\}$,
and so the $2^{nd}$-power map on $\Omega^{2} S^{2n+1}\{2\}$ is determined
by the degree~$2$ map on $P^{3}(2)$. This factors as the composite
\(\namedddright{P^{3}(2)}{q}{S^{3}}{\eta}{S^{2}}{i}{P^{3}(2)}\),
where $\eta$ is an element of Hopf invariant one. Thus we have the 
following.

\begin{lemma}
   \label{curly2power}
   There is a homotopy commutative diagram
   \[\diagram
        \Omega^{2} S^{2n+1}\{2\}\rrto^-{2}\dto^{\Omega\delta}
            & & \Omega^{2} S^{2n+1}\{2\}\ddouble \\
        \Omega^{2} S^{2n+1}\rto^-{\eta^{\ast}}
            & \Omega^{3} S^{2n+1}\rto^-{\Omega\partial}
            & \Omega^{2} S^{2n+1}\{2\}.
     \enddiagram\]
   $\qqed$
\end{lemma}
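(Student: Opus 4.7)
The plan is to realize the $2^{nd}$-power map on $\Omega^{2} S^{2n+1}\{2\}$ as the image of a self-map of the Moore space $P^{3}(2)$ under the contravariant functor $\mathrm{Map}_{\ast}(-,S^{2n+1})$, and then to decompose that self-map via the Hopf map $\eta$. The paragraph preceding the statement already installs the identification $\mathrm{Map}_{\ast}(P^{3}(2),S^{2n+1})=\Omega^{2} S^{2n+1}\{2\}$ and translates the cofibration $\nameddright{S^{2}}{i}{P^{3}(2)}{q}{S^{3}}$ into a fibration sequence whose connecting map is the $2^{nd}$-power map; in particular $q^{\ast}$ is identified with $\Omega\partial$ and $i^{\ast}$ with $\Omega\delta$. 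Under this same identification, the $2^{nd}$-power map on $\Omega^{2} S^{2n+1}\{2\}$ is precisely $(\underline{2}_{P^{3}(2)})^{\ast}$, where $\underline{2}_{P^{3}(2)}$ denotes the degree~$2$ self-map of $P^{3}(2)$.

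The core input I would invoke is the classical identity $\underline{2}_{P^{3}(2)}\simeq i\circ\eta\circ q$, where $\eta\colon S^{3}\to S^{2}$ is the Hopf map. This is where the prime $2$ intervenes essentially: at odd primes the analogous degree $p$ self-map of $P^{3}(p)$ is null homotopic, which is why no Hopf-type correction appears in the corresponding odd-primary statement. At the prime~$2$ the identity can be verified by cellular obstruction theory on $P^{3}(2)=S^{2}\cup_{\underline{2}}e^{3}$: the restriction of $\underline{2}_{P^{3}(2)}$ to the bottom cell $S^{2}$ is null homotopic in $P^{3}(2)$ because $i\circ\underline{2}$ agrees with the attaching map of the $3$-cell, so $\underline{2}_{P^{3}(2)}$ factors as $\alpha\circ q$ for some $\alpha\colon S^{3}\to P^{3}(2)$; examination of $\pi_{3}(P^{3}(2))$ via the Bockstein and $\mathrm{Sq}^{2}$ structure then forces $\alpha\simeq i\circ\eta$. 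Alternatively, one could simply cite the standard computation that the identity of $P^{3}(2)$ has order four in the homotopy category, with $2\cdot\mathrm{id}\simeq i\eta q$.

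Finally, applying $\mathrm{Map}_{\ast}(-,S^{2n+1})$ to this homotopy reverses the order of composition and delivers
\[2\simeq q^{\ast}\circ\eta^{\ast}\circ i^{\ast}=\Omega\partial\circ\eta^{\ast}\circ\Omega\delta,\]
which is exactly the diagram of the lemma. The main obstacle is justifying the factorization $\underline{2}_{P^{3}(2)}\simeq i\circ\eta\circ q$; everything else is formal bookkeeping with the cofibration/fibration correspondence already set up above.
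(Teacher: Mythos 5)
Your proposal is correct and follows essentially the same route as the paper: the paper also applies $\mathrm{Map}_{\ast}(-,S^{2n+1})$ to the cofibration \(\nameddright{S^{2}}{i}{P^{3}(2)}{q}{S^{3}}\), identifies the resulting fibration sequence with \(\nameddright{\Omega^{3}S^{2n+1}}{\Omega\partial}{\Omega^{2}S^{2n+1}\{2\}}{\Omega\delta}{\Omega^{2}S^{2n+1}}\), and invokes the classical factorization $\underline{2}_{P^{3}(2)}\simeq i\circ\eta\circ q$. The only difference is that the paper simply cites this factorization as well known, whereas you sketch an obstruction-theoretic justification of it, which is a harmless elaboration rather than a different argument.
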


The third observation is a consequence of the James construction. 
Given a map 
\(f\colon:\namedright{A}{}{\Omega X}\), 
the James construction implies that there is a unique $H$-map 
\(\overline{f}\colon\namedright{\Omega\Sigma A}{}{\Omega X}\) 
such that $\overline{f}\circ E\simeq f$. The uniqueness assertion 
immediately implies the following lemma. 

\begin{lemma} 
   \label{Jamescase} 
   Suppose $A=\Omega A^{\prime}$ and $f$ is an $H$-map. Then there 
   is a homotopy commutative diagram 
   \[\diagram 
        \Omega\Sigma\Omega A^{\prime}\rto^-{\overline{f}}\dto^{\Omega ev} 
             & \Omega X\ddouble \\ 
        \Omega A^{\prime}\rto^-{f} & \Omega X. 
     \enddiagram\] 
   $\qqed$ 
\end{lemma}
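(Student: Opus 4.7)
The plan is to invoke the uniqueness clause in the James construction directly: the stated property of $\overline{f}$ is that it is the unique $H$-map $\Omega\Sigma\Omega A' \to \Omega X$ whose precomposition with $E\colon \Omega A'\to \Omega\Sigma\Omega A'$ is homotopic to $f$. So to prove the lemma it suffices to show that the composite $f\circ\Omega ev$ is an $H$-map that extends $f$ in this sense; uniqueness then forces $\overline{f}\simeq f\circ\Omega ev$.

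First I would check that $f\circ\Omega ev$ is an $H$-map. The map $\Omega ev\colon\Omega\Sigma\Omega A'\to \Omega A'$ is a loop map, hence automatically an $H$-map, and $f$ is an $H$-map by hypothesis, so the composite is an $H$-map.

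Second I would verify that $(f\circ\Omega ev)\circ E\simeq f$. Since composition is associative, this reduces to the assertion that
\[
\Omega ev\circ E\simeq\mathrm{id}_{\Omega A'},
\]
where $E\colon\Omega A'\to \Omega\Sigma\Omega A'$ is the unit of the $\Sigma\dashv\Omega$ adjunction at $\Omega A'$ and $ev\colon\Sigma\Omega A'\to A'$ is the counit at $A'$. This is precisely one of the two triangle identities for the adjunction $\Sigma\dashv\Omega$, so it holds on the nose.

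Combining these two observations, the uniqueness part of the James construction identifies $\overline{f}$ with $f\circ\Omega ev$ up to homotopy, which is exactly the homotopy commutative diagram in the statement. There is no real obstacle here beyond carefully citing the correct form of the uniqueness statement (the James construction exhibits $\Omega\Sigma A$ as the free topological monoid on $A$, so $H$-maps out of $\Omega\Sigma A$ into a loop space are classified up to homotopy by their restrictions along $E$).
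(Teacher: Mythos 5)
Your argument is correct and is essentially the paper's own: the paper likewise deduces the lemma immediately from the uniqueness clause of the James construction, the point being exactly that $f\circ\Omega ev$ is an $H$-map (loop map composed with an $H$-map) whose restriction along $E$ is $f$ by the triangle identity $\Omega ev\circ E\simeq \mathrm{id}$. No gap to report.
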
 

Now we are ready to prove the modified square. 

\begin{lemma} 
   \label{k=1case} 
   There is a homotopy commutative diagram 
   \[\diagram 
        S^{2n-1}\times\Omega^{2} S^{2n+1}
             \rto^-{(E^{2}\circ\underline{2})\cdot 1}\dto^{\pi_{2}} 
          & \Omega^{2} S^{2n+1}\dto^{S} \\ 
        \Omega^{2} S^{2n+1}\rto^-{S} & \Omega^{2} S^{4n+1}\{2\}.   
     \enddiagram\] 
\end{lemma}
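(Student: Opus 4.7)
\emph{Plan of proof.} The plan is to use the James construction to factor $S$ as $\bar{S}\circ E$ with $\bar{S}$ an $H$-map, compare two natural lifts of $(E^{2}\circ\underline{2})\cdot 1$ through the evaluation map $\Omega ev$, and then reduce the remaining obstruction to a Hopf-type computation that is killed by the $\eta^{\ast}$-factorization of the $2^{\mathit{nd}}$-power map on the target (Lemma~\ref{curly2power}).

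Concretely, by the construction in Section~\ref{sec:projplane}, $S=\bar{S}\circ E$ where $E\colon\Omega^{2} S^{2n+1}\to\Omega\Sigma\Omega^{2} S^{2n+1}$ is the James unit and $\bar{S}=\Omega(f\circ a)\colon\Omega\Sigma\Omega^{2} S^{2n+1}\to\Omega^{2} S^{4n+1}\{2\}$ is a loop map, hence an $H$-map for the James multiplication~$\cdot_{J}$. Define two maps $S^{2n-1}\times\Omega^{2} S^{2n+1}\to\Omega\Sigma\Omega^{2} S^{2n+1}$:
\[
L_{A}=E\circ\bigl((E^{2}\circ\underline{2})\cdot 1\bigr),\qquad
L_{B}=(E\circ E^{2}\circ\underline{2}\circ\pi_{1})\cdot_{J}(E\circ\pi_{2}).
\]
Both satisfy $\Omega ev\circ L_{A}\simeq\Omega ev\circ L_{B}\simeq(E^{2}\circ\underline{2})\cdot 1$, using $\Omega ev\circ E\simeq 1$ and that $\Omega ev$ is an $H$-map. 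By definition, $\bar{S}\circ L_{A}=S\circ((E^{2}\circ\underline{2})\cdot 1)$ is the left-hand side of the claimed identity.

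Since $\bar{S}$ is an $H$-map, $\bar{S}\circ L_{B}\simeq(S\circ E^{2}\circ\underline{2}\circ\pi_{1})\cdot(S\circ\pi_{2})$. The long exact sequence of the fibration $\Omega^{2} S^{4n+1}\{2\}\to\Omega^{2} S^{4n+1}\stackrel{2}{\to}\Omega^{2} S^{4n+1}$ shows $\pi_{2n-1}(\Omega^{2} S^{4n+1}\{2\})=0$ because $\pi_{2n+1}(S^{4n+1})$ and $\pi_{2n+2}(S^{4n+1})$ both vanish for $n\geq 1$; hence $S\circ E^{2}\circ\underline{2}\simeq\ast$ and $\bar{S}\circ L_{B}\simeq S\circ\pi_{2}$, the right-hand side. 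So it suffices to show $\bar{S}\circ L_{A}\simeq\bar{S}\circ L_{B}$.

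Since $L_{A}$ and $L_{B}$ project to the same map under $\Omega ev$, their difference (taken using the $H$-space structure on $\Omega\Sigma\Omega^{2} S^{2n+1}$) lifts to the fiber of $\Omega ev$, which by Ganea's fibration is $\Omega(\Sigma\Omega^{2} S^{2n+1}\wedge\Omega^{2} S^{2n+1})$ with inclusion $\Omega\mu^{\ast}$. Thus the obstruction reduces to showing $\bar{S}\circ\Omega\mu^{\ast}\circ\theta\simeq\ast$ for the explicit comparison map $\theta\colon S^{2n-1}\times\Omega^{2} S^{2n+1}\to\Omega(\Sigma\Omega^{2} S^{2n+1}\wedge\Omega^{2} S^{2n+1})$. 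The main obstacle is to evaluate this composite. Using the adjoint/Peterson--Stein analysis of Section~\ref{sec:projplane} (diagrams~(\ref{adjointdgrm}), (\ref{Snevdgrm2}) and (\ref{Snevdgrm3})), the composite $\bar{S}\circ\Omega\mu^{\ast}$ factors through an $\Omega E^{2}$-type map, so that precomposing with the degree-$2$ factor carried by $\underline{2}\colon S^{2n-1}\to S^{2n-1}$ produces a $2$-divisible element of $[\,\cdot\,,\Omega^{2} S^{4n+1}\{2\}\,]$. Lemma~\ref{curly2power} identifies this $2$-divisible contribution with $\Omega\partial\circ\eta^{\ast}\circ\Omega\delta$ applied to the relevant map, and Step~2 combined with $\Omega H\circ E^{2}\simeq\ast$ forces the $\Omega\delta$-image to vanish. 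Hence the composite is null, completing the comparison.
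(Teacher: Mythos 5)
Your overall reduction is sound and runs parallel to the paper's: writing $S=\bar{S}\circ E$ with $\bar{S}=\Omega\widetilde{S}$ an $H$-map, splitting off the two ``easy'' pieces (the restriction to $S^{2n-1}$ dies by connectivity, the restriction to $\Omega^{2}S^{2n+1}$ gives $S\circ\pi_{2}$), and reducing everything to the vanishing of a cross-term obstruction. But the final step, which is the entire content of the lemma, has a genuine gap. You assert that $\bar{S}\circ\Omega\mu^{\ast}$ ``factors through an $\Omega E^{2}$-type map'' by appealing to diagrams~(\ref{adjointdgrm}), (\ref{Snevdgrm2}), (\ref{Snevdgrm3}); those diagrams concern the sphere-level composite $\Omega g\circ\Omega a$ landing in $\Omega^{3}S^{2n+1}$ (the case $X=S^{n}$ of the Peterson--Stein analysis), not the composite $f\circ a$ for $X=\Omega S^{2n+1}$ whose loop is your $\bar{S}$, so no such factorization is available from them. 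What the paper actually uses at this point is different: after the degree~$2$ is converted into the $2^{nd}$-power map on $\Omega^{2}S^{4n+1}\{2\}$ and Lemma~\ref{curly2power} replaces that power map by $\Omega\partial\circ\eta^{\ast}\circ\Omega\delta$, one must show $\Omega\delta\circ\Omega\widetilde{S}\circ\Omega\mu^{\ast}\simeq\ast$. This is done via the uniqueness clause of the James construction (Lemma~\ref{Jamescase}): $\Omega\delta\circ\Omega\widetilde{S}$ is an $H$-map restricting along $E$ to $\Omega H$, hence is homotopic to $\Omega H\circ\Omega ev$, and then Ganea's relation $ev\circ\mu^{\ast}\simeq\ast$ kills the composite. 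Your substitute, ``Step~2 combined with $\Omega H\circ E^{2}\simeq\ast$ forces the $\Omega\delta$-image to vanish,'' does not do this: the obstruction is supported on the cross-term $S^{2n-1}\wedge\Omega^{2}S^{2n+1}$ (equivalently on the fiber of $\Omega ev$), not on the bottom-cell factor, so a statement about $\Omega H\circ E^{2}$ says nothing about it. Indeed the homology computation around~(\ref{majorobstr}) shows the cross-term composite is essential before the degree~$2$ is introduced, so some identification of $\Omega\delta\circ\bar{S}$ on the image of $\Omega\mu^{\ast}$ is unavoidable.

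There is a second, smaller gap in how you extract the factor of $2$. In your setup the degree~$2$ map is buried inside $L_{A}$, and the comparison map $\theta$ is merely \emph{some} lift of the difference $L_{A}\cdot L_{B}^{-1}$ through $\Omega\mu^{\ast}$; lifts are neither unique nor automatically natural in the $\underline{2}$ variable, so the claim that precomposition with $\underline{2}$ on the $S^{2n-1}$ factor ``produces a $2$-divisible element'' of $[\,\cdot\,,\Omega^{2}S^{4n+1}\{2\}]$ needs an argument. The paper sidesteps this by suspending and using the splitting $\Sigma(A\times B)\simeq\Sigma A\vee\Sigma B\vee(\Sigma A\wedge B)$: on the smash summand the map is literally precomposed with $\Sigma\underline{2}\wedge 1$, the degree~$2$ map on a suspension, which then commutes past everything to become the $2^{nd}$-power map on the loop-space target. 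To repair your proof you should either adopt that suspension splitting, or make $\theta$ explicit enough that the $\underline{2}$ can be pulled out, and in either case you must add the James-uniqueness identification $\Omega\delta\circ\Omega\widetilde{S}\simeq\Omega H\circ\Omega ev$ together with $ev\circ\mu^{\ast}\simeq\ast$ to finish.
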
 

\begin{proof} 
The adjoint of the upper direction around the diagram is homotopic 
to the composite 
\(\psi\colon\lllnameddright{\Sigma(S^{2n-1}\times\Omega^{2} S^{2n+1})} 
    {\Sigma(E^{2}\circ\underline{2})\cdot 1} 
    {\Sigma\Omega^{2} S^{2n+1}}{\widetilde{S}}{\Omega S^{4n+1}\{2\}}\) 
where $\widetilde{S}$ is the adjoint of $S$. Precomposing with the 
homotopy equivalence  
\(e\colon\namedright{\Sigma S^{2n-1}\vee\Sigma\Omega^{2} S^{2n+1} 
    \vee(\Sigma S^{2n-1}\wedge\Omega^{2} S^{2n+1})}{\simeq} 
    {\Sigma(S^{2n-1}\times\Omega^{2} S^{2n+1})}\), 
the assertion of the lemma is therefore equivalent to saying that 
the restriction of $\psi\circ e$ to the first and third wedge summands 
is null homotopic. As the restriction to the first wedge summand is 
null homotopic by connectivity, it remains only to show that $\psi\circ j$ 
is null homotopic. 

Consider the map $\Sigma((E^{2}\circ\underline{2})\cdot 1)$. 
By definition, 
$(E^{2}\circ\underline{2})\cdot 1\simeq 
   \mu\circ ((E^{2}\circ\underline{2})\times 1)$, 
where $\mu$ is the loop space multiplication on $\Omega^{2} S^{4n+1}\{2\}$.  
The naturality of $j$ therefore implies that $\psi\circ j$ is homotopic to 
$\widetilde{S}\circ\Sigma\mu\circ j\circ 
    (\Sigma (E^{2}\circ\underline{2})\wedge 1)$, 
that is, to 
$\widetilde{S}\circ\mu^{\ast}\circ 
    (\Sigma (E^{2}\circ\underline{2})\wedge 1)$. 
Now 
$\Sigma (E^{2}\circ\underline{2})\wedge 1\simeq  
    (\Sigma E^{2}\wedge 1)\circ (\Sigma\underline{2}\wedge 1)$, 
and $\Sigma\underline{2}\wedge 1$ is homotopic to the map of degree $2$ 
on $\Sigma S^{2n-1}\wedge\Omega^{2} S^{2n+1}$. Since this is the suspension 
of the degree $2$ map on $S^{2n-1}\wedge\Omega^{2} S^{2n+1}$, 
the adjoint of $\psi\circ j$ is homotopic to the composite 
\(\nameddright{S^{2n-1}\wedge\Omega^{2} S^{2n+1}}{\underline{2}} 
     {S^{2n-1}\wedge\Omega^{2} S^{2n+1}}{\psi^{\prime}} 
     {\Omega^{2} S^{4n+1}\{2\}}\) 
where $\psi^{\prime}$ is the adjoint of 
$\widetilde{S}\circ\mu^{\ast}\circ(\Sigma E^{2}\wedge 1)$. As 
$\Omega^{2} S^{4n+1}\{2\}$ is a loop space, 
$\psi^{\prime}\circ\underline{2}$ is homotopic to the composite 
\(\nameddright{S^{2n-1}\wedge\Omega^{2} S^{2n+1}}{\psi^{\prime}} 
    {\Omega^{2} S^{4n+1}\{2\}}{2}{\Omega^{2} S^{4n+1}\{2\}}\). 
 
By Lemma~\ref{curly2power}, the $2^{nd}$-power map on 
$\Omega^{2} S^{4n+1}\{4\}$ is homotopic to the composite 
\(\namedddright{\Omega^{2} S^{4n+1}\{2\}}{\Omega\delta}{\Omega^{2} S^{4n+1}} 
     {\eta^{\ast}}{\Omega^{3} S^{4n+1}}{}{\Omega^{2} S^{4n+1}\{2\}}\). 
Thus $2\circ\psi^{\prime}$ factors through 
$\Omega\delta\circ\psi^{\prime}$. Since $\psi^{\prime}$ is the adjoint 
of $\psi$, it is homotopic to the composite $\Omega\psi\circ E$, which 
by the definition of $\psi$, is the composite 
$\Omega\widetilde{S}\circ\Omega\mu^{\ast}\circ 
   \Omega(\Sigma E^{2}\wedge 1)\circ E$. 
We claim that $\Omega\delta\circ\Omega\widetilde{S}\circ\Omega\mu^{\ast}$ 
is null homotopic, in which case $\Omega\delta\circ\psi^{\prime}$ is 
null homotopic and we are done. To see this, observe that the composite 
\(\nameddright{\Omega^{2} S^{2n+1}}{E}{\Omega\Sigma\Omega^{2} S^{2n+1}} 
     {\Omega\widetilde{S}}{\Omega^{2} S^{4n+1}\{2\}}\) 
is homotopic to $S$ since $\widetilde{S}$ is the adjoint of $S$. Thus 
$\Omega\delta\circ\Omega\widetilde{S}$ is an $H$-map such that 
$(\Omega\delta\circ\Omega\widetilde{S})\circ E\simeq\Omega H$. 
By Lemma~\ref{Jamescase}, this implies that there is a homotopy 
commutative square 
\[\diagram 
     \Omega\Sigma\Omega^{2} S^{2n+1}
            \rto^-{\Omega\widetilde{S}}\dto^{\Omega ev} 
       & \Omega^{2} S^{4n+1}\{2\}\dto^{\Omega\delta} \\ 
     \Omega^{2} S^{2n+1}\rto^-{\Omega H} & \Omega^{2} S^{4n+1}. 
  \enddiagram\] 
Thus there is a string of homotopies   
\[\Omega\delta\circ\Omega\widetilde{S}\circ\Omega\mu^{\ast}\simeq 
       \Omega H\circ\Omega ev\circ\Omega\mu^{\ast}\simeq 
       \Omega H\circ\ast\simeq\ast,\] 
where Ganea's fibration has been used to show that 
$ev\circ\mu^{\ast}\simeq\ast$. This completes the proof. 
\end{proof} 

With the stumbling blocks addressed, we move on to prove  
Propositions~\ref{Ebarextension} and~\ref{Xextension}.  

\noindent 
\textit{Proof of Proposition~\ref{Ebarextension}}: 
We follow the proof of the odd primary analogue in~\cite[2.1]{T}. 
The idea is to filter $\overline{E}$ by certain homotopy pushouts 
and construct extensions iteratively as pushout maps. The case of $E$ 
is carried along to indicate differences, and where modifications 
arise. 

The mod-$2$ homology Serre spectral sequence for the homotopy fibration 
\(\nameddright{\Omega S^{2n+1}}{}{F}{}{P^{2n+1}(p^{r})}\) 
shows that $\rhlgy{F;\zmodtwo}$ is generated as a vector space by 
elements $x_{2nk}$ in degrees $2nk$ for $k\geq 1$. Let $F_{k}$   
be the $2nk$-skeleton of $F$ and observe that there is a homotopy 
cofibration 
\(\nameddright{S^{2nk-1}}{g_{k-1}}{F_{k-1}}{}{F_{k}}\). 
Define corresponding filtrations on $E$ and $\overline{E}$ by homotopy 
pullback diagrams   
\begin{equation} 
  \label{EkFk} 
  \diagram 
      E_{k-1}\rto\dto & E\dto 
         & & \overline{E}_{k-1}\rto\dto & \overline{E}\dto \\ 
      F_{k-1}\rto\dto & F\dto^{\imath} 
         & & F_{k-1}\rto\dto & F\dto^{\overline{\imath}} \\ 
      \Omega S^{2n+1}\rdouble & \Omega S^{2n+1} 
         & & \Omega S^{2n+1}\rdouble & \Omega S^{2n+1}. 
  \enddiagram 
\end{equation} 
Note that $F_{0}=\ast$ so $E_{0}\simeq\Omega^{2} S^{2n+1}$ and 
$\overline{E}_{0}\simeq\Omega^{2} S^{2n+1}$. As $F=\varinjlim F_{k-1}$ 
we have $E=\varinjlim E_{k-1}$ and 
$\overline{E}=\varinjlim\overline{E}_{k-1}$. 

The filtration of $E$ implies by~\cite[4.2]{T} or~\cite[2.2]{GT} that 
for $k\geq 1$ there are homotopy pushouts 
\[\diagram 
      S^{2nk-1}\times\Omega^{2} S^{2n+1}
              \rto^-{\theta_{k-1}}\dto^{\pi_{2}} 
         & E_{k-1}\dto \\ 
      \Omega^{2} S^{2n+1}\rto & E_{k}  
  \enddiagram\] 
where $\pi_{2}$ is the projection, $\theta_{0}\simeq E^{2}\cdot 1$, 
and for $k>1$, $\theta_{k-1}$ restricted to $S^{2nk-1}$ is a lift of 
the map 
\(\namedright{S^{2nk-1}}{g_{k-1}}{F_{k-1}}\). 
By Lemma~\ref{2rSamelson}, the argument of~\cite[5.1]{T} goes through 
without change for $p=2$ and $r>1$. The outcome is that the lift of $g_{k-1}$ 
can be chosen to be divisible by $2^{r}$, and this is used to show 
that $\theta_{k-1}$ factors as a composite 
\(\lnameddright{S^{2nk-1}\times\Omega^{2} S^{2n+1}} 
      {\underline{2}^{r}\times 1}{S^{2nk-1}\times\Omega^{2} S^{2n+1}} 
      {\psi_{k-1}}{\overline{E}_{k-1}}\) 
for some map $\psi_{k-1}$. We would like to extend the map  
\(\namedright{\Omega^{2} S^{2n+1}}{S}{\Omega^{2} S^{4n+1}\{2\}}\) 
across each $E_{k}$ by a pushout map, but the sticking point 
is $\theta_{0}$, as~(\ref{majorobstr}) implies that it is not 
possible to extend $S$ across $E_{1}$.  

Instead, we replace $E$ by $\overline{E}$. Now the filtration 
of $\overline{E}$ implies by~\cite[4.2]{T} or~\cite[2.2]{GT} that 
for $k\geq 1$ there are homotopy pushouts 
\[\diagram 
      S^{2nk-1}\times\Omega^{2} S^{2n+1}
              \rto^-{\overline{\theta}_{k-1}}\dto^{\pi_{2}} 
         & \overline{E}_{k-1}\dto \\ 
      \Omega^{2} S^{2n+1}\rto & \overline{E}_{k}  
  \enddiagram\] 
where $\pi_{2}$ is the projection, 
$\overline{\theta}_{0}\simeq(E^{2}\circ\underline{2})\cdot 1$,  
and for $k>1$, $\overline{\theta}_{k-1}$ restricted to $S^{2nk-1}$ is a 
lift of the map 
\(\namedright{S^{2nk-1}}{g_{k-1}}{F_{k-1}}\). 
The presence of $\underline{2}$ in $\overline{\theta}_{0}$ is due 
to $\overline{\imath}$ being of degree~$2$ on the bottom cell, 
whereas $\imath$ is degree one. Also, since the map  
\(\namedright{E}{}{F}\) 
factors as the composite 
\(\nameddright{E}{}{\overline{E}}{}{F}\), 
the same holds true at each filtration level, so the lift of $g_{k-1}$  
to $\overline{E}_{k-1}$ can be chosen to be the composite 
\(\nameddright{S^{2nk-1}}{}{E_{k-1}}{}{\overline{E}_{k-1}}\), 
where the left map is divisible by $2^{r}$. This can then be 
used to show argue as before that for $k>1$, 
$\overline{\theta}_{k-1}$ factors as a composite 
\(\lnameddright{S^{2nk-1}\times\Omega^{2} S^{2n+1}} 
      {\underline{2}^{r}\times 1}{S^{2nk-1}\times\Omega^{2} S^{2n+1}} 
      {\overline{\psi}_{k-1}}{\overline{E}_{k-1}}\) 
for some map $\overline{\psi}_{k-1}$. 

When $k=1$, Lemma~\ref{k=1case} implies that there is a pushout map 
\(\epsilon_{1}\colon\namedright{\overline{E}_{1}}{} 
    {\Omega^{2} S^{4n+1}\{2\}}\) 
which extends $S$. When $k>1$, the factorization 
of $\overline{\theta}_{k-1}$ through $\underline{2}^{r}\times 1$ 
where $r\geq 2$, together with the fact that $\Omega^{2} S^{4n+1}\{2\}$ 
is a homotopy associative $H$-space whose $4^{th}$-power map 
is null homotopic~\cite{N}, lets us apply~\cite[4.3]{T} or~\cite[2.3]{GT} 
to show that for each $k>1$ there is a pushout map  
\(\epsilon_{k}\colon\namedright{\overline{E}_{k}}{}  
    {\Omega^{2} S^{4n+1}\{2\}}\) 
which extends $\epsilon_{k-1}$. In the limit we therefore obtain a map 
\(\namedright{\overline{E}}{e_{1}}{\Omega^{2} S^{4n+1}\{2\}}\) 
which extends $S$, as asserted. 
$\qqed$ 
\medskip 

\noindent 
\textit{Proof of Proposition~\ref{Xextension}}: 
This follows the proof of the odd primary analogue in~\cite[2.2]{T}. 
Filter $X$ by homotopy pullbacks 
\[\diagram 
     Y\rdouble\dto & Y\dto \\ 
     X_{k-1}\rto\dto & X\dto^-{q_{X}} \\ 
     \jk\rto & \Omega S^{2n+1} 
  \enddiagram\] 
where \jkm\ is the $(2nk-1)$-skeleton of $\Omega S^{2n+1}$, 
and note that there is a homotopy cofibration 
\(\nameddright{S^{2nk-1}}{}{\jkm}{}{\jk}\). 
Note that $J_{0}(S^{2n})\simeq\ast$ so $X_{0}\simeq Y$, and as 
$\Omega S^{2n+1}=\varinjlim \jk$, we have $X=\varinjlim X_{k}$.  
Since~\cite[6.1]{T} holds without change for $p=2$ and $r>1$, the argument  
in~\cite[7.3]{T} goes through to show that there are homotopy pushouts 
\[\diagram 
     S^{2nk-1}\times Y\rto^-{\theta^{\prime}_{k-1}}\dto^{\pi_{2}}  
        & X_{k-1}\dto \\ 
     Y\rto & X_{k} 
  \enddiagram\] 
where $\pi_{2}$ is the projection and $\Sigma\theta^{\prime}_{k-1}$ 
restricted to $\Sigma(S^{2nk-1}\rtimes\Omega^{2} S^{2n+1})$ 
is divisible by $2^{r}$ for each $k\geq 1$. Now start with the map 
\(\epsilon^{\prime}_{0}\colon\namedright{X_{0}=Y}{g}{\Omega W_{2n}}\). 
The divisibility property of $\Sigma\theta^{\prime}_{k-1}$, 
together with the fact from Theorem~\ref{Wnpower} that $\Omega W_{2n}$ 
is a homotopy associative $H$-space whose $4^{th}$-power map 
is null homotopic, lets us apply~\cite[4.3]{T} or~\cite[2.3]{GT} 
to show that for each $k\geq 1$ there is a pushout map 
\(\epsilon^{\prime}_{k}\colon\namedright{X_{k}}{}{\Omega W_{2n}}\) 
which extends $\epsilon^{\prime}_{k-1}$, and therefore extends $g$, 
as asserted.  
$\qqed$

\bibliographystyle{amsalpha}

\end{document}